\newtheorem{thm}{Theorem}[section] 
\newtheorem*{thm*}{Theorem} 
\newtheorem{prop}[thm]{Proposition}
\newtheorem{lem}[thm]{Lemma}
\newtheorem{cor}[thm]{Corollary}
\theoremstyle{definition}
\newtheorem{definition}[thm]{Definition}
\newtheorem{expl}[thm]{Example}
\newtheorem{question}[thm]{Question}
\newtheorem{rem}[thm]{Remark}
\DeclareMathOperator{\C}{\mathbb{C}}
\DeclareMathOperator{\Z}{\mathbb{Z}}
\DeclareMathOperator{\N}{\mathbb{N}}
\DeclareMathOperator{\F}{\mathbb{F}}
    \DeclareFontFamily{U}{wncy}{}
    \DeclareFontShape{U}{wncy}{m}{n}{<->wncyr10}{}
    \DeclareSymbolFont{mcy}{U}{wncy}{m}{n}
    \DeclareMathSymbol{\Sha}{\mathord}{mcy}{"58}
\numberwithin{equation}{section}
\renewcommand{\i}{\mathrm{i}}
\DeclareSymbolFont{bbold}{U}{bbold}{m}{n}
\DeclareSymbolFontAlphabet{\mathbbold}{bbold}
\begin{document}
\keywords{Cayley graphs, homogeneous sets, induced subgraphs, finite commutative rings.}
\subjclass[2020]{Primary 05C25, 05C50, 05C51}
\title{On certain properties of the \\ $p$-unitary Cayley graph  over a finite ring}
\date{}
\dedicatory{Dedicated to Professor J\'an Min\'a\v{c} on the occasion of his 71th birthday }
 \author{Tung T. Nguyen, Nguy$\tilde{\text{\^{e}}}$n Duy T\^{a}n }

 \address{Department of Kinesiology, Western University, London, Ontario, Canada N6A 5B7}
 \email{tungnt@uchicago.edu}
 
  \address{
 Falculty of   Mathematics and 	Informatics, Hanoi University of Science and Technology, 1 Dai Co Viet Road, Hanoi, Vietnam } 
\email{tan.nguyenduy@hust.edu.vn}
 
\maketitle
\begin{abstract}
    In recent work \cite{chudnovsky2024prime}, we study certain Cayley graphs associated with a finite commutative ring and their multiplicative subgroups. Among various results that we prove, we provide the necessary and sufficient conditions for such a Cayley graph to be prime. In this paper, we continue this line of research. Specifically, we investigate some basic properties of certain $p$-unitary Cayeley graphs associated with a finite commutative ring. In particular, under some mild conditions, we provide the necessary and sufficient conditions for this graph to be prime. 
\end{abstract}

\section{Introduction}
Let $G$ be an undirected graph. A homogeneous set in  $G$ is a set $X$ of vertices of $G$ such that every vertex in $V(G) \setminus X$ is adjacent to either all or none of the vertices in $X$. A homogenous set $X$ is said to be non-trivial if $2 \leq X < |V(G)|$. As explained in  \cite[Section 1.1]{chudnovsky2024prime}, the existence of non-trivial homogeneous sets allows us to decompose $G$ as a joined union of smaller graphs. Such a decomposition is important for many problems in network theory and dynamical systems on them (see \cite{boccaletti2014structure, jain2023composed, kivela2014multilayer, nguyen2022broadcasting}). In \cite[Section 4]{chudnovsky2024prime}, we study the prime property of various Cayley graphs associated with a finite commutative ring. There, we provide necessary and sufficient conditions for the existence of homogeneous sets in those Cayley graphs under some mild conditions (see \cite[Theorem 4.1]{chudnovsky2024prime}). In this paper, we continue and expand this line of research to some other directions. Specifically, we will investigate some basic properties of the  $p$-unitary Cayley graphs whose definition we will explain below.

Let $R$ be a finite commutative ring and $p$ a natural number.  Let $S = (R^{\times})^p$ be the set of all invertible $p$-th powers in $R$. Since we only deal with undirected graphs in this paper; we will assume that $-1 \in (R^{\times})^p$. 
\begin{definition}(See  \cite[Section 1.2]{podesta2023waring})
The $p$-unitary Cayley graph $G_R(p)$ is the undirected graph with the following data 

\begin{enumerate}
    \item The vertex set $V(G_R(p))$ of $G_R(p)$ is $R.$ 
    \item Two vertices $a,b \in V(G_R(p))$ are connected if and only $a-b \in (R^{\times})^p.$
\end{enumerate}
\end{definition}
\begin{rem}
    Because we want to deal with one prime at a time, we will assume that $p$ is a prime number throughout the rest of this article.
\end{rem}

\begin{expl}
The Cayley graph $X_{3, \F_{13}}$ is described by \cref{fig:p_13_3}. It is a connected, regular graph of degree $4.$
\begin{figure}[H]
\centering
\includegraphics[width=0.4 \linewidth]{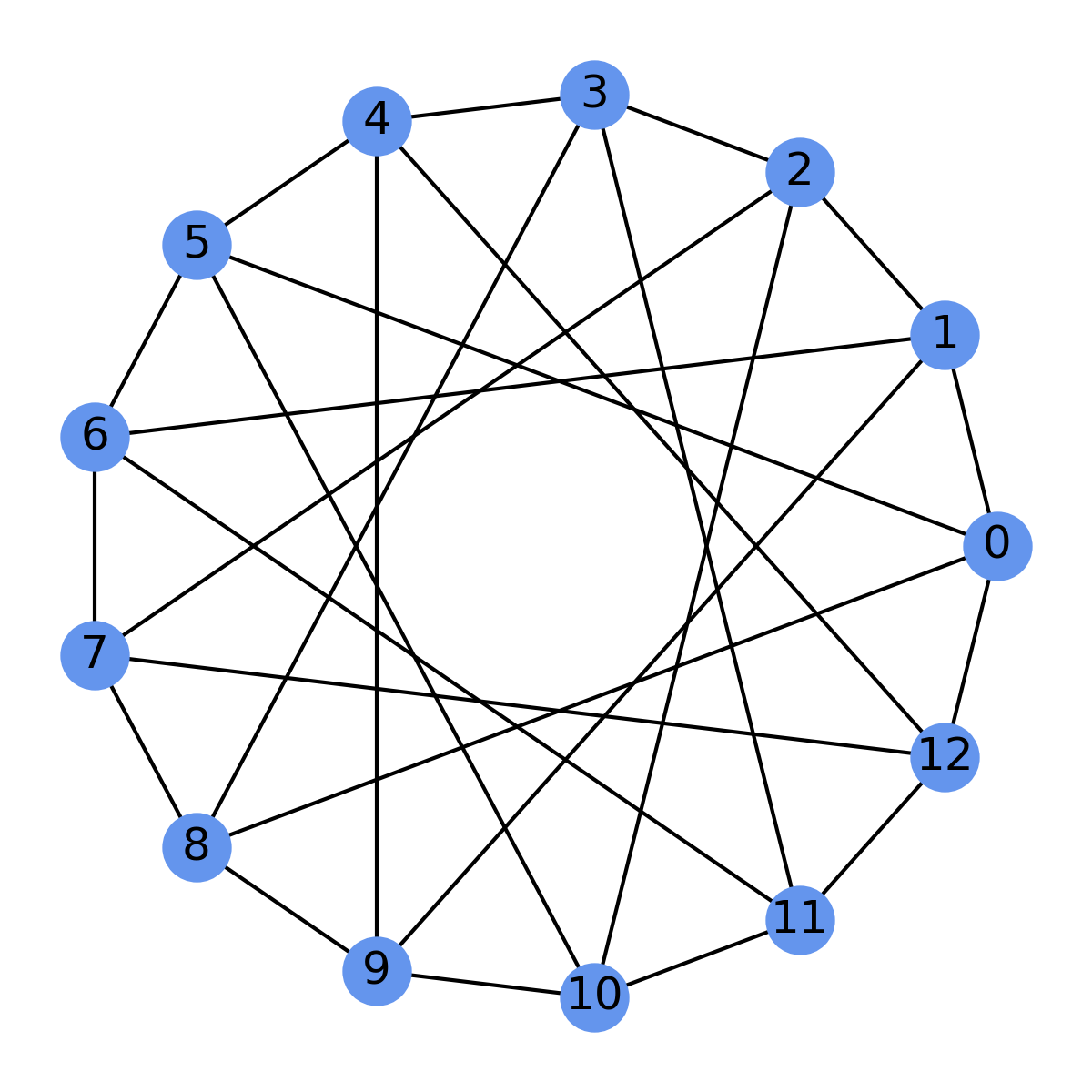}
\caption{The Cayley graph $G_{\F_{13}}(3)$}
\label{fig:p_13_3}
\end{figure}
\end{expl}

The case $p=1$, where $G_{R}(1)$ is often referred to as the unitary Cayley graph associated with $R$, is studied extensively in the literature (see for example \cite{unitary, bavsic2015polynomials, chudnovsky2024prime, klotz2007some, su2016diameter}). When $R$ is a finite field of characteristics $\ell \neq p$, $G_R(p)$ is called a generalized Paley graph. These generalized Paley graphs have interesting arithmetic and spectral properties and they have found various applications in coding and cryptography theory (see \cite{ghinelli2011codes, javelle2014cryptographie, podesta2019spectral, podesta2023waring}).

In \cite{chudnovsky2024prime}, amongst various results that we discover, we are able to classify all prime unitary Cayley graphs (see \cite[Theorem 4.35]{chudnovsky2024prime}). In light of this theorem, the following question seems to be quite natural. 

\begin{question} \label{question:prime}
    When is $G_R(p)$ a prime graph?
\end{question}
By definition, a connected component of $G$ (or of its complement $G^c$) is necessarily a homogeneous set. Consequently, if $G_R(p)$ is prime, then it must be connected and anticonnected unless $|V(G)|=2$ (recall that a graph is called anticonnected if $G^c$ is connected). As a result, a closely related question about $G_R(p)$ is the following. 

\begin{question} \label{question:connected}
    When is $G_R(p)$ connected and anticonnected?
\end{question}
By  \cite[Corollary 2.5]{podesta2023waring}, we know the complete answer for \cref{question:connected} when $R$ is a local ring such that $p$ is invertible in $R.$ The cases where either $p$ is not invertible in $R$ or $R$ is not a local ring are more complicated and we will deal with them in this work. 

\subsection{Outline}
In \cref{sec:background}, we recall some backgrounds in graph theory as well as some relevant results in our previous work \cite{chudnovsky2024prime}. To proceed further, we remark that by the structure theorem, $R = R_1 \times R_2 \times \ldots \times R_d$ where $R_i$'s are finite local rings. We then see that  $G_R(p)$ is the tensor product of $G_{R_i}(p)$ (see \cref{def:tensor_product} for the definition of the tensor product of graphs). More specifically
$$ G_R(p) \cong \prod_{i=1}^d G_{R_i}(p).$$
From this tensor product decomposition, it seems natural to first study the case where $R$ is a finite local ring. In this case, as expected, the behavior of $G_R(p)$ depends on whether $p$ is invertible on $R$. In \cref{sec:local_ring_l_p}, we discuss the case where $R$ is local and $p$ is invertible in $R$. Using the results in \cite{podesta2021_finitefield}, we are able to provide a complete answer to \cref{question:prime} (see \cref{thm:prime_l_p}). In this section, we also study induced subgraphs of $G_R(p)$ which we need later for the general case. However, this topic could be of independent interest. \cref{sec:local_ring_p_p} deals with the case where $R$ is local but $p$ is not invertible in $R.$ In this case, using some rather involved ring-theoretic arguments, we are also able to give a complete answer to \cref{question:prime} (see \cref{thm:prime_local_p_p}). Additionally, we also investigate some induced subgraphs of $G_R(p)$ in this case. As a by-product, we introduce some auxiliary polynomials $f_p, g_p$ that possess some interesting arithmetic properties.  Finally in \cref{sec:general_case}, we study \cref{question:prime} and \cref{question:connected} in the general case. Here, we provide the necessary and sufficient conditions for $G_R(p)$ to be prime (see \cref{thm:prime_general_case}). We also discuss some special cases where we can verify these conditions directly.  

\section{Backgrounds and previous work} \label{sec:background}
In this section, we discuss some fundamental concepts in graph theory. We also recall some results in \cite{chudnovsky2024prime} that we will use throughout this article. 
\begin{definition}[Induced subgraph]
  Let $G$ be a graph and $H \subseteq V(G )$ a non-empty subset. The induced subgraph $\Gamma[H]$ on $H$ is the subgraph of $\Gamma$ with the following data 
  \begin{enumerate}
      \item $V(G[H]) = H$,
      \item $E(G[H]) = \{ (x, y) \in E(G) | x, y \in H\}$.
      \end{enumerate}
\end{definition}
\begin{definition}[Tensor product of graphs] \label{def:tensor_product}
    Let $G,H$ be two graphs. The tensor product $G\times H$ of $G$ and $H$ (also known as the direct product) is the graph with the following data:
    \begin{enumerate}
    \item The vertex set of $G \times H$ is the Cartesian product $V(G) \times V(H)$, 
    \item Two vertices $(g,h)$ and $(g',h')$ are connected in $G \times H$ if and only if $(g,g') \in E(G)$ and $(h, h') \in E(H).$
\end{enumerate}
\end{definition}

\begin{definition}[Wreath product] \label{def:wreath_product}
  Let $\Gamma,\Delta$ be two graphs. We define the wreath product  of $\Gamma $ and $ \Delta$ as the graph $\Gamma \cdot \Delta$  with the following data 
  \begin{enumerate}
      \item The vertex set of $\Gamma \cdot \Delta$ is the Cartesian product $V(\Gamma ) \times V(\Delta )$,
      \item $(x,y)$ and $(x',y')$ are connected in $\Gamma \cdot \Delta$ if either $(x,x') \in E(\Gamma)$ or $x=x'$ and $(y,y') \in E(\Delta)$. 
  \end{enumerate}
\end{definition}

\begin{definition}[The complete graph $K_n$]
$K_n$ is the graph on $n$ vertices which are pairwisely connected. 

\end{definition}
We also recall the definition of certain Cayley graphs associated with a ring as discussed in \cite[Section 4]{chudnovsky2024prime}.
\begin{definition}
    Let $R$ be a commutative ring, and $S$ a subgroup of the set $R^{\times}$ of units of $R$ such that $-1 \in S$. We denote ${\rm Cay}(R, S)$ the Cayley graph $\text{Cay}((R,+), S)$. To be more specific, the vertex set of ${\rm Cay}(R,S)$ is $R$ and two vertices $a, b \in R$ are connected if and only if $a-b \in S.$
    \end{definition}

To study the prime property of ${\rm Cay}(R,S)$, we will make use of the following result which was first discovered in \cite{chudnovsky2024prime}. 

\begin{prop} (see \cite[Theorem 4.1]{chudnovsky2024prime} and \cite[Proposition 4.7]{chudnovsky2024prime}) \label{prop:prime_old}
    Suppose that ${\rm Cay}(R, S)$ is connected and anti-connected. If ${\rm Cay}(R,S)$ is not prime, then there exists a non-trivial ideal $I$ such that $I$ is a homogeneous set. Furthermore, $I$ is a subset of the Jacobson radical of $R.$
\end{prop}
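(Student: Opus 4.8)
The plan is to improve an arbitrary nontrivial homogeneous set of $\Gamma:=\mathrm{Cay}(R,S)$ in three stages: first to a homogeneous set that is an additive subgroup, then to one that is an ideal, and finally to show that any such ideal lies in $J(R)$.

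\textbf{Stage 1: a homogeneous subgroup.} Since $\Gamma$ is vertex-transitive and, by hypothesis, connected and anticonnected, the root of its modular decomposition tree is a prime node, and the children of that root — the maximal strong homogeneous sets — partition $V(\Gamma)=R$. If $\Gamma$ is not prime these parts are not all singletons, and since $\mathrm{Aut}(\Gamma)$ acts transitively on $R$ and permutes these parts, all of them have a common size $t$ with $2\le t<|R|$ and the partition is translation-invariant. The part $H$ containing $0$ then satisfies $H+g=(\text{the part containing }g)$ for all $g\in R$, which forces $\mathrm{Stab}(H)=H$; thus $H$ is a nontrivial homogeneous set that is an additive subgroup. (One can also avoid modular decomposition: take a least-size nontrivial homogeneous set $M$ through $0$; applying the standard overlap lemma for homogeneous sets to $M$ and its translates $M+x$ shows $M$ is a subgroup unless $|M|=2$, and in that last case $M=\{0,a\}$ says $0$ and $a$ are twins, which — after replacing $\Gamma$ by $\Gamma^{c}$ if $\{0,a\}$ is an edge — gives $a\in\mathrm{Stab}(S)$, so that $\mathrm{Stab}(S)$ is the sought homogeneous subgroup.)

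\textbf{Stage 2: a homogeneous ideal.} The additive subgroups of $R$ that are homogeneous sets of $\Gamma$ are closed under intersection, and under sum: for two such $A,B$ and $v\notin A+B$, the set $\{(a,b)\in A\times B:\,v-a-b\in S\}$ is all-or-nothing along each row (because $B$ is homogeneous and $v-a\notin B$) and along each column (because $A$ is homogeneous and $v-b\notin A$), hence equals $A\times B$ or is empty — that is, $v$ is complete or anticomplete to $A+B$. Because $sS=S$ for each $s\in S$, multiplication by $s$ is an automorphism of $\Gamma$, so $sH$ is again a homogeneous subgroup; connectedness makes $S$ generate $(R,+)$, and since $-1\in S$ this yields $RH=\sum_{s\in S}sH$, a finite sum of homogeneous subgroups, hence itself a homogeneous subgroup — and an ideal. (Dually, $H^{\star}:=\bigcap_{s\in S}sH$ is a homogeneous subgroup with $sH^{\star}=H^{\star}$ for all $s$, hence also an ideal.) What remains — and this is where I expect the real work — is to arrange that one of these ideals is nontrivial, equivalently that not every homogeneous subgroup generates the unit ideal; I would establish this using connectedness and anticonnectedness together with the decomposition $R=\prod_i R_i$ into local rings, arguing that if $H$ generated the unit ideal then the quotient Cayley graph $\Gamma/H$ (well defined since the cosets of $H$ are homogeneous) and the common fibre $\Gamma[H]$ would together force $\Gamma$ or $\Gamma^{c}$ to be disconnected.

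\textbf{Stage 3: the ideal lies in $J(R)$.} Let $I$ be a nontrivial homogeneous ideal. As $I\neq R$ it contains no unit, so $S\cap I=\emptyset$; homogeneity then says every coset $v+I\neq I$ is contained in $S$ or disjoint from $S$, and $S\cap I=\emptyset$ forces $S$ to be a union of $I$-cosets, i.e. $S+I=S$. Suppose $I\not\subseteq J(R)$. In $R/J(R)\cong\prod_{i=1}^{d}F_i$ the image of $I$ is a nonzero ideal $\prod_{i\in T}F_i$ with $T\neq\emptyset$, and $T\neq\{1,\dots,d\}$ (otherwise $I+J(R)=R$, whence $I=R$ by Nakayama). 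Lift the idempotent $\sum_{i\in T}e_i$ to $\iota\in I$; then $-1+\iota\in S+I=S\subseteq R^{\times}$, yet modulo $J(R)$ the element $-1+\iota$ vanishes in every factor $F_i$ with $i\in T$, so it lies in a maximal ideal and is not a unit — a contradiction. Hence $I\subseteq J(R)$.

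\textbf{Main obstacle.} Stages 1 and 3 are essentially the overlap lemma and the $-1\in S$ trick; the crux is the nontriviality claim in Stage 2 — ruling out that every homogeneous subgroup generates the unit ideal — which is precisely where both hypotheses, connected and anticonnected, must be used.
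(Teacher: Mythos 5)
This proposition is not proved in the paper at all: it is imported verbatim from \cite[Theorem 4.1 and Proposition 4.7]{chudnovsky2024prime}, so your proposal has to stand on its own. Stages 1 and 3 are essentially fine. The modular-decomposition (or overlap-lemma) argument does yield a homogeneous additive subgroup $H$ with $2\le |H|<|R|$, and once a proper homogeneous ideal $I$ is in hand, homogeneity at the vertex $0$ gives $S+I=S$, hence $1+I\subseteq S\subseteq R^{\times}$ and therefore $I\subseteq J(R)$ directly by the standard characterisation of the Jacobson radical --- the idempotent-lifting detour in your Stage 3 is correct but unnecessary.

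The genuine gap is exactly where you flag it, and it is not a loose end but the entire content of the cited Theorem 4.1. Both of your candidate ideals can be trivial: $RH=\sum_{s\in S}sH$ equals $R$ whenever $H$ is not contained in a maximal ideal (for instance whenever $H$ contains a unit, e.g.\ the diagonal subgroup of $\F_p\times\F_p$), and dually $H^{\star}=\bigcap_{s\in S}sH$ collapses to $0$ in the same example. The natural completion is via the stabiliser $D=\{x\in R: S+x=S\}$, which is \emph{always} a proper homogeneous ideal (properness because $D=R$ would force $S=S+R=R$, contradicting $0\notin S$); the whole difficulty is showing $D\neq 0$, equivalently producing a homogeneous subgroup $H_0$ with $H_0\cap S=\emptyset$, since then $S$ is a union of $H_0$-cosets and $H_0\subseteq D$. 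When every nontrivial homogeneous subgroup meets $S$ one must argue further --- descending through components of $G[H]$ and $G[H]^{c}$, or analysing the lexicographic decomposition $G\cong Q[G[H]]$ --- and your closing sentence about ``$\Gamma/H$ and the fibre $\Gamma[H]$ forcing disconnectedness'' is a hope, not an argument: when $H$ contains a unit, $R/H$ is only a group, not a ring, and both the quotient graph and its complement can perfectly well remain connected. As written, the proposal establishes the proposition only under the unproved extra hypothesis that some nontrivial homogeneous subgroup is disjoint from $S$.
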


\section{$G_R(p)$ where $R$ is a finite local ring of residue characteristics $\ell \neq p$} \label{sec:local_ring_l_p}

\subsection{When is $G_R(p)$ prime} 
Let $R$ be a local ring and $M$ its maximal ideal. Let $k=R/M$ be the residue field. We suppose that ${\rm char}(k)\not=p$.
%If ${\rm char}(k) \neq p$, then all roots of $x^p-a$ over $k$ lift to a root over $R$ by Hensel's lemma (see \cite{stacks-project}).  We conclude that 
\begin{prop} \label{pro:l_p_field}
$M$ is a homogeneous set in $G_R(p).$ As a result, if $G_R(p)$ is prime, then $R$ is a field.
\end{prop}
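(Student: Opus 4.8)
The plan is to establish the homogeneity of $M$ directly and then read off the consequence for primeness. Write $\ell={\rm char}(k)$, so $\ell\neq p$. Since $R$ is local, $R\setminus M=R^{\times}$, so a vertex $v\notin M$ is a unit; moreover, for any $m\in M$ the difference $v-m$ is congruent to $v$ modulo $M$, hence is again a unit. Thus $v$ is adjacent to $m$ in $G_R(p)$ if and only if $v-m\in(R^{\times})^p$, and the whole question becomes: among units, which ones are $p$-th powers? The key fact I would isolate is that $1+M\subseteq(R^{\times})^p$.

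To prove this, observe that $M$ is nilpotent (as $R$ is finite and local), so the descending chain $M\supseteq M^2\supseteq\cdots$ has successive quotients that are $k$-vector spaces; hence $|M|$, and therefore $|1+M|=|M|$, is a power of $\ell$. The set $1+M$ is a subgroup of $R^{\times}$ — it is closed under multiplication, and $(1+m)^{-1}=1-m+m^2-\cdots$ is a finite sum lying in $1+M$ — so it is a finite abelian group of order coprime to the prime $p$. Consequently the map $x\mapsto x^p$ is a bijection of $1+M$ onto itself, and in particular $1+M\subseteq(R^{\times})^p$.

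With the key fact in hand, for $v\in R^{\times}$ and $m\in M$ I would factor $v-m=v(1-v^{-1}m)$, where $1-v^{-1}m\in 1+M\subseteq(R^{\times})^p$. Since $(R^{\times})^p$ is a subgroup of $R^{\times}$, this forces $v-m\in(R^{\times})^p$ if and only if $v\in(R^{\times})^p$, with no dependence on the choice of $m$. Hence $v$ is joined to every element of $M$ when $v\in(R^{\times})^p$ and to no element of $M$ otherwise, so $M$ is a homogeneous set. Finally, $M$ is a proper subset of $V(G_R(p))=R$ (it omits $1$), while $|M|\geq 2$ exactly when $M\neq 0$, i.e. when $R$ is not a field; so if $R$ is not a field then $M$ is a non-trivial homogeneous set and $G_R(p)$ cannot be prime. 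Equivalently, primeness of $G_R(p)$ forces $R$ to be a field.

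The one step with real content is $1+M\subseteq(R^{\times})^p$, and this is precisely where the hypothesis ${\rm char}(k)\neq p$ together with the finiteness of $R$ is used, through the coprimality of $|1+M|$ and $p$; everything else is routine. I would also note in passing that the small-cardinality cases are harmless: if $R$ is not a field then $|M|\geq\ell\geq 2$ and $|M|<|R|$, so $M$ is genuinely non-trivial.
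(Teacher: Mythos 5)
Your proof is correct. The paper reaches the same key identity $(R^{\times})^p+M=(R^{\times})^p$ by a different mechanism: it observes that a finite local ring is henselian, so for $a\in(R^{\times})^p$ and $m\in M$ the reduction of $x^p-(a+m)$ modulo $M$ has a (simple, because ${\rm char}(k)\neq p$) root in $k$ that lifts to $R$ by Hensel's lemma. You instead isolate the subgroup $1+M\le R^{\times}$, note that its order $|M|$ is a power of $\ell$ and hence coprime to $p$, conclude that the $p$-th power map is a bijection of $1+M$, and then absorb $M$ into $(R^{\times})^p$ via the factorization $v-m=v(1-v^{-1}m)$. Both routes use ${\rm char}(k)\neq p$ at exactly one point — the paper through separability of $x^p-a$ over $k$, you through coprimality of $p$ with $|1+M|$ — and both are sound. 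Your version is slightly more self-contained (no appeal to henselian-ness) and makes explicit the biconditional ``$v$ adjacent to $m$ iff $v\in(R^{\times})^p$, independently of $m$,'' as well as the non-triviality check $2\le|M|<|R|$ when $R$ is not a field; the paper leaves these last steps implicit. No gaps.
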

\begin{proof} Since $R$ is finite, $R$ is henselian. Hence all roots of $x^p-a$ over $k$ lift to a root over $R$ by Hensel's lemma. Thus $(R^\times)^p+M=(R^\times)^p$ and $M$ is homogeneous.
\end{proof}
Since the Jacobson radical of a field is $0$, by \cref{prop:prime_old}, we know that the question of whether $G_{R}(p)$ is prime reduces to the question of whether it is connected and anticonnected. We remark that the graph $G_{R}(p)$ is not always connected (\cref{fig:p_16_5} shows an example of a $p$-unitary Cayley graph with $4$ connected components). In fact, in \cite[Corollary 2.5]{podesta2023waring}, the authors provide the precise condition for $G_{R}(p)$ to be connected. To recap this result, we need to recall the definition of a primitive divisor.

\begin{definition}
    Let $\ell$ be a prime number and $m,n$ two natural numbers. We say that $n$ is a primitive divisor of $\ell^m-1$ if $n| \ell^m-1$ and $n \nmid \ell^a-1$ for each $1 \leq a \leq m-1.$ In this case, we write $n \dag \ell^m-1.$
\end{definition}

\begin{prop} (\cite[Corollary 2.5]{podesta2023waring} and \cite[Corollary 3.1]{podesta2021_finitefield}) \label{prop:l_p_connected}
    Suppose that $R=k$ is a field of order $\ell^m$ where $\ell \neq p.$ Then $G_{R}(p)$ is connected if and only one of the following conditions hold 
    
    \begin{enumerate}
        \item $p \nmid \ell^m-1.$ In this case $G_R(p) = K_{\ell^m}, $
        \item $p| \ell^m-1$ and $\frac{\ell^m-1}{p} \dag \ell^m-1.$
    \end{enumerate}
\end{prop}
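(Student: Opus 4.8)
The plan is to analyze when the Cayley graph $G_k(p) = \mathrm{Cay}(k, (k^\times)^p)$ on the additive group of a finite field $k = \F_{\ell^m}$ is connected, reducing the problem to understanding the subgroup of $k^+$ generated by the $p$-th powers. Write $q = \ell^m$ and $S = (k^\times)^p$. If $p \nmid q-1$, then raising to the $p$-th power is a bijection on $k^\times$, so $S = k^\times$ and $G_k(p)$ is the complete graph $K_q$, which is connected; this handles case (1). So assume $p \mid q - 1$ and let $H$ be the additive subgroup of $k$ generated by $S$. The graph $G_k(p)$ is connected if and only if $H = k$, since the connected component of $0$ is exactly $H$ (it is an additive subgroup because it is generated by a symmetric set containing differences along paths, and $-1 \in S$ by the standing assumption).

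Next I would identify $H$ as a field. Since $S = (k^\times)^p$ is closed under multiplication, $H$ is closed under multiplication by elements of $S$; combined with being an additive subgroup, one checks $H$ is a subring of $k$, hence (being a finite domain) a subfield, say $H = \F_{\ell^d}$ with $d \mid m$. Connectedness is thus equivalent to $d = m$. The key point is to pin down $d$: since $S \subseteq H^\times = \F_{\ell^d}^\times$ and $|S| = (q-1)/p$, we get $(q-1)/p \mid \ell^d - 1$. Conversely, $H$ is generated additively by $S$, and $S$ is the unique subgroup of $k^\times$ of index $p$; one shows $H$ is the smallest subfield whose multiplicative group contains $S$, i.e. $\F_{\ell^d}^\times$ is the subgroup of $k^\times$ generated (multiplicatively, which for cyclic groups is the same as the subgroup) by $S$ together with... — more carefully, $d$ is the least divisor of $m$ such that $S \subseteq \F_{\ell^d}^\times$, equivalently the least $d \mid m$ with $(q-1)/p \mid \ell^d - 1$. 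I would prove $H = \F_{\ell^d}$ for this $d$ by showing $\F_{\ell^d} \subseteq H$ (because $S$ generates $\F_{\ell^d}$ additively, as $S$ is not contained in any proper subfield of $\F_{\ell^d}$ — here I may invoke \cite[Corollary 3.1]{podesta2021_finitefield}) and $H \subseteq \F_{\ell^d}$ (clear since $S \subseteq \F_{\ell^d}^\times$ and $\F_{\ell^d}$ is additively closed).

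Finally I would translate "$d = m$" into the primitive divisor condition. We have $d = m$ iff there is no proper divisor $a \mid m$ with $(q-1)/p \mid \ell^a - 1$, i.e. iff $(q-1)/p \nmid \ell^a - 1$ for all $1 \le a \le m-1$. Since $(q-1)/p \mid \ell^m - 1 = q - 1$ automatically, this is precisely the statement $\frac{\ell^m - 1}{p} \dag \ell^m - 1$, giving case (2). Combining the two cases yields the proposition. I expect the main obstacle to be the precise argument that $S$ is not contained in any proper subfield of $\F_{\ell^d}$, equivalently that $\F_{\ell^d}$ is generated additively by $S$; this is where the input from \cite{podesta2021_finitefield} is genuinely needed, since it is not purely formal — it rests on counting and on the structure of $p$-th power residues in finite fields.
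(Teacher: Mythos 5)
Your proposal is correct; note that the paper itself gives no proof of this proposition and simply cites \cite[Corollary 2.5]{podesta2023waring} and \cite[Corollary 3.1]{podesta2021_finitefield}, so what you have written is a genuine self-contained reconstruction. Two remarks. First, your worry at the end is unfounded: you do not need any counting input from \cite{podesta2021_finitefield}. By construction $H$ \emph{is} the additive group generated by $S$; since $1\in S$, $S=-S$, and $S$ is multiplicatively closed, $H$ is a subring with identity of $k$, hence a subfield $\F_{\ell^d}$, and it is the \emph{smallest} subfield containing $S$ (any subfield containing $S$ contains the additive group it generates). Because the subfields of $\F_{\ell^m}$ containing $S$ are exactly the $\F_{\ell^{d'}}$ with $d'\mid m$ and $|S|=(q-1)/p\mid \ell^{d'}-1$ (uniqueness of subgroups of a cyclic group), minimality pins down $d$ with no further work; the statement ``$S$ generates $\F_{\ell^d}$ additively'' is true by definition of $H$, not something to be proved separately. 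Second, your step ``no proper divisor $a\mid m$ with $(q-1)/p\mid \ell^a-1$, i.e.\ $(q-1)/p\nmid\ell^a-1$ for all $1\le a\le m-1$'' deserves one line: if $(q-1)/p\mid \ell^a-1$ for some $a<m$ not dividing $m$, then $(q-1)/p$ divides $\gcd(\ell^a-1,\ell^m-1)=\ell^{\gcd(a,m)}-1$ with $\gcd(a,m)$ a proper divisor of $m$, so the two formulations coincide. With these two points made explicit, the argument is complete.
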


%\begin{rem}
    %It is interesting to note that $G_R(p)$ is not always connected. For example, then $G_{\F_{16}}(5)$ has four connected components as can be seen from \cref{fig:p_16_5}.
\begin{figure}[H]
\centering
\includegraphics[width=0.4 \linewidth]{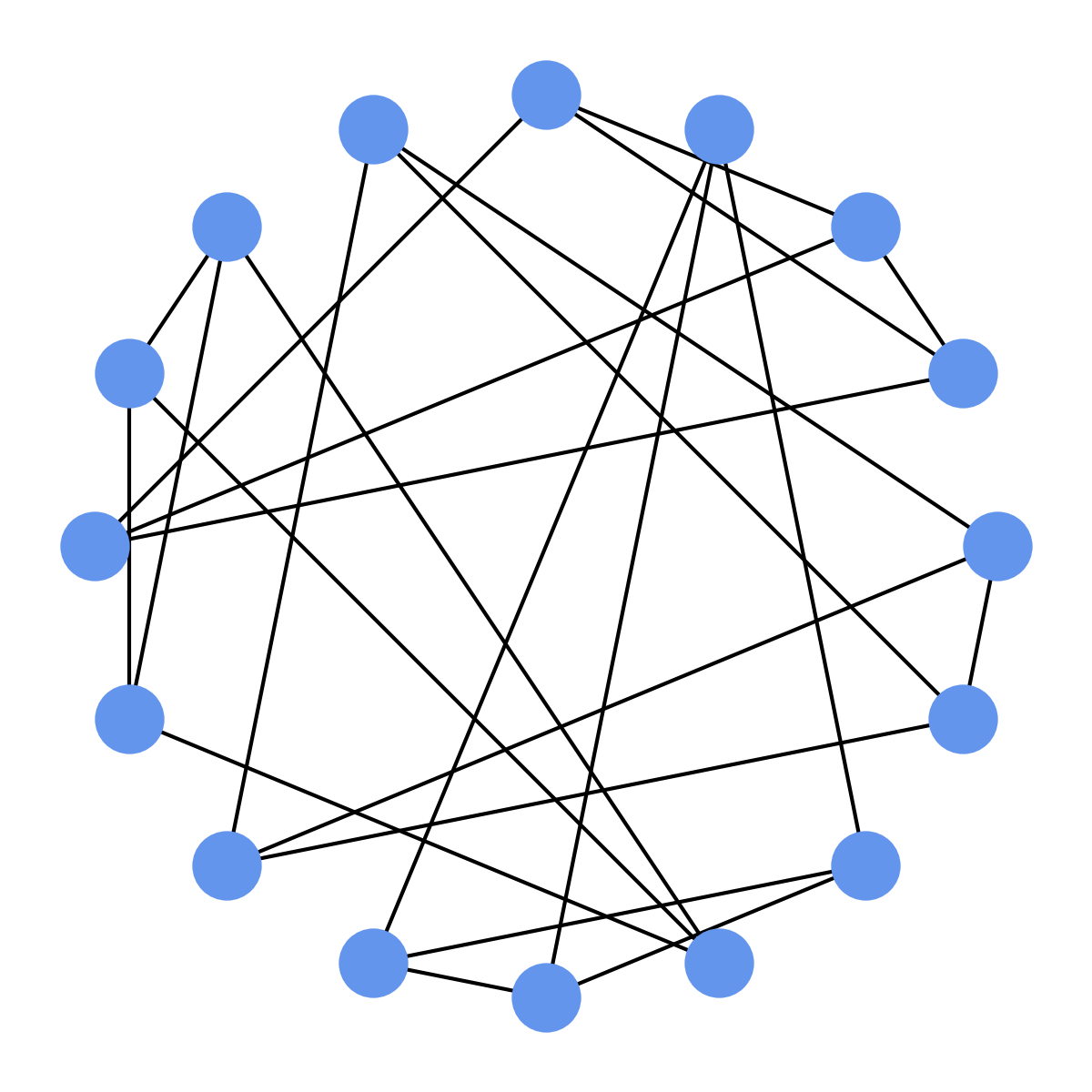}
\caption{The Cayley graph $G_{\F_{16}}(5)$}
\label{fig:p_16_5}
\end{figure}
%\end{rem}
\begin{cor} \label{cor:l>p}
    Suppose that $R=k$ is a field of order $\ell^m$ where $\ell \neq p.$ If $\ell >p$ then $G_R(p)$ is connected.
\end{cor}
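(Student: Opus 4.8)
The plan is to deduce the statement from \cref{prop:l_p_connected} together with a short size estimate. If $p \nmid \ell^m - 1$, then condition (1) of that proposition holds, $G_R(p) = K_{\ell^m}$, and we are done; so I would assume $p \mid \ell^m - 1$, in which case it remains only to verify condition (2), namely that $\frac{\ell^m-1}{p}$ is a primitive divisor of $\ell^m - 1$.

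Suppose, for contradiction, that $\frac{\ell^m-1}{p} \mid \ell^a - 1$ for some $1 \le a \le m-1$; note that this in particular forces $m \ge 2$. Since $a \ge 1$ we have $\ell^a - 1 \ge \ell - 1 \ge 1$, so the divisibility gives $\frac{\ell^m-1}{p} \le \ell^a - 1 \le \ell^{m-1} - 1$, whence $\ell^m - 1 \le p(\ell^{m-1}-1) = p\,\ell^{m-1} - p$. As $p \ge 2$, this yields $\ell^m < p\,\ell^{m-1}$, and dividing by $\ell^{m-1}$ gives $\ell < p$, contradicting the hypothesis $\ell > p$. Hence $\frac{\ell^m-1}{p} \dag \ell^m - 1$, condition (2) of \cref{prop:l_p_connected} holds, and $G_R(p)$ is connected.

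I do not anticipate any genuine obstacle; the only minor points are keeping careful track of the $-1$ terms in the inequality and observing that for $m = 1$ the primitive-divisor condition is satisfied vacuously (there is no exponent $1 \le a \le 0$ to test), so the contradiction step never needs to be invoked in that degenerate case.
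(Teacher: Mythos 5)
Your proof is correct and follows essentially the same route as the paper: both reduce to \cref{prop:l_p_connected} and then use the size estimate $\ell^a-1\le \ell^{m-1}-1<\frac{\ell^m-1}{p}$ (you phrase it contrapositively, the paper directly) to rule out $\frac{\ell^m-1}{p}$ dividing any $\ell^a-1$ with $1\le a\le m-1$. No gaps.
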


\begin{proof}
 If $p \nmid \ell^m-1$, then the statement is true since $G_R(p)= K_{\ell^m}$. Now, suppose that $p| \ell^m-1.$  If $1 \leq a \leq m-1$, then 
    \[ \ell^a -1 \leq \ell^{m-1}-1 < \frac{\ell^m-1}{\ell} < \frac{\ell^m-1}{p}.\]
    Consequently $\dfrac{\ell^m-1}{p} \dag \ell^m-1$ and hence $G_{R}(p)$ is connected. 
\end{proof}
\begin{prop} \label{prop:l_p_anti}
    Suppose that $R=k$ is a field of order $\ell^m$ where $\ell \neq p$. Assume further that $G_R(p)$ is connected. The following conditions are equivalent.
\begin{enumerate}
    \item $p \nmid \ell^m-1.$
    \item $(R^{\times})^p = R^{\times} = R \setminus{0}$. 
    \item $G_R(p) = K_{\ell^m}.$
    \item $G_R(p)$ is not anticonnected. 
\end{enumerate}
\end{prop}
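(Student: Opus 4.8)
The plan is to prove the cyclic chain of implications $(1)\Rightarrow(2)\Rightarrow(3)\Rightarrow(4)\Rightarrow(1)$; the first three steps are short and the last carries the real content.

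For $(1)\Rightarrow(2)$: since $R=k$ is a finite field, $R^{\times}$ is cyclic of order $\ell^m-1$, so if $p\nmid\ell^m-1$ then $x\mapsto x^p$ is a bijection of $R^{\times}$ and hence $(R^{\times})^p=R^{\times}=R\setminus\{0\}$. For $(2)\Rightarrow(3)$: if the connection set $(R^{\times})^p$ equals $R\setminus\{0\}$, then every pair of distinct vertices differs by an element of it, so $G_R(p)=K_{\ell^m}$. For $(3)\Rightarrow(4)$: the complement of $K_{\ell^m}$ is the edgeless graph on $\ell^m\ge2$ vertices, which is disconnected, so $K_{\ell^m}$ is not anticonnected.

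For $(4)\Rightarrow(1)$ I would argue by contraposition: assume $p\mid\ell^m-1$ and show $G_R(p)$ is anticonnected. The first step is to identify the complement. Since $R$ is a field, two distinct vertices always differ by a unit, so $a,b$ are non-adjacent in $G_R(p)$ exactly when $a-b\in T:=R^{\times}\setminus(R^{\times})^p$; using $-1\in(R^{\times})^p$ one checks $T=-T$, so $G_R(p)^c=\mathrm{Cay}\big((R,+),T\big)$. As a Cayley graph on a finite abelian group is connected iff its connection set generates the group, it suffices to prove that $T$ generates $(R,+)$. Suppose not; then $\langle T\rangle$ is a proper $\mathbb{F}_\ell$-subspace of $R$, so $|\langle T\rangle\setminus\{0\}|\le\ell^{m-1}-1$. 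Combining $R^{\times}=(R^{\times})^p\cup T\subseteq(R^{\times})^p\cup(\langle T\rangle\setminus\{0\})$ with $|(R^{\times})^p|=(\ell^m-1)/p$ gives $\ell^m-1\le(\ell^m-1)/p+\ell^{m-1}-1$, which rearranges to $p(\ell^m-\ell^{m-1})\le\ell^m-1<\ell^m$, i.e. $p<\ell/(\ell-1)\le2$, contradicting $p$ being prime. Hence $T$ generates $(R,+)$ and $G_R(p)^c$ is connected.

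The only real obstacle is this last implication — more precisely, realizing the complement as an additive Cayley graph and then finding the counting inequality that forbids the non-$p$-th-power units (together with the $p$-th powers) from lying in a proper subgroup of $(R,+)$. Everything else is routine. I note that this argument does not actually use the hypothesis that $G_R(p)$ is connected; alternatively one could split into the two cases of \cref{prop:l_p_connected} and only run the counting argument in case~(2).
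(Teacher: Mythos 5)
Your proof is correct, and your handling of the first three implications matches the paper's (which simply notes that the equivalence of (1), (2), (3) follows from $R^{\times}$ being cyclic of order $\ell^m-1$, and that (3) trivially implies (4)). For the remaining implication you take a genuinely different route. The paper proves $(4)\Rightarrow(2)$ directly: if $a\in R^{\times}\setminus(R^{\times})^p$, then multiplication by $a$ sends each edge difference into the coset $a(R^{\times})^p$, which is disjoint from the subgroup $(R^{\times})^p$, so $\Phi_a$ embeds $G_R(p)$ as a spanning subgraph of $G_R(p)^c$; connectivity of $G_R(p)$ then forces $G_R(p)^c$ to be connected. That argument is shorter and works verbatim with any proper subgroup $S\le R^{\times}$ in place of $(R^{\times})^p$, but it genuinely uses the hypothesis that $G_R(p)$ is connected. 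Your argument instead identifies $G_R(p)^c$ as the additive Cayley graph on $T=R^{\times}\setminus(R^{\times})^p$ and shows by a subspace-counting bound that $T$ generates $(R,+)$ whenever $p\mid\ell^m-1$ (the inequality $p<\ell/(\ell-1)\le 2$ is checked correctly, and $\langle T\rangle$ is indeed an $\F_\ell$-subspace since $(R,+)$ is elementary abelian). As you observe, this dispenses with the connectivity hypothesis entirely, so the proposition actually holds without it; the cost is that the counting step uses the specific index $[R^{\times}:(R^{\times})^p]=p$ and does not generalize to arbitrary connection subgroups. Both proofs are valid: the paper's is the more economical and more general in the subgroup parameter, yours yields the slightly stronger unconditional statement.
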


\begin{proof}
    The equivalence of $(1)-(2)-(3)$ follows from the fact that $R^{\times}$ is a cyclic group of order $\ell^m-1.$ Clearly, $(3)$ implies $(4).$ We claim that $(4)$ implies $(2)$ as well. Suppose, in fact, that $G_R(p)$ is not anticonnected but $R^{\times} \neq (R^{\times})^p.$ Let $a \in R^{\times} \setminus (R^{\times})^p.$ Let $\Phi_a: R \to R$ be the multiplication by $a$ map. Since $a \not \in (R^{\times})^p$, under $\Phi_a$, $G_R(p)$ is a subgraph of $G_R(p)^c.$ Since $G_R(p)$ is connected, $G_R(p)^c$ is connected as well. This is a contradiction. We conclude that $(R^{\times})^p = R^{\times}$. 
\end{proof}

Combining \cref{pro:l_p_field}, \cref{prop:l_p_connected} and \cref{prop:l_p_anti} we answer \cref{question:prime} in the case $R$ is a finite local ring with residue characteristics $\ell \neq p.$

\begin{thm} \label{thm:prime_l_p}
    Suppose that $R=k$ is a finite local ring of residue characteristics $\ell \neq p$. Then $G_{R}(p)$ is prime if and only if the following conditions hold 
    \begin{enumerate}
        \item $R$ is a finite field of order $\ell^m.$
        \item $p|\ell^m-1$ and $\frac{\ell^m-1}{p} \dag \ell^m-1.$  
    \end{enumerate}
\end{thm}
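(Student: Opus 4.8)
The plan is to assemble \cref{thm:prime_l_p} directly from the three propositions just proved, using \cref{prop:prime_old} to reduce primality to connectedness plus anticonnectedness. First I would observe that if $G_R(p)$ is prime then by \cref{pro:l_p_field} the ring $R$ must be a field, say of order $\ell^m$ with $\ell \neq p$; this gives condition (1). Having reduced to the field case, I would then invoke the general principle that a prime graph on more than two vertices is both connected and anticonnected (stated in the introduction right after \cref{question:prime}). Connectedness, via \cref{prop:l_p_connected}, forces either $p \nmid \ell^m-1$ or else $p \mid \ell^m-1$ with $\frac{\ell^m-1}{p} \dag \ell^m-1$. The first alternative is ruled out by \cref{prop:l_p_anti}: if $p \nmid \ell^m-1$ then $G_R(p) = K_{\ell^m}$, which is not anticonnected (its complement is edgeless on $\ell^m \geq 2$ vertices, hence disconnected unless $\ell^m = 1$), contradicting primality. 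Hence we are left with $p \mid \ell^m-1$ and $\frac{\ell^m-1}{p} \dag \ell^m-1$, which is condition (2). This settles the ``only if'' direction.

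For the converse, suppose (1) and (2) hold. By \cref{prop:l_p_connected}(2), condition (2) gives that $G_R(p)$ is connected. Since $p \mid \ell^m-1$, we have $(R^\times)^p \subsetneq R^\times$, so by \cref{prop:l_p_anti} (the contrapositive of (4)$\Rightarrow$(1), equivalently (1)$\Leftrightarrow$(4) applied to the connected graph $G_R(p)$) the graph $G_R(p)$ is anticonnected. Now I would apply \cref{prop:prime_old}: since $R$ is a field, its Jacobson radical is $0$, so any homogeneous set contained in the Jacobson radical is trivial; therefore a connected, anticonnected $G_R(p)$ with no non-trivial ideal inside the (zero) Jacobson radical has no non-trivial homogeneous set, i.e.\ it is prime. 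This completes the ``if'' direction.

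The routine care needed is just bookkeeping around the degenerate cases $|V(G)| = 2$ (i.e.\ $\ell^m = 2$), where ``prime'' and ``anticonnected'' conventions differ; one checks that when $\ell^m = 2$ we have $p \nmid \ell^m - 1 = 1$, so $G_R(p) = K_2$, and whether or not this counts as prime the stated conditions (2) fail ($p \nmid 1$), so the theorem is consistent. I do not anticipate a genuine obstacle here: all the substantive work — the homogeneity of $M$, the connectedness criterion, and the anticonnectedness dichotomy — is already packaged in \cref{pro:l_p_field}, \cref{prop:l_p_connected}, and \cref{prop:l_p_anti}, and \cref{prop:prime_old} supplies the bridge from ``connected and anticonnected'' to ``prime'' in the field case. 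The only mild subtlety is making sure \cref{prop:prime_old} is applied with the hypothesis ``connected and anticonnected'' verified first, which is exactly why the two directions are organized to establish those two properties before concluding primality.
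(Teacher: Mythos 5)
Your argument is correct and is essentially the paper's own proof: the paper states \cref{thm:prime_l_p} as the direct combination of \cref{pro:l_p_field}, \cref{prop:l_p_connected} and \cref{prop:l_p_anti}, with \cref{prop:prime_old} supplying the bridge from ``connected and anticonnected'' to ``prime'' once the Jacobson radical is zero. Your explicit handling of the $|R|=2$ edge case is a reasonable extra precaution that the paper leaves implicit.
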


\subsection{Induced subgraphs of $G_R(p)$} \label{sec:induced_l_p}
In this section, we study various induced subgraphs of $G_{R}(p).$ This will be helpful later on when we study the general case.  First, we remark that by \cref{pro:l_p_field}, we can safely assume that $R$ is a finite field of characteristics $\ell \neq p.$  Therefore, we will assume that $R$ is a finite field throughout this section. Furthermore, if $p \nmid |R|-1$ then $G_{p}(R) = K_{|R|}$. Consequently, most of our results would be either obvious or in need of some easy modifications.  Therefore, we will also assume that $p$ is a divisor of $|R|-1.$

Our main tool in this section is the theory of character sums and their Weil bounds (see \cite{andre1948courbes, mauduit1997finite}). To do so, we first make a connection between $G_{R}(p)$ and character theory. We know that $R^{\times}$ is a cyclic group of order $|R|-1.$ By fixing a generator %primitive root of unity 
$g \in R^{\times}$, we have an embedding 
\[ \iota: R^{\times} \hookrightarrow \C^{\times},\]
sending $g \mapsto \zeta_{|R|-1}$ where $\zeta_{|R|-1}$ is a primitive $(|R|-1)$-th root of unity. Let $\chi: R^{\times} \to \C$ be the character defined by $\chi(g)= \zeta_{|R|-1}^{\frac{|R|-1}{p}}.$ Then $\chi$ is a character of order $p$. Furthermore, $\chi(x)=1$ if and only $x \in (R^{\times})^p.$ We also define $\chi(0)=0$.

\begin{rem}
We recall that in \cite{chudnovsky2024prime, paleygraph}, we define and study the Paley graph $P_{\chi}$ as ${\rm Cay}(R, \ker(\chi))$. The above discussion shows that $P_{\chi} = G_{R}(p).$ 
\end{rem}

To apply the Weil bound, we introduce the following auxiliary polynomials. 

\[ P_1(x) = \frac{1}{p} \left(\frac{1-x^p}{1-x} \right) =\frac{1}{p} \left(1+x+\ldots+x^{p-1} \right),  \]
and 
\[ P_0(x) = 1-P_1(x) = \frac{p-1}{p} - \frac{1}{p} \left(x+x^2+\ldots+x^{p-1} \right).\]
We can see that if $z$ is a $p$-th root of unity; i.e. $z^p=1$, then 

\[
 P_1(z) = 
\begin{cases}
    1 & \text{if } z=1 \\
    0 & \text{else.} 
\end{cases}
\]
Similarly

\[
 P_0(z) = 
\begin{cases}
    0 & \text{if } z=1 \\
    1 & \text{else.} 
\end{cases}
\]

\begin{lem} \label{lem:K_3}
    Suppose that $R=k$ is a field of characteristics $\ell \neq p$. Suppose that $|R| \geq (p+1)^4$. Then $K_3$ is a subgraph of $G_R(p)$. More specifically, there exists $a \in R \setminus \{0, 1 \}$  such that the induced subgraph on $\{0, 1, a \}$ is $3.$
\end{lem}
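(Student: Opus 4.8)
The plan is to count, via character sums, the number $N$ of elements $a \in R$ such that $0,1,a$ form a triangle in $G_R(p)$, i.e. such that $a \in (R^\times)^p$, $a - 1 \in (R^\times)^p$, and (automatically) $1 - 0 = 1 \in (R^\times)^p$. Using the indicator $P_1(\chi(x))$ — which equals $1$ when $x \in (R^\times)^p$, equals $\frac{p-1}{p}\cdot 0 + \ldots$, more precisely equals $1$ if $\chi(x)=1$, $0$ if $\chi(x)$ is a nontrivial $p$-th root of unity, and (since $\chi(0)=0$) needs the separate bookkeeping $P_1(0) = 1/p$ at $x = 0$ — I would write
\[
N \;=\; \sum_{a \in R} P_1\big(\chi(a)\big)\, P_1\big(\chi(a-1)\big) \;-\; (\text{correction terms at } a = 0,1),
\]
and then expand $P_1(\chi(a))P_1(\chi(a-1))$ as a linear combination of the sums $\sum_{a} \chi^i(a)\chi^j(a-1)$ for $0 \le i,j \le p-1$.

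First I would isolate the main term: the $(i,j)=(0,0)$ contribution gives $\frac{1}{p^2}|R|$ (up to the small corrections at $a=0,1$, which are $O(1)$, bounded in absolute value by a constant depending only on $p$). Next I would bound every other term. For $(i,j)$ not both zero, $\sum_{a \in R}\chi^i(a)\chi^j(a-1)$ is a Jacobi-type character sum; by the Weil bound (as in \cite{andre1948courbes, mauduit1997finite}) each such sum has absolute value at most $\sqrt{|R|}$ (when exactly one of $\chi^i,\chi^j$ is trivial one gets a sum that is $O(1)$, even better). Since there are fewer than $p^2$ such pairs $(i,j)$ and the coefficients of $P_1(x)P_1(x')$ are all $\frac{1}{p^2}$ in absolute value, the total error from these terms is at most $\sqrt{|R|}$ plus a constant. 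Hence
\[
N \;\ge\; \frac{|R|}{p^2} - \sqrt{|R|} - C_p
\]
for an explicit constant $C_p$ depending only on $p$. The hypothesis $|R| \ge (p+1)^4$ then forces $N > 0$: indeed $|R| \ge (p+1)^4 \ge p^4$ gives $\frac{|R|}{p^2} \ge p^2\sqrt{|R|}/(p^2) \cdot \ldots$ — more carefully, $\sqrt{|R|} \ge (p+1)^2 > p^2$, so $\frac{|R|}{p^2} = \frac{\sqrt{|R|}}{p^2}\cdot\sqrt{|R|} \ge \sqrt{|R|}\cdot\frac{(p+1)^2}{p^2} > \sqrt{|R|} + (\text{something growing})$, which dominates $\sqrt{|R|} + C_p$ once the constants are pinned down. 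Finally, having found $a$ with $a, a-1 \in (R^\times)^p$, I note $a \ne 0$ (since $0 \notin (R^\times)^p$) and $a \ne 1$ (since $0 = a - 1 \notin (R^\times)^p$), so $\{0,1,a\}$ are three distinct vertices, pairwise adjacent: $K_3 \subseteq G_R(p)$.

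The main obstacle is purely bookkeeping: carefully tracking the "boundary" contributions at $a = 0$ and $a = 1$ where $\chi$ vanishes, so that the correction constant $C_p$ is genuinely independent of $|R|$, and confirming that the degenerate character sums (where $\chi^i$ or $\chi^j$ is trivial but not both) contribute only $O(1)$ rather than $O(\sqrt{|R|})$ — this is what makes the inequality $\frac{|R|}{p^2} - \sqrt{|R|} - C_p > 0$ hold under the clean hypothesis $|R| \ge (p+1)^4$ rather than a messier bound. The Weil-bound input itself is standard, so the real work is getting the elementary arithmetic of the final inequality to close with the stated threshold.
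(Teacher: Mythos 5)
Your proposal is correct, and it follows the same overall strategy as the paper (indicator functions built from $P_1\circ\chi$, a complete character sum, a Weil-type bound, then elementary arithmetic), but with a genuinely different decomposition of the count. The paper parametrizes the third vertex as $a=x^p$, which makes the adjacency $0\sim a$ automatic and leaves only the single indicator $P_1(\chi(1-x^p))$; expanding it produces the sums $\sum_{x\in R}\chi^k(1-x^p)$, i.e.\ a multiplicative character evaluated at a degree-$p$ polynomial, for which Weil gives the bound $(p-1)\sqrt{|R|}$, and the total error is $(p-1)^2\sqrt{|R|}$ plus the $O(p)$ correction from the exceptional set $T=\{0\}\cup\mu_p$. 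You instead keep $a$ free and use the product $P_1(\chi(a))P_1(\chi(a-1))$, so your off-diagonal terms are the Jacobi-type sums $\sum_a\chi^i(a)\chi^j(a-1)$, each bounded by $\sqrt{|R|}$ (and by $O(1)$ when exactly one of $\chi^i,\chi^j$ is trivial, as you note); summed over the $(p-1)^2$ nondegenerate pairs with coefficients $1/p^2$ this gives a total error below $\sqrt{|R|}+C_p$ against a main term $|R|/p^2$. Both routes close under $|R|\ge(p+1)^4$: yours needs roughly $\sqrt{|R|}>p^2$, the paper's needs roughly $\sqrt{|R|}>(p-1)^2$, both of order $p^4$. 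Your version buys a sharper per-term error (Jacobi sums rather than character sums of degree-$p$ polynomials) at the cost of a double expansion and the boundary bookkeeping at $a=0,1$; the paper's version needs only one indicator and is the template reused for its \cref{thm:induced_graph}. The one thing you should still do is actually pin down $C_p$ and write out the final inequality: with $\sqrt{|R|}\ge(p+1)^2$ one gets $|R|/p^2-\sqrt{|R|}\ge\sqrt{|R|}\,(2p+1)/p^2\ge(p+1)^2(2p+1)/p^2$, which comfortably exceeds the $O(1/p)$ corrections, so the arithmetic does close as you claim.
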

\begin{proof}
    We will look for $a$ of the form $a=x^p$ where $x \in R^{\times}.$ With this choice, we only need to make sure that $(1, x^p) \in E(G_R(p)).$ In other words, we need to find $x \neq 0$ such that $\chi(1-x^p)=1.$ Let $T$ be the set of all $a \in R$ such that either $a=0$ or $a^p=1.$ We know that $|T|=p+1.$ Let $S$ be the set of all $x \in R^{\times}$ such that $\chi(1-x^p)=1.$
    Let us consider the following function 
\[ f(a) = P_{1}(\chi(1-a^p)) =\frac{1}{p} \frac{\chi(1-a^p)^p-1}{\chi(1-a^p)-1} = \frac{1}{p} \sum_{k=0}^{p-1} \chi^k(1-a^p). \]
We can see that if $a \in R$ then $0 \leq |f(a)| \leq 1$. Additionally, if $a \in T \setminus \{0\}$ then $f(a)= \frac{1}{p}$; $f(0)=1$. Furthermore, if $x \not \in T$, then $f(a)=1$ if $a \in S$ and $f(a)=0$ otherwise. We conclude that 

\[ |S| = \sum_{a \in R} f(a) -  \sum_{a \in T} f(a) = \frac{1}{p} \sum_{k=0}^{p-1} \left(\sum_{a \in R} \chi^k(1-a^p) \right) -\frac{p+p}{p}.\]

By the Weil bound, we know that for $1 \leq k \leq p-1$
\[ \left|\sum_{a \in R} \chi^k(1-a^p) \right| \leq (p-1) \sqrt{|R|} .\] 
On the other hand, when $k=0$ we have 
\[ \left|\sum_{a \in R} 1 \right| = |R|. \]
Therefore, by triangle inequality we have 
\[ p|S| \geq |R| -(p-1)^2 \sqrt{|R|}-2p. \]
By an elementary calculation, we can see that that if $|R| \geq (p+1)^4$ then $|S|>0.$ In other words, we can find $x \in (R^{\times})$ such that the induced graph on $\{0, 1, x^p \}$ is $K_3.$
\end{proof}

\begin{rem}
   The bound $|R|>(p+1)^4$ also implies that the Waring problem for $R$ has an exact answer (see \cite[Example (a), Page 3]{podesta2021_finitefield}.) 
\end{rem}

\begin{cor} \label{cor:contains_cycle}
Suppose that $R=k$ is a field of characteristics $\ell \neq p$. Assume that either $\ell$ is odd or $|R|>(p+1)^4$. Then $R_{R}(p)$ is a not bipartite graph. 
\end{cor}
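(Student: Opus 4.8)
The plan is to exhibit an odd cycle in $G_R(p)$: since any graph containing an odd cycle --- indeed, any closed walk of odd length --- fails to be bipartite, this is enough. The hypothesis is a disjunction, so I would split according to which alternative holds.

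\emph{If $|R| > (p+1)^4$}, I would simply invoke \cref{lem:K_3}. Since $R$ is a field of characteristic $\ell \neq p$ with $|R| \ge (p+1)^4$, that lemma supplies an element $a \in R \setminus \{0,1\}$ whose induced subgraph on $\{0,1,a\}$ is $K_3$. A triangle is an odd cycle, so $G_R(p)$ is not bipartite.

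\emph{Otherwise $\ell$ is odd}, and I would argue directly. Since $1 = 1^p \in (R^\times)^p$, for every $a \in R$ the pair $\{a, a+1\}$ is an edge of $G_R(p)$. The prime subfield $\{0, 1, 2, \dots, \ell-1\}$ of $R$ consists of $\ell$ pairwise distinct elements (because $R$ has characteristic $\ell$) and $\ell = 0$ in $R$, so $0 \to 1 \to 2 \to \cdots \to \ell-1 \to 0$ is a cycle of length $\ell$ in $G_R(p)$. As $\ell$ is odd, this is an odd cycle and $G_R(p)$ is not bipartite.

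I do not expect a real obstacle here: the first case is an immediate consequence of the already-proved \cref{lem:K_3}, and the second uses only that the characteristic of $R$ is an odd prime together with the trivial fact that $1 \in (R^\times)^p$. The one point to state carefully is that $0, 1, \dots, \ell-1$ are pairwise distinct in $R$, so that the walk above is a genuine $\ell$-cycle; even this is inessential, since a closed walk of odd length already precludes bipartiteness.
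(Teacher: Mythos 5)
Your proof is correct and follows essentially the same route as the paper: the odd cycle $0 \to 1 \to \cdots \to \ell-1 \to 0$ through the prime subfield when $\ell$ is odd, and the triangle supplied by \cref{lem:K_3} when $|R| > (p+1)^4$. Your write-up is slightly more careful (explicitly justifying why consecutive integers are adjacent and why the $\ell$ elements are distinct), but there is no substantive difference.
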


\begin{proof}
     $G_R(p)$ contains the $C_\ell$-cycle 
     \[ 0 \to 1 \to \cdots \to \ell-1 \to 0. \]
     Therefore, if $\ell$ is odd then $G_{R}(p)$ is not a bipartite graph. Similarly, if $|R|>(p+1)^4$ then $\textbf{C}_3=K_3$ is an induced subgraph of $G_{R}(p)$. Therefore, $G_R(p)$ is not a bipartite graph.
\end{proof}

\begin{rem}
    We wrote some Sagemath code to search for an example where $G_R(p)$ is bipartite (by \cref{cor:contains_cycle}, we only need to consider the case $R$ is a finite field of characteristics $2$). So far, our attempt has been unsuccessful. This leads us to wonder whether such an example exists at all. 
\end{rem}

In \cite{cohen1988clique}, the authors show that for each $n$, the complete graph $K_n$ is an induced subgraph of the generalized Paley graph $G_{\F_{\ell}}(2)$ as long as $\ell$ is large enough. More generally, the main result in \cite{bollobas1981graphs} shows that for a fixed graph $G$, $G$ is an induced subgraph of $G_{\F_{\ell}}(2)$ if $\ell$ is big enough.  At the time of our writing, it is unclear to us whether similar results have been obtained for $G_{\F_{\ell}}(p).$ Since the proof for this fact is quite standard and straightforward, we provide it here for the sake of completeness. 
\begin{thm} \label{thm:induced_graph}
   Let $G$ be an undirected graph. Let $p$ be a fixed prime number and $R$ a prime finite field of characteristics $\ell \neq p$; i.e, $R=\F_{\ell}$. Assume further that $p|\ell-1.$ Then there exists a constant $C$ depending on $p$ and $G$ such that if $\ell>C$ then $G$ is isomorphic to an induced subgraph of $G_R(p).$
\end{thm}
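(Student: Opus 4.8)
The plan is to realize an arbitrary finite graph $G$ on vertex set $\{v_1,\dots,v_n\}$ inside $G_{\F_\ell}(p)$ by choosing the images $a_1,\dots,a_n\in\F_\ell$ one at a time and counting, via the character $\chi$ of order $p$ introduced above, the number of tuples $(a_1,\dots,a_n)$ with the prescribed adjacency pattern. For a pair $i<j$ we want $\chi(a_i-a_j)=1$ when $v_iv_j\in E(G)$ and $\chi(a_i-a_j)$ equal to a fixed non-$p$-th-power value (equivalently $P_1(\chi(a_i-a_j))=0$ and $a_i\neq a_j$) when $v_iv_j\notin E(G)$. I would detect the ``edge'' condition with $P_1(\chi(a_i-a_j))$ and the ``non-edge'' condition with $P_0(\chi(a_i-a_j))$, exactly the indicator polynomials defined before \cref{lem:K_3}, so that the number of good tuples is
\[
N \;=\; \sum_{a_1,\dots,a_n\in\F_\ell}\ \prod_{v_iv_j\in E(G)} P_1\bigl(\chi(a_i-a_j)\bigr)\ \prod_{v_iv_j\notin E(G)} P_0\bigl(\chi(a_i-a_j)\bigr),
\]
up to a correction subtracting the contribution of tuples with some $a_i=a_j$, which I will handle at the end.

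Next I would expand each $P_1$ and $P_0$ into its definition as a linear combination of powers $\chi^k(a_i-a_j)$, $0\le k\le p-1$, and multiply everything out. This writes $N$ as $\ell^{-\binom n2}\,p^{-\binom n2}$ times a sum, over all assignments of an exponent $k_{ij}\in\{0,1,\dots,p-1\}$ (weighted by explicit constants of absolute value $\le 1$ coming from the $P_0$ coefficients) of the character sums
\[
\Sigma(\mathbf k)\;=\;\sum_{a_1,\dots,a_n\in\F_\ell}\ \prod_{i<j}\chi^{k_{ij}}\bigl((a_i-a_j)\bigr).
\]
The term with all $k_{ij}=0$ contributes $\ell^n$, the main term. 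Every other term is, after fixing all but one variable, a sum of the shape $\sum_{a\in\F_\ell}\prod_t \chi^{m_t}(a-b_t)$ with not all $m_t\equiv 0\pmod p$; by the Weil bound for multiplicative character sums (in the form used for \cref{lem:K_3}, iterated one variable at a time) each such inner sum is $O_{n,p}(\sqrt\ell)$, so $|\Sigma(\mathbf k)| = O_{n,p}(\ell^{\,n-1/2})$ for every $\mathbf k\neq\mathbf 0$. Since the number of tuples $\mathbf k$ is $p^{\binom n2}$, a constant depending only on $n$ and $p$, we get $N = \ell^{-\binom n2}(\ell^n + O_{n,p}(\ell^{\,n-1/2}))$, which is positive once $\ell$ exceeds a constant $C=C(n,p)$; finally the number of tuples with a repeated coordinate is $O_n(\ell^{n-1})$, absorbed into the same error term, so for $\ell>C$ there is a good tuple with distinct coordinates, giving the desired induced copy of $G$.

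The main obstacle is the clean bookkeeping of the iterated Weil estimate: one must check that for every nonzero exponit vector $\mathbf k$ the polynomial being summed is genuinely not a perfect $p$-th power (so that the Weil bound applies and does not degenerate), keeping track of how the $b_t$'s and multiplicities depend on the previously chosen coordinates, and that the accumulated constants stay bounded independently of $\ell$. This is the step of \cite{bollobas1981graphs}-type arguments that needs care; everything else — expanding the $P_i$, collecting the main term, discarding the diagonal — is routine. I would present the one-variable Weil bound as a lemma, induct on $n$ to peel off variables, and only then assemble the count.
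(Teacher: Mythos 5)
Your proposal is correct in outline but takes a genuinely different route from the paper. You count $n$-tuples $(a_1,\dots,a_n)\in\F_\ell^n$ directly, expanding the product of the indicators $P_1$ (edges) and $P_0$ (non-edges) over all pairs and controlling the non-principal terms by an iterated one-variable Weil bound; the paper instead parametrizes the candidate vertex set by a \emph{single} field element, taking the vertices to be $y^{a_1},\dots,y^{a_k}$ for a fixed integer tuple with pairwise distinct differences (e.g.\ $a_i=2^{i-1}$), so that after imposing $\chi(y)=1$ every adjacency condition becomes a condition on $\chi(y^{a_j-a_i}-1)$ and the whole count is one single-variable sum $\sum_y P_1(\chi(y))\prod_{i<j}P_{c_{ij}}(\chi(y^{a_j-a_i}-1))$, handled by one application of the Weil bound for $\sum_y\chi(f(y))$. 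Your multi-variable version is the more standard Bollob\'as/Graham--Spencer-type argument and avoids having to engineer exponents with distinct differences, but it shifts the work into exactly the step you flag: when you peel off a variable $a_i$, the inner polynomial $\prod_{j}(x-a_j)^{k_{ij}}$ fails to be a non-$p$-th-power only on the fibres where two of the frozen coordinates coincide (so that exponents of a repeated linear factor can sum to a multiple of $p$); those fibres number $O(\ell^{n-2})$ and the inner sum there is trivially at most $\ell$, while on the generic fibre some root has multiplicity in $\{1,\dots,p-1\}$ and Weil gives $O_p(\sqrt\ell)$ — together this yields the claimed $O_{n,p}(\ell^{n-1/2})$. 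With that observation made explicit, your argument closes. Two small corrections to the write-up: the prefactor of your expansion is $p^{-\binom n2}$ only (the $P_i$ contribute factors of $1/p$, not $1/\ell$), so the count should read $N=p^{-\binom n2}\ell^{\,n}+O_{n,p}(\ell^{\,n-1/2})$ rather than carrying a spurious $\ell^{-\binom n2}$; and note that $P_0(\chi(a_i-a_j))=1$ already encodes ``$a_i\neq a_j$ and non-adjacent,'' so the only diagonal correction needed is for coincidences among edge-pairs and within the main term, which your $O_n(\ell^{n-1})$ subtraction covers.
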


\begin{proof}
  Let $k=|G|.$  The key idea in our proof is to find $y \in \F_{\ell}$ and a tuple $(a_1, a_2, \ldots, a_k) \in \Z^k$ such that the induced graph on $\{y^{a_1}, y^{a_2}, \ldots, y^{a_k} \}$ is $G$.  We can choose $(a_1, a_2, \ldots, a_k)$ in such a way that $a_i-a_j$ are pairwisely different. For example, we can take $a_i = 2^{i-1}.$ In order to make sure that the induced graph on $\{y^{a_1}, y^{a_2}, \ldots, y^{a_k} \}$ is $G$, it is sufficient to find $y \in \F_{\ell}$ such that $\chi(y)=1$ and 

\[
\chi(y^{a_j-a_i}-1)  =
\begin{cases}
    1 & \text{if } (i,j) \in V(G) \\
    \neq 1 & \text{else.} 
\end{cases}
\]

Let $S$ be the set of all such $y$. Our goal is to show that $|S|>0$ whenever $\ell$ is sufficiently large. Similar to the proof of \cref{lem:K_3}, we will do so by a counting argument.  Let $C= (c_{ij})$ be the adjacency matrix of $G.$ Define the following function  
\[ f(x) = P_1(\chi(x)) \prod_{i<j} P_{c_{ij}} (\chi(x^{a_j-a_i}-1)).\]
Let $T$ be the set of $y$ such that either $y =0$ or $y$ is a root of the equation $y^{a_j-a_i}-1$ for some $i<j.$ Then $T$ is a finite set. For each $x \in \F_{\ell}$, $0 \leq |f(x)| \leq 1.$ Furthermore, if $x \in \F_{\ell} \setminus T$, then $f(x)=1$ if $x \in S$ and $f(x)=0$ otherwise. 
Therefore, we have 
\[ |S| = |\sum_{y \in \F_\ell} f(y) - \sum_{y \in T} f(y)| \geq |\sum_{y \in \F_\ell} f(y)| - |T|. \]
By the Weil bound, we also have 
\[ |\sum_{y \in \F_\ell} f(y)| \geq \frac{(p-1)^N}{p^{{|G| \choose 2}+1}} \ell - C \sqrt{\ell},\]
where $N$ is the number of $(i,j)$ such that $i<j$ and $(i,j) \not \in V(G)$ and $C$ is a constant depending on $G$ and $p$ only. We conclude that 

\[ |S| \geq \frac{(p-1)^N}{p^{{|G| \choose 2}+1}} \ell - C \sqrt{\ell} - |T| .\] 
Therefore, for $\ell$ big enough, $|S|>0.$ In other words, we can find $y$ such that the induced graph on $\{y^{a_1}, y^{a_2}, \ldots, y^{a_k} \}$ is $G.$
\end{proof}
\begin{rem}
In \cite[Theorem 9.1]{unitary} the authors classify all $G_{R}(1)$ which are perfect. In particular, they show that the graph $G_{R}(1)$ is always a perfect graph if $R$ is a finite local ring. However, things are different when $p>1$. In fact, by \cref{thm:induced_graph}, for each $p$ there exists a constant $C_p$ such that if $\ell >C_p$, then the cycle graph $\textbf{C}_5$ is an induced subgraph of $G_{\F_\ell}(p)$. Consequently $G_{\F_\ell}(p)$ is not perfect. 
\end{rem}

\section{ $G_R(p)$ where $p$ is a local ring of residue characteristics $p$} \label{sec:local_ring_p_p}
In this section, we study \cref{question:prime} and \cref{question:connected} in the case $R$ is a local field of residue characteristics $p.$ We will start our investigation with the following lemma.
\begin{lem} \label{lem:algebra_connected}
    Suppose that $R$ is an $\F_p$-algebra and ${\rm Cay}(R, (R^{\times})^p)$ is connected. Then $R$ is a finite field. 
\end{lem}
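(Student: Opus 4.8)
The plan is to exploit the fact that raising to the $p$-th power is the Frobenius endomorphism on an $\mathbb{F}_p$-algebra, which turns the multiplicative structure into an additive one and makes the connectivity hypothesis very rigid. Write $\phi\colon R\to R$ for the Frobenius map $x\mapsto x^p$; since $R$ is commutative and $\mathrm{char}(R)=p$, $\phi$ is a ring homomorphism, so $(R^{\times})^p=\phi(R^{\times})$ is precisely the group of units of the subring $\phi(R)=R^p$. The key observation is that $\phi$ is \emph{additive}: $(x+y)^p=x^p+y^p$. Consequently the set $S=(R^{\times})^p$ is closed under the operation $s,s'\mapsto s+s'$ whenever that sum again lands in $S$, and more importantly the connected component of $0$ in $\mathrm{Cay}(R,S)$ is exactly the additive subgroup generated by $S$, which is contained in $R^p=\phi(R)$.

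First I would make this last point precise: the vertices reachable from $0$ are sums $s_1+\cdots+s_k$ with each $s_i\in S\subseteq R^p$, and since $R^p$ is an additive subgroup (being the image of the additive homomorphism $\phi$), every such sum lies in $R^p$. Hence connectivity of $\mathrm{Cay}(R,S)$ forces $R=R^p$, i.e. $\phi$ is surjective, hence bijective (as $R$ is finite), so $\phi$ is an automorphism of $R$. Next I would use connectivity one more time, now inside the subring: since $\phi$ is an isomorphism $R\xrightarrow{\sim}R$, every element of $R$ is a $p$-th power, and I claim this forces $R$ to be a field. The cleanest route is to show that $R$, being a finite commutative ring in which Frobenius is bijective, is reduced (if $x^p=0$ then $x=\phi^{-1}(0)$... more carefully, $\phi$ injective and $x^p=0=0^p$ gives $x=0$, so $R$ has no nonzero nilpotents), hence $R$ is a product of finite fields, $R\cong \prod_{i} \mathbb{F}_{q_i}$.

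Then I would finish by ruling out a nontrivial product. If $R=R_1\times R_2$ with both factors nonzero, then the idempotent $e=(1,0)$ satisfies: $e$ is a unit? No — but the point is that the additive subgroup generated by $S=(R_1^{\times})^p\times(R_2^{\times})^p$ is contained in $(R_1^{\times}\cup\{0\})\times\cdots$; more simply, every element of $S$ has \emph{both} coordinates nonzero (a unit in a product has all coordinates units), so every sum of elements of $S$ that equals, say, $(1,0)$ would need the second coordinates to cancel — which is fine additively, so I should instead argue via the structure: in $\mathbb{F}_{q}$ every nonzero element is a unit and a $p$-th power, so $G_{\mathbb{F}_q}(p)=K_q$; by the tensor product decomposition recalled in the outline, $G_R(p)\cong\prod_i K_{q_i}$, and a tensor product of complete graphs on $\geq 2$ vertices each is connected only if there is a single factor (indeed $K_{q_1}\times K_{q_2}$ is disconnected when $q_1=q_2=2$, and in general the tensor product of graphs is connected iff each factor is connected and non-bipartite, and $K_2$ is bipartite; handling the parity of $q_i$ and the case $q_i=2$ is the one genuinely fiddly point). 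Therefore $R$ has a single factor and is a finite field.

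The main obstacle I anticipate is the very last step: pinning down exactly when $\prod_i K_{q_i}$ is connected. One must recall that the tensor product $\Gamma_1\times\Gamma_2$ of connected graphs is connected if and only if at most one of them is bipartite (Weichsel's theorem), note that $K_{q}$ is bipartite precisely when $q=2$ (it is $K_2$, an edge), and that $K_1$ is a single vertex with no edges so a factor of $K_1$ would make the whole product edgeless. So a clean self-contained alternative, avoiding Weichsel, is the additive argument: show directly that the connected component of $0$ is the subgroup $H=\langle S\rangle_{+}\le (R,+)$; we have shown $H\subseteq R^p=R$, so $H=R$; but also $H$ is stable under multiplication by any element of $S$, hence by any element of the subring generated by $S$ inside $R$, and one can then show $\langle S\rangle_{\text{ring}}$ is a field using that a finite commutative ring generated by units-that-are-$p$-th-powers and in which Frobenius is surjective is reduced and indecomposable. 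I would present the Frobenius-surjectivity argument for reducedness as the heart of the proof and invoke Weichsel's theorem (or the explicit $K_2$ obstruction) only to kill the remaining product, since that keeps the exposition short and transparent.
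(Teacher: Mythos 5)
Your argument up to ``$R$ is reduced, hence a product of finite fields'' is correct and runs parallel to the paper's: the paper reaches $R=R^p$ by showing that the additive group $H_2$ generated by $(R^{\times})^p$ equals $H_1^p$ (using $n^p\equiv n \pmod p$ to absorb the integer coefficients into the Frobenius), whereas you observe directly that $R^p=\phi(R)$ is an additive subgroup containing $(R^{\times})^p$; either way, connectivity forces Frobenius to be surjective, hence bijective, hence $R$ has no nonzero nilpotents.

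The final step, however, is wrong, and it cannot be repaired. You assert that a tensor product of complete graphs on at least two vertices each is connected only if there is a single factor. That is false: by Weichsel's theorem (a product of connected graphs is connected iff \emph{at most one} factor is bipartite --- not ``iff each factor is non-bipartite,'' as you state it), $K_{q_1}\times K_{q_2}$ is connected whenever $q_1,q_2\geq 3$. Concretely, take $R=\F_3\times\F_3$ and $p=3$: then $(R^{\times})^3=R^{\times}$ (cubing is a bijection on a unit group of order $2$ in each factor), $-1=(2,2)$ is a cube, and the additive group generated by $R^{\times}$ is all of $R$ (e.g.\ $(1,1)+(2,1)=(0,2)$ yields $(0,1)$, and symmetrically $(1,0)$), so ${\rm Cay}(R,(R^{\times})^3)=K_3\times K_3$ is connected while $R$ is not a field. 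Thus the statement you are proving is false as literally printed; it holds only under the standing assumption of the surrounding section that $R$ is \emph{local}, which is how the lemma is actually used (it is applied to $R/pR$ with $R$ local in \cref{cor:connected}). With that hypothesis the conclusion is immediate at the point you reached: injectivity of Frobenius kills the nilpotent maximal ideal $M$ (this is the paper's one-line finish, ``since $M$ is nilpotent, $M=0$''), or equivalently a reduced local Artinian ring is a field --- no graph-theoretic analysis of the product decomposition is needed or available.
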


\begin{proof}
    Let $H_1$ (respectively $H_2$) be the abelian group of generated by $R^{\times}$ (respectively $(R^{\times})^p).$ We claim that 
    \[ H_2 = H_1^p = \{a^p | a \in H_1 \}. \]
    Let $x$ be an element of $H_2$. Then we can write 
    \[ x = \sum_{i=1}^d n_i r_i^p,\]
    where $n_i \in \Z$ and $r_i \in R^{\times}.$ By Fermat's little theorem, we know that $n_i^p =n_i$ for all $1 \leq i \leq d.$ Therefore, we can write 
    \[ x = \sum_{i=1}^d n_i^p r_i^p = \left(\sum_{i=1}^d n_i r_i \right)^p.\]
    We conclude that $x \in H_1^p$. Therefore, $H_2 \subseteq H_1^p.$ By a similar argument can show that $H_1^p \subseteq H_2.$ This shows that $H_2 = H_1^p.$

    Now, because ${\rm Cay}(R, (R^{\times})^p)$ is connected, we must have $H_2 = R.$ Consequently $H_1^p =R$ and hence $R = R^p.$ This implies that the Frobenius map $\Phi: R \to R$ sending $r \mapsto r^p$ is an isomorphism. Since $M$ is nilpotent, we must have $M=0.$ In other words, $R$ is a field.
\end{proof}

\begin{cor} \label{cor:connected}
    Let $R$ be a finite local commutative  ring such that $k=R/M$ has characteristics $p.$ Suppose that ${\rm Cay}(R, (R^{\times})^p)$ is connected. Then $M=pR$ and $R/pR$ is a finite field. 
\end{cor}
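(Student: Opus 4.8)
The plan is to deduce this corollary from \cref{lem:algebra_connected} by reducing modulo an appropriate ideal. The key observation is that $R$ need not itself be an $\F_p$-algebra, but we can produce a quotient of $R$ that is, namely $R/pR$, and then transfer the connectedness hypothesis along the quotient map. First I would argue that if ${\rm Cay}(R, (R^\times)^p)$ is connected, then so is ${\rm Cay}(\bar R, (\bar R^\times)^p)$ where $\bar R = R/pR$: the reduction map $R \to \bar R$ is a surjective ring homomorphism, it sends $(R^\times)^p$ into $(\bar R^\times)^p$ (units map to units since $pR$ lies in the Jacobson radical $M$), and a surjective ring map carrying the connection set into the connection set induces a graph homomorphism that is surjective on vertices; since connectedness passes to images under such maps (any two vertices of $\bar R$ are images of vertices of $R$, joined there by a path whose image is a walk in $\bar R$), $\mathrm{Cay}(\bar R,(\bar R^\times)^p)$ is connected.

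Next, $\bar R = R/pR$ is a finite local commutative ring (quotient of a local ring) with residue characteristic $p$, and crucially it is killed by $p$, hence an $\F_p$-algebra. Therefore \cref{lem:algebra_connected} applies to $\bar R$ and tells us $\bar R = R/pR$ is a finite field. This already gives the second assertion. For the first assertion, that $M = pR$: since $R/pR$ is a field, $pR$ is a maximal ideal of $R$; but $R$ is local with unique maximal ideal $M$, so $pR = M$.

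The one point that needs a little care — and what I would flag as the main (though minor) obstacle — is the claim that connectedness of a Cayley graph passes to the quotient Cayley graph, i.e. that the graph-homomorphism induced by $R \twoheadrightarrow \bar R$ is well-behaved enough. This is genuinely elementary: if $a, b \in \bar R$, lift them to $\tilde a, \tilde b \in R$, take a path $\tilde a = x_0, x_1, \dots, x_n = \tilde b$ in ${\rm Cay}(R,(R^\times)^p)$, so each $x_{i}-x_{i-1} \in (R^\times)^p$; then each $\bar x_i - \bar x_{i-1} = \overline{x_i - x_{i-1}} \in (\bar R^\times)^p$, so $\bar x_0, \dots, \bar x_n$ is a walk in ${\rm Cay}(\bar R,(\bar R^\times)^p)$ from $a$ to $b$ (allowing repeated vertices is harmless for connectivity). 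One should also double-check that the image of a unit in $R$ is a unit in $R/pR$ and that a $p$-th power maps to a $p$-th power, both of which are immediate from functoriality of $(-)^\times$ and of $x \mapsto x^p$ under ring homomorphisms. With these remarks in place the corollary follows in a few lines, so I would keep the written proof short: establish the reduction, cite \cref{lem:algebra_connected}, and conclude $pR$ is maximal hence equal to $M$.
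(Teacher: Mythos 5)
Your proof is correct and takes essentially the same route as the paper, which simply says ``apply \cref{lem:algebra_connected} to $R/pR$''; you have merely filled in the (elementary) verification that connectedness descends along the quotient $R \twoheadrightarrow R/pR$, a fact the paper also records separately as the easy direction of \cref{prop:connected}. The concluding step $pR$ maximal $\Rightarrow pR = M$ matches the intended argument as well.
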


\begin{proof}
Apply \cref{lem:algebra_connected} for the ring $R/p.$
    \end{proof}

\begin{prop} \label{prop:connected}
    Let $S$ be a subset of $R$. Then ${\rm Cay}(R, S)$ is connected if and only if ${\rm Cay}(R/p, \varphi(S))$ is. Here $\varphi: R \to R/p$ is the canonical map. 
\end{prop}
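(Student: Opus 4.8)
The plan is to prove the two directions of the equivalence separately, exploiting the fact that $\varphi \colon R \to R/pR$ is a surjective ring homomorphism with kernel $pR$, and that $pR$ is contained in the Jacobson radical $M$ (since $R$ is local of residue characteristic $p$, so $p \in M$). Throughout I will use the elementary fact that a Cayley graph ${\rm Cay}(A, T)$ on an abelian group $A$ with connection set $T$ is connected if and only if $T$ generates $A$ as a group; equivalently, the subgroup $\langle T \rangle$ generated by $T$ equals $A$.

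For the forward direction, suppose ${\rm Cay}(R, S)$ is connected, so $\langle S \rangle = R$ (as additive groups). Applying the homomorphism $\varphi$, we get $\langle \varphi(S) \rangle = \varphi(\langle S \rangle) = \varphi(R) = R/pR$, so ${\rm Cay}(R/pR, \varphi(S))$ is connected. This direction is immediate and needs no hypothesis beyond surjectivity of $\varphi$.

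For the converse, suppose ${\rm Cay}(R/pR, \varphi(S))$ is connected, i.e. $\langle \varphi(S) \rangle = R/pR$. Then $\langle S \rangle + pR = R$, i.e. $\langle S \rangle$ surjects onto $R/pR$. Now I want to upgrade this to $\langle S \rangle = R$. The key point is that $pR \subseteq M = \mathrm{rad}(R)$, so $\langle S \rangle + M = R$ forces $\langle S \rangle = R$ by a Nakayama-type argument: if $A \subseteq R$ is a subgroup (hence a subgroup of the finite abelian group $(R,+)$) with $A + M = R$, then since $M$ is nilpotent, iterating $A + M = R$ gives $A + M^n = R$ for all $n$, and for $n$ large $M^n = 0$, whence $A = R$. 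More directly: from $A+pR = R$ we get $A + p^k R = R$ for all $k$ by iteration ($R = A + pR = A + p(A + pR) = A + pA + p^2 R \subseteq A + p^2 R$, etc.), and $p$ is nilpotent in $R$ since $R$ is a finite local ring of residue characteristic $p$, so $p^k R = 0$ for some $k$. Hence $\langle S \rangle = R$ and ${\rm Cay}(R, S)$ is connected.

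The main (and only mildly delicate) obstacle is the converse direction's passage from $\langle S \rangle + pR = R$ to $\langle S \rangle = R$; everything hinges on $p$ being nilpotent in $R$ (equivalently $pR \subseteq M$ with $M$ nilpotent), which holds precisely because $R$ is a finite local ring of residue characteristic $p$ — note this hypothesis is implicitly inherited from the section heading, so I should state it explicitly in the proposition or its proof. Once that is in place, the rest is a routine translation between graph connectivity and the generation of the additive group by the connection set. I would present the proof in roughly four lines: recall the connectivity criterion, do the trivial forward implication, do the iteration in the converse, and invoke nilpotence of $p$ to conclude.
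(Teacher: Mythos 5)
Your proof is correct and follows essentially the same route as the paper's: both directions reduce connectivity to the statement that the additive subgroup generated by the connection set is the whole group, the forward implication is the surjectivity of $\varphi$, and the converse is the same iteration $\langle S\rangle + pR = R \Rightarrow \langle S\rangle + p^nR = R$ combined with the nilpotence of $p$ in the finite local ring $R$. Your remark that the hypothesis "$R$ is a finite local ring of residue characteristic $p$" is only implicit in the proposition statement (it is inherited from the section's standing assumptions) is a fair observation, but it does not affect the validity of the argument.
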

\begin{proof}
    Let $H$ be the abelian group generated by $S.$ By definition, $\varphi(H)$ is the abelian group generated by $\varphi(S).$ As a result, if ${\rm Cay}(R,S)$ is connected, then $ {\rm Cay}(R/p, \varphi(S))$ is connected as well. 
    
    Now, suppose that  ${\rm Cay}(R/p, \varphi(S))$ is connected. We then have $\varphi(H) =R/p$. We claim that $H=R.$ Let $r \in R.$ Because $\varphi(H)=R/p$, we can find $h_1 \in H$ such that $\phi(h_1)=\phi(r).$ This implies that $r-h_1 \in \ker(\varphi) = pR.$ Therefore, we can write  
    \[ r = h_1 + pr_1,\]
    where $h_1 \in H$ and $r_1 \in R.$ Keeping the same process, we see that for each $n \geq 1,$ we can find $(h_1, h_2, \ldots, h_n) \in H^n$ and  $r_n \in R$ such that 
    \[ r = h_1 + ph_2 +\cdots+ p^{n-1} h_n +p^n r_n. \]
    Because $p$ is nilpotent, we can find $n \in \N$ such that $p^n=0.$ Consequently, in the above equation, we would have 
    \[ r = h_1 + ph_2 +\cdots+ p^{n-1} h_n \in H.
    \qedhere\]
\end{proof}

Let $(R,M)$ be a local ring and $\phi: R \to R/M:=k$ be the canonical map. 

\begin{prop} \label{prop:unit_power}
    Let $(R,M)$ be a finite commutative local ring such that $k=R/M$ has characteristics $p.$ Let $a \in k^{\times}.$ Then 
    \begin{enumerate}
        \item There exists $x \in R^{\times}$ such that $\phi(x^p)=a.$
        \item Suppose that $M=pR$. If $x_1, x_2 \in R^{\times}$ such that $\phi(x_1^p)=\phi(x_2^p)=a$, then $x_1^p \equiv x_2^p \pmod{p^2R}.$
    \end{enumerate}
    \end{prop}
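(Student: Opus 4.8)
\textbf{Proof plan for Proposition~\ref{prop:unit_power}.}

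The plan is to work directly with lifts along $\phi\colon R\to k$ and exploit that $k$ has characteristic $p$, i.e.\ that the Frobenius is additive modulo the relevant ideals. For part~(1), pick any lift $y\in R$ of $a$; since $a\in k^\times$, $y\notin M$, so $y\in R^\times$. Now I want to adjust $y$ to a $p$-th power lifting $a$. The key observation is that the map $R^\times\to k^\times$, $x\mapsto \phi(x^p)$, has image equal to $(k^\times)^p$ composed with\ldots\ actually the cleaner route: the composite $R^\times \xrightarrow{x\mapsto x^p} R^\times \xrightarrow{\phi} k^\times$ and I claim it is surjective. Equivalently, every element of $k^\times$ is of the form $\phi(x)^p = \phi(x^p)$. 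Since $k$ is a finite field of characteristic $p$, Frobenius $z\mapsto z^p$ is an \emph{automorphism} of $k$, hence of $k^\times$; so given $a$ there is $\bar z\in k^\times$ with $\bar z^p=a$, and lifting $\bar z$ to $x\in R^\times$ gives $\phi(x^p)=\phi(x)^p=\bar z^p=a$. That settles~(1).

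For part~(2), assume $M=pR$ and $\phi(x_1^p)=\phi(x_2^p)=a$. First, from $\phi(x_1)^p=\phi(x_2)^p$ and Frobenius being injective on $k$, we get $\phi(x_1)=\phi(x_2)$, so $x_1 - x_2 \in M = pR$; write $x_1 = x_2 + p t$ for some $t\in R$. Then expand
\[
x_1^p = (x_2 + pt)^p = \sum_{j=0}^{p} \binom{p}{j} x_2^{p-j} (pt)^j = x_2^p + \binom{p}{1} x_2^{p-1}(pt) + \sum_{j\ge 2}\binom{p}{j} x_2^{p-j} p^j t^j.
\]
Every term with $j\ge 2$ carries a factor $p^j$ with $j\ge 2$, hence lies in $p^2 R$. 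The $j=1$ term is $p\cdot x_2^{p-1}\cdot p t = p^2 x_2^{p-1} t \in p^2R$. Therefore $x_1^p \equiv x_2^p \pmod{p^2 R}$, as claimed. (One should note the binomial coefficients $\binom{p}{j}$ for $1\le j\le p-1$ are integers divisible by $p$, which gives an \emph{extra} factor of $p$ on those middle terms, but we do not even need it here since the explicit powers of $p$ already suffice; the only term needing care is $j=1$ and $j=p$, both of which visibly land in $p^2R$.)

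The main obstacle—really the only subtle point—is part~(1): one must be sure that surjectivity of Frobenius on the \emph{residue field} is enough, rather than needing a $p$-th power in $R$ itself lifting $a$; the trick is precisely to lift \emph{after} taking the $p$-th root in $k$, so that no Hensel-type argument is required and one never needs $p$ to be invertible. For part~(2) the computation is routine; the only thing to be careful about is not to over-claim (the congruence is modulo $p^2R$, not modulo $pMR$ or anything finer), and to observe that when $p=2$ the expansion $(x_2+2t)^2 = x_2^2 + 4 x_2 t + 4 t^2$ still lands in $4R = p^2R$, so the statement holds uniformly in $p$.
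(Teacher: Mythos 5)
Your proposal is correct and follows essentially the same route as the paper: for (1) you take a $p$-th root of $a$ already in the residue field $k$ and lift it (you justify $(k^\times)^p=k^\times$ via the Frobenius automorphism, the paper via $|k^\times|$ being coprime to $p$ — the same fact), and for (2) you deduce $\phi(x_1)=\phi(x_2)$ from injectivity of Frobenius on $k$, write $x_1=x_2+pt$, and expand the binomial, which is exactly the paper's argument with the final congruence spelled out in more detail.
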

\begin{proof}
Since $k^{\times}$ is a cyclic of order prime to $p$, we have $k^{\times}=(k^{\times})^p.$ Therefore, we can find $b \in k^{\times}$ such that $a=b^p.$ Let $x$ be any lift of $b$ to $R$; namely $\phi(x)=b$. We then see that  $x\in R^\times$ and
\[ \phi(x^p) = \phi(x)^p=b^p=a. \]
For the second part, we observe that we have 
\[ 0 = (\phi(x_1)^p - \phi(x_2)^p) = (\phi(x_1)-\phi(x_2))^p.\]
Because $R/pR$ is a field, we must have $\phi(x_1)=\phi(x_2).$ In other words, $x_1 = x_2+pa$ for some $a \in R.$ We can then see that $x_1^p \equiv x_2^p \pmod{p^2R}$. 
\end{proof}
\begin{cor} \label{cor:size_of_R}
    Suppose that $M=pR$ and $p^2R=0.$ Then, for each $a \in k^{\times}$, there exists a unique $y \in (R^{\times})^p$ such that $\phi(y)=a.$ In other words, the induced map $\phi: (R^{\times})^p \to k^{\times} = (k^{\times})^p$ is an isomorphism. 
\end{cor}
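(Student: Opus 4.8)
The plan is to deduce \cref{cor:size_of_R} directly from \cref{prop:unit_power}, so essentially no new idea is needed — it is a packaging of the two parts of that proposition under the extra hypotheses $M = pR$ and $p^2R = 0$. First I would fix $a \in k^\times$. By part (1) of \cref{prop:unit_power} there exists $x \in R^\times$ with $\phi(x^p) = a$, so setting $y = x^p$ gives an element of $(R^\times)^p$ with $\phi(y) = a$; this establishes existence. For uniqueness, suppose $y_1 = x_1^p$ and $y_2 = x_2^p$ are both in $(R^\times)^p$ with $\phi(y_1) = \phi(y_2) = a$. By part (2) of \cref{prop:unit_power} (whose hypothesis $M = pR$ is in force) we get $x_1^p \equiv x_2^p \pmod{p^2 R}$; but $p^2 R = 0$, so $x_1^p = x_2^p$, i.e. $y_1 = y_2$. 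Hence $y$ is unique.

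Having shown that $\phi$ restricts to a bijection $(R^\times)^p \to k^\times$, I would then observe two remaining points to justify the last sentence of the statement. First, the image really is all of $k^\times$: since $k$ has characteristic $p$, $k^\times$ is cyclic of order prime to $p$, so raising to the $p$-th power is an automorphism of $k^\times$, whence $k^\times = (k^\times)^p$; combined with surjectivity onto $k^\times$ from part (1), the codomain is correctly identified as $(k^\times)^p$. Second, $\phi|_{(R^\times)^p}$ is a group homomorphism because $\phi$ is a ring homomorphism and $(R^\times)^p$ is a subgroup of $R^\times$; a bijective group homomorphism is an isomorphism. That completes the argument.

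I do not anticipate a genuine obstacle here — the content is entirely carried by \cref{prop:unit_power}. The one place to be slightly careful is making sure the hypotheses line up: part (2) of \cref{prop:unit_power} requires $M = pR$, which is assumed, and the passage from $x_1^p \equiv x_2^p \pmod{p^2R}$ to $x_1^p = x_2^p$ uses $p^2 R = 0$, also assumed. So the proof is just: existence from part (1), uniqueness from part (2) plus $p^2R = 0$, and then the cosmetic remark that a bijective homomorphism onto $k^\times = (k^\times)^p$ is an isomorphism.
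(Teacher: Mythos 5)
Your proposal is correct and matches the paper's intent exactly: the corollary is stated without a separate proof precisely because it is the immediate combination of parts (1) and (2) of \cref{prop:unit_power} with the hypothesis $p^2R=0$, which is what you carry out. The additional remarks ($k^{\times}=(k^{\times})^p$ since $|k^{\times}|$ is prime to $p$, and multiplicativity of $\phi$) are the right finishing touches and introduce nothing beyond what the paper already uses.
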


By \cref{prop:connected} and \cref{prop:unit_power}, we have the following.

\begin{prop} \label{prop:local_connected}
    Let $(R,M)$ be a finite commutative local ring such that $k=R/M$ has characteristics $p.$ Then ${\rm Cay}(R, (R^{\times})^p)$ is connected if and only $M = pR$. Furthermore, in the case $p=2$, ${\rm Cay}(R, (R^{\times})^p)$ is connected if and only if $M = pR=0$ if and only if $R$ is a field if and only if $G_R(p) \cong K_{|R|}.$
\end{prop}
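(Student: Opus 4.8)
The plan is to prove \cref{prop:local_connected} in two stages, using the results already established. For the first assertion, recall that \cref{prop:connected} reduces the connectivity of ${\rm Cay}(R,(R^\times)^p)$ to the connectivity of ${\rm Cay}(R/pR,\varphi((R^\times)^p))$, and \cref{cor:connected} shows that if the graph is connected then $M=pR$ (and $R/pR$ is a finite field). So one direction is immediate. For the converse, assume $M=pR$. Then $R/pR = R/M = k$ is a field, and by \cref{prop:unit_power}(1) the image $\varphi((R^\times)^p)$ contains every element of $k^\times = (k^\times)^p$; since it also contains $0$ trivially (as a difference set we only need the subgroup it generates), the subgroup generated by $\varphi((R^\times)^p)$ is all of $k$. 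Hence ${\rm Cay}(k,\varphi((R^\times)^p))$ is connected — indeed it equals $K_{|k|}$ — and by \cref{prop:connected} so is ${\rm Cay}(R,(R^\times)^p)$.

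For the second assertion, specialize to $p=2$. By the first part, connectivity is equivalent to $M=2R$. So it suffices to show that when $p=2$ the condition $M=2R$ forces $M=0$. Here is the key point: if $M=2R$, then $M^2 = 4R = 2M$, so $M = 2R \supseteq 2M = M^2$, giving $M^2 \subseteq M$; more usefully, $M = 2R$ and $M^n = 2^n R$ for all $n$. Since $M$ is nilpotent, $2^n R = 0$ for some $n$, and we want to deduce $2R = 0$. The cleanest route: the residue field $k = R/M$ has characteristic $2$, so $2 \in M = 2R$, meaning $2 = 2r$ for some $r \in R$, i.e. $2(1-r) = 0$; but $r \equiv 1 \pmod M$ (since $2 \in M$ forces... wait, need care). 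Instead I would argue via \cref{lem:algebra_connected}: actually the slicker path is to note that when $p=2$ and $M=pR$, the Frobenius-type argument from \cref{lem:algebra_connected} already shows that connectivity forces $R$ to be a field once we pass to $R/pR$ — but we want $R$ itself. Let me use: $M = 2R$ implies every element of $M$ is divisible by $2$, hence $M = 2R = 2(2R + \text{something})$... The honest approach is: $M = 2R$, so $M = 2R = 2\cdot 2R \cdot (\ldots)$ iterated gives $M = M^2$ — wait, $2R \cdot 2R = 4R^2 \subseteq 4R = M^2$? We have $M^2 = (2R)^2 = 4R$, and $M = 2R$. Is $4R = 2R$? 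Only if $2R = 4R$, equivalently $2R = 2(2R)$, i.e. $M = 2M$, which by Nakayama (as $M$ is nilpotent, or just a nonzero finitely generated module over a ring with $M$ in the radical) forces $M = 0$. So the main step is: $M = pR \Rightarrow pM = p\cdot pR = p^2 R$, and separately $M = pR$ and one checks $M = pM$ precisely when $p=2$, because then $pR = 2R$ and $pM = 2\cdot 2R = 4R$, while $M = 2R$; here $4R = 2R$ iff ... hmm, this is not automatic. The genuinely clean statement is simpler than my fumbling: for $p=2$, $M = 2R$ gives $M \subseteq \{2r : r\in R\}$, and $2 = 2\cdot 1 \in 2R = M$, while for general $p$, $p \in M$ always; the special feature of $p=2$ must be exploited elsewhere.

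The hardest part, and where I would focus, is pinning down exactly why $p=2$ is special in forcing $M = 2R = 0$: the point is that $2 \in M = 2R$ can be combined with the fact that $-1 = 1$ in characteristic $2$ (the hypothesis $-1 \in (R^\times)^p$ is automatic), but more concretely, if $M = 2R$ then $R$ is generated over $\mathbb{Z}$-locally by units and $2$, and $4 = 2^2$ so $2 = 2\cdot(\text{unit}) + (\text{stuff in }M^2)$ — iterating, $2 \in \bigcap_n M^n = 0$. That last sentence is the real argument: since $M = 2R$, we have $2 \in M$, hence $2 = 2\cdot a_1$ with $a_1 \in M$ impossible unless... no: $2 \in M = 2R$ just says $2 = 2a$ for some $a\in R$, automatically true with $a=1$. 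So that fails too. I think the correct and intended argument is: $M = pR$ means $M$ is a principal ideal generated by $p$; then $M^2 = p^2 R = pM$; so the associated graded $M/M^2 = M/pM$ is a $1$-dimensional $k$-vector space unless $M = pM$, in which case Nakayama gives $M=0$. For $p = 2$: we also know $p = 2 \in M$, so $2$ acts as $0$ on $k = R/M$; then $M/M^2 = M/2M$ but $M = 2R$ so $2M = 2\cdot 2R = 2M$ — circular. I will present the argument as: $M = 2R \Rightarrow M = 2R = 2M' $ where one shows directly $2R = 0$ by observing $R/2R$ is a field (first part) of characteristic $2$ so $2 \in 2R$ acts trivially, while the $2$-torsion structure of the finite abelian group $(R,+)$ combined with $M$ nilpotent forces $2R=0$; I expect to fill this in by the short computation that $M = pR$, $M$ nilpotent, and $p = 2$ together give $pR = p^2 R = \cdots = 0$, the equality $pR = p^2R$ being the substantive claim to verify when $p=2$ via Nakayama's lemma applied to the module $pR$ over the local ring $R$ (noting $p\cdot pR \subseteq M\cdot pR$, so $pR = M \cdot pR$ would follow if $p^2R = pR$, hence $pR = 0$ by Nakayama once that equality is shown). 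The remaining chain of equivalences — $M = 2R = 0 \Leftrightarrow R$ a field $\Leftrightarrow G_R(p) \cong K_{|R|}$ — is then routine: a field of characteristic $2$ has $(R^\times)^2 = R^\times$ since squaring is the Frobenius automorphism, so $G_R(2) = {\rm Cay}(R, R^\times) = K_{|R|}$.
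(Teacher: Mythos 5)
The first assertion of your proof is correct and follows the paper's own route: \cref{cor:connected} for the forward direction, and \cref{prop:connected} together with \cref{prop:unit_power}(1) (plus the fact that $(k^{\times})^p=k^{\times}$ when $\mathrm{char}(k)=p$) to see that the quotient graph is $K_{|k|}$ for the converse. No issue there.

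The $p=2$ part has a genuine gap, and the direction you are pushing in cannot be repaired. You are trying to deduce $M=2R=0$ by purely ring-theoretic means (Nakayama applied to $pR$, nilpotency of $M$, $pR=p^2R$), and you repeatedly notice that each such attempt is circular or unsubstantiated --- correctly so, because the implication ``$M=2R$ and $M$ nilpotent $\Rightarrow$ $M=0$'' is simply false: $R=\Z/4$ is a finite local ring with $M=2R\neq 0$ and $4R=0\neq 2R$. What rules this example out, and what your proof never uses, is the paper's standing hypothesis that $-1\in(R^{\times})^p$ (imposed so that the Cayley graph is undirected); for $R=\Z/4$ one has $((\Z/4)^{\times})^2=\{1\}\not\ni -1$. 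You in fact assert that this hypothesis ``is automatic'' in characteristic $2$, which is the precise point where the argument goes wrong: $-1=1$ holds in the residue field $k$, not in $R$ itself. The paper's proof of the $p=2$ case consists entirely of exploiting this hypothesis: take $x\in R$ with $x^2=-1$, reduce mod $M$ to get $(\bar{x}-1)^2=0$ and hence $x=1+2a$ for some $a\in R$, and then compute $0=x^2+1=2(1+2a+2a^2)$; since $1+2a+2a^2$ is a unit (it is $1$ plus an element of $M$), this forces $2=0$, i.e.\ $M=2R=0$. Your concluding chain of equivalences ($R$ a field of characteristic $2$ $\Rightarrow$ $(R^{\times})^2=R^{\times}$ $\Rightarrow$ $G_R(2)=K_{|R|}$) is fine once this step is in place.
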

\begin{proof}
    The forward direction follows from \cref{cor:connected}. Now suppose that $M=pR.$ By \cref{prop:connected}, $G_R(p)$ is connected if and only if $G_{R/p}(p)= G_{k}(p)$ is also connect where $k=R/M$. Since $k=R/M$ is a finite field of characteristics $p$, $|k^{\times}|$ is a group with order prime to $p.$ As a result, $k^{\times} = (k^{\times})^p = k \setminus \{0 \}$ and hence $G_{k}(p) = K_{|k|}$ the complete graph on $|k|$ nodes. In particular, it is connected. We conclude that $G_R(p)$ is connected as well.

    Let us consider the case $p=2$. Because $G_R(p)$ is connected, by the above argument, we know that $M=2R$ and $R/M$ is a field of characteristics $p=2.$ We claim that $M=0$.  In fact, by our assumption that $G_{R}(2)$ is an undirected graph, $-1 \in (R^{\times})^2$; i.e, there exists $x \in R$ such that $x^2=-1.$ Let $\bar{x} \in R/M = R/2R$ be the projection of $x$ to the residue field of $R$. Then $\bar{x}^2=\overline{-1} = \bar{1}.$ Consequently $(\bar{x}-1)^2=0$. Since $R/2R$ is a field, we must have $\bar{x}=1.$ In other words, we can write $x = 1+2a$ for some $a \in R.$ We then have 
     \[ 0 = x^2+1 = (1+2a)^2+1 = 2(1+2a+2a^2).\]
     Since $1+2a+2a^2 \in R^{\times}$, we conclude that $2=0$ in $R.$ This shows that $M=2R=0$ and $R$ is a field of characteristics $2.$ In this case $R^{\times} = (R^{\times})^2 = R \setminus \{0 \}.$  Consequently $G_R(p) = K_{|R|}.$
\end{proof}

\begin{cor} \label{cor:p_p_bipartite}
    $G_R(p)$ is a connected bipartite undirected graph if and only if $p=2$ and $R=\F_2.$
\end{cor}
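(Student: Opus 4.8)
The plan is to prove the two implications separately, splitting the forward direction according to the parity of $p$ and leaning on \cref{prop:local_connected} to dispatch the case $p=2$. The backward direction is immediate: if $p=2$ and $R=\F_2$ then $R^\times=\{1\}$, so $(R^\times)^2=\{1\}$ and $G_R(2)=\mathrm{Cay}(\F_2,\{1\})$ is a single edge $K_2$, which is connected and bipartite.

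For the converse, assume $G_R(p)$ is connected and bipartite, and first suppose $p$ is odd. Since $R$ is a finite local ring whose residue field $k=R/M$ has characteristic $p$, filtering $R$ by the powers of $M$ and observing that each successive quotient $M^i/M^{i+1}$ is a finite-dimensional $k$-vector space shows that $|R|$ is a power of $p$, hence odd. On the other hand $G_R(p)=\mathrm{Cay}(R,(R^\times)^p)$ is a Cayley graph on the additive group of $R$, hence regular of degree $|(R^\times)^p|\geq 1$ (indeed $1=1^p\in(R^\times)^p$), so it has no isolated vertices. A regular bipartite graph of positive degree has its two colour classes of equal size — counting the edges incident to each class gives $d|A|=e(A,B)=d|B|$ — so its number of vertices is even, contradicting the oddness of $|R|$. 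Hence $p$ cannot be odd. (This half of the argument does not even use connectedness; it shows $G_R(p)$ is never bipartite when $p$ is odd.)

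It remains to treat $p=2$. Here I invoke the last sentence of \cref{prop:local_connected}: connectedness of $G_R(2)$ forces $R$ to be a field and $G_R(2)\cong K_{|R|}$. A complete graph $K_n$ is bipartite precisely when $n\leq 2$, and $|R|\geq 2$ since $R$ is a nonzero ring, so $|R|=2$, i.e. $R=\F_2$. The proof is short once \cref{prop:local_connected} is available, so there is no genuine obstacle; the only things to be careful about are citing the correct ($p=2$) clause of \cref{prop:local_connected} and phrasing the parity observation (a regular bipartite graph of positive degree has an even number of vertices) precisely.
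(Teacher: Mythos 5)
Your proof is correct, and for the odd-$p$ half it takes a genuinely different route from the paper. The backward direction and the $p=2$ case coincide with the paper's treatment (both reduce $p=2$ to the last clause of \cref{prop:local_connected} and the fact that $K_n$ is bipartite only for $n\le 2$). For odd $p$, however, the paper argues by exhibiting the closed walk $0\to 1\to\cdots\to p-1\to 0$ as an odd cycle, whereas you use a parity count: $|R|$ is a power of $p$ (filter by powers of $M$), hence odd, while a regular bipartite graph of positive degree has an even number of vertices. Your argument is arguably more robust: the paper's final edge $p-1\to 0$ requires $1-p\in (R^{\times})^p$, which is automatic when $p=0$ in $R$ but can fail otherwise --- for instance in $R=\Z/p^2$ one has $(1-p)^{p-1}\equiv 1+p\not\equiv 1 \pmod{p^2}$, so $1-p$ is not a $p$-th power of a unit and the claimed $p$-cycle is not actually a cycle there (e.g.\ $G_{\Z/9}(3)$ is the $9$-cycle, which contains no $3$-cycle through $0,1,2$). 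Your counting argument sidesteps this entirely, and as you note it even establishes non-bipartiteness for odd $p$ without using connectedness; the paper's cycle argument, when it applies, has the advantage of producing an explicit short odd cycle. One small point of care: the identity $d|A|=e(A,B)=d|B|$ holds for any $d$-regular bipartite graph with bipartition $(A,B)$, connected or not, so your parenthetical remark is justified as stated.
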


\begin{proof}
     $G_R(p)$ contains the $p$-cycle 
     \[ 0 \to 1 \to \cdots \to p-1 \to 0.\]
     Consequently, if $p$ is odd, then $G_R(p)$ contains an odd cycle and therefore it is not bipartite. Let us consider the case $p=2$. By \cref{prop:local_connected}, we know that $G_R(p) = K_{|R|}$ in this case. Therefore, $G_R(p)$ is bipartite if and only $|R|=2.$ In other words, $R=\F_2.$
\end{proof}

\begin{prop} \label{prop:local_anti_connected}
    Let $(R,M)$ be a finite commutative local ring such that $k=R/M$ has characteristics $p.$ Suppose that $G_R(p)$ is connected. Then ${\rm Cay}(R, (R^{\times})^p)$ is anti-connected if and only if $R$ is not a field. 
\end{prop}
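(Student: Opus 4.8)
The plan is to prove the equivalence in two directions, using the known structural facts from the local characteristic-$p$ setting. Recall that since $G_R(p)$ is connected, Corollary~\ref{cor:connected} gives $M = pR$ and $R/pR = k$ is a finite field of characteristic $p$; moreover $p$ must be odd here, because by Proposition~\ref{prop:local_connected} the case $p=2$ forces $R = \F_2$, which is a field and for which $G_R(p)$ is the trivial graph $K_2$, hence not anticonnected (consistent with the claim). So we may assume $p$ odd and $M = pR$.

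First I would prove the easy direction: if $R$ is a field, then $R$ has characteristic $p$, so $R^{\times} = (R^{\times})^p = R \setminus \{0\}$ (the unit group has order prime to $p$), hence $G_R(p) = K_{|R|}$; a complete graph on $\geq 2$ vertices is not anticonnected since its complement is edgeless, so ${\rm Cay}(R,(R^\times)^p)$ is not anticonnected. For the contrapositive of the other direction, I would likewise observe: ${\rm Cay}(R,(R^\times)^p)$ anticonnected $\Rightarrow$ $R$ not a field, follows once we show that when $R$ is not a field, $G_R(p)^c$ is connected.

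So the substance is: assume $M = pR \neq 0$ and show $G_R(p)^c$ is connected, i.e. that any two vertices are joined by a path of non-edges (differences lying in $R \setminus (R^\times)^p = M \cup \big(R^\times \setminus (R^\times)^p\big)$). The key point is that $(R^\times)^p$ is a proper subset of $R^\times$: indeed the reduction map $\phi\colon (R^\times)^p \to k^\times = (k^\times)^p$ is surjective by Proposition~\ref{prop:unit_power}(1), but $(R^\times)^p$ cannot be all of $R^\times$ — otherwise every unit would be a $p$-th power and, combined with $M \neq 0$, one derives a contradiction (e.g. pick $1 + u$ with $u \in M \setminus \{0\}$; a $p$-th root $x$ of it reduces to $1$ in $k$, so $x = 1 + m$ with $m \in M$, and $x^p = 1 + p m + \binom{p}{2} m^2 + \cdots$; since $M = pR$ and $p$ is odd one checks $x^p \equiv 1 \pmod{p M}$, so $1 + u \equiv 1 \pmod{pM}$, forcing $u \in pM = p^2 R$; iterating with nilpotence of $M$ gives $u = 0$, a contradiction). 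Hence there is a unit $v \notin (R^\times)^p$, so $\{0, v\}$ is a non-edge. Now I would argue connectivity of $G_R(p)^c$ in two stages: (a) the elements of $M$ form a clique in $G_R(p)^c$ (their pairwise differences lie in $M$, hence are non-units, hence non-$p$-th-powers), so $M$ is contained in one component; (b) for any $r \in R$, both $r \mapsto r + v$ type moves and $r \mapsto r + m$ moves (for $m \in M$, using that $r$ and $r+m$ differ by a non-edge) let us reach $M$: given $r \in R^\times$, the coset $r + M$ has size $|M| > 1$ and contains $r$ together with $r + m$; since differences within a coset of $M$ lie in $M \subseteq$ non-edges, each coset $r+M$ is a clique in $G_R(p)^c$; and multiplying by $v$, or rather translating, connects cosets — more carefully, I would show every coset $r + M$ contains a vertex adjacent in $G_R(p)^c$ to $0$, namely by picking within $r + M$ an element $s$ with $s \notin (R^\times)^p$ (possible since $(r+M) \cap (R^\times)^p$ has size at most $|M|/|1+M|\cdot(\text{something}) < |M|$ when $r$ is a unit, using Proposition~\ref{prop:unit_power}(2) which shows $p$-th powers reducing to a fixed residue are confined to a single coset of $p^2 R$, a proper subgroup of $M$ since $M \neq p^2 R$... this needs $M \neq p^2R$, which holds when $M \neq 0$ only if $p^2 R \neq M$; if instead $p^2 R = M$ one recurses).

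The main obstacle I anticipate is exactly this counting step (b): showing that every coset of $M$ contains a non-$p$-th-power (equivalently a $G_R(p)^c$-neighbor of $0$), which requires controlling the image and fibers of $\phi\colon (R^\times)^p \to k^\times$ precisely enough to see $(R^\times)^p$ meets each unit-coset of $M$ in a proper subset. Proposition~\ref{prop:unit_power}(2) handles the base case $p^2 R = 0$ cleanly (the fiber is a single point, so $(R^\times)^p$ is tiny), and for the general case I would set up an induction on the nilpotency length of $M$, reducing modulo $p^{n-1}R$ and lifting, or alternatively invoke Proposition~\ref{prop:connected} to reduce anticonnectivity questions along $R \to R/p^j$ — though anticonnectivity, unlike connectivity, is not obviously preserved under such quotients, so I would likely favor the direct counting/induction route. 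Once every coset $r+M$ (for $r$ a unit) is seen to contain a $0$-neighbor in $G_R(p)^c$, combined with (a) and the clique structure of cosets, connectivity of $G_R(p)^c$ follows immediately, completing the proof.
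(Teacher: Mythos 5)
Your proposal follows the same route as the paper: the field case gives $G_R(p)=K_{|R|}$, whose complement is edgeless; and when $R$ is not a field you show each coset of $M=pR$ is a clique in $G_R(p)^c$ and contains a non-$p$-th power, so every vertex lies within distance $2$ of $0$ in the complement. The one step you flag as the ``main obstacle'' --- that every unit coset $r+M$ meets $R\setminus(R^\times)^p$ when $p^2R\neq 0$ --- is in fact not an obstacle, and the induction on nilpotency length (and the ``recursion'' for the case $p^2R=M$) is unnecessary. Two observations close it. First, Proposition~\ref{prop:unit_power}(2) assumes only $M=pR$, not $p^2R=0$, so in complete generality all elements of $(r+M)\cap(R^\times)^p$ lie in a single coset of $p^2R$. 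Second, $p^2R$ is always a \emph{proper} subgroup of $pR=M$ when $M\neq 0$: if $pR=p^2R$ then $p=p^2c$ for some $c$, so $p(1-pc)=0$ with $1-pc$ a unit, forcing $p=0$ and hence $M=0$. Thus $\lvert (r+M)\cap(R^\times)^p\rvert\le \lvert p^2R\rvert<\lvert M\rvert$, the coset contains a non-$p$-th power, and the case ``$p^2R=M$'' you propose to recurse on cannot occur. (The paper phrases the same fact additively: $pR\setminus p^2R$ is nonempty, and for $a$ in it $y^p+a\notin(R^\times)^p$, since $\bar z=\bar y$ forces $z^p-y^p\in p^2R$.) One small slip elsewhere: for $p=2$, Proposition~\ref{prop:local_connected} forces $R$ to be a field of characteristic $2$, i.e.\ some $\F_{2^m}$, not necessarily $\F_2$; this does not affect your argument, since any field lands in the easy direction.
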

\begin{proof}
   If $R$ is a field then $G_R(p) = K_{|R|}.$ As a result, the complement of $G_R(p)$ is the empty graph $E_{|R|}$ and hence,$G_R(p)$ is not anticonnected. Conversely, suppose that $R$ is not a field. We claim that $G_R(p)$ is anticonnected. First of all, since $G_R(p)$ is connected, \cref{prop:connected} implies that $M=pR.$ We claim that $\varphi(R \setminus (R^{\times})^p) = k$ where $k = R/M=R/pR$ and $\varphi\colon R \to R/p$ is the canonical projection map. Let $\bar{x} \in k^{\times}$. By \cref{prop:unit_power}, we can find $y \in R^{\times}$ such that $\varphi(y^p)=\bar{x}.$ We claim that $y^p+a \not \in (R^{\times})^p$ for each $a \in pR \setminus p^2 R.$ We remark that, by definition $\varphi(y^p+a)=\bar{x}$, and $pR \setminus p^2 R\not=\emptyset$ (if $pR=p^2R$ then $p=p^2c$ for some $c\i R$, hence $p=0$ since $1-pc\in R^\times$, a contraction). Suppose to the contrary that $y^p+a = z^p$ for some $z \in R.$  Over $R/p$, we then have 
   \[ 0 = \bar{z}^p - \bar{y}^p = (\bar{z}-\bar{y})^p.\]
   Because $R/p$ is a field, we conclude that $\bar{z}=\bar{y}.$ In other words, we can write $z=y+pt$ for some $t \in R.$ We then have 
   \[ a = z^p - y^p = (y+pt)^p - y^p = p^2w\] 
   for some $w \in R.$ Because $a \in pR \setminus p^2R$, we can write $a=pb$ for some $b \in R^{\times}$. We then have $p(b-pw)=0$. Because $b-pw \in R^{\times}$, we conclude that $p=0$ and hence $pR=0.$ This is a contradiction since we assume that $R$ is not a field. 
\end{proof}

By \cref{prop:local_connected} and \cref{prop:local_anti_connected}, we have a complete answer to \cref{question:connected}. We now focus on \cref{question:prime}. For this question, we have the following key observation which is discovered through various experiments with Sagemath. 
\begin{prop} \label{prop:p^2}
Let $(R,M)$ be a finite commutative local ring such that $k=R/M$ has characteristics $p.$ Then $p^2R$ is a homogeneous set in ${\rm Cay}(R, (R^{\times})^p)$.     
\end{prop}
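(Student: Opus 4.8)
The plan is to show that for every $x \in R \setminus p^2R$, the element $x$ is adjacent either to all of $p^2R$ or to none of it; equivalently, that membership of $x - w$ in $(R^\times)^p$ does not change as $w$ ranges over $p^2R$. So fix $x \in R$ and $w \in p^2R$, and suppose first that $x \in (R^\times)^p$, say $x = z^p$ with $z \in R^\times$. I want to produce $z' \in R^\times$ with $x - w = (z')^p$ (or rather $x + w$, but since $p^2R$ is a group under addition the sign is irrelevant). The natural move is to look for $z'$ of the form $z' = z(1 + u)$ with $u \in pR$, since then $z' \in R^\times$ automatically; expanding, $(z')^p = z^p(1+u)^p = x \cdot (1 + pu + \binom{p}{2}u^2 + \cdots + u^p)$. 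Because $u \in pR$ we have $pu \in p^2 R$ and $u^j \in p^j R \subseteq p^2 R$ for $j \ge 2$, so $(z')^p = x + x \cdot (\text{element of } p^2R) = x + (\text{element of } p^2R)$, using that $x$ is a unit. Thus the map $u \mapsto (z')^p - x$ sends $pR$ into $p^2R$; the key point to check is that it is \emph{surjective} onto $p^2R$ — if so, then given any target $w \in p^2R$ we can solve for $u$ and conclude $x + w \in (R^\times)^p$.

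To get surjectivity, I would analyze the map $\psi \colon pR \to p^2R$, $u \mapsto (1+u)^p - 1 = pu + \binom{p}{2}u^2 + \cdots + u^p$. Modulo the relevant powers of $p$, the leading term is $pu$ when $p$ is odd (since then $\binom{p}{2}u^2 \in p^2 \cdot pR$ and lower-order in the $M$-adic filtration), while for $p = 2$ one has $(1+u)^2 - 1 = 2u + u^2$ — but by \cref{prop:local_connected} and \cref{cor:p_p_bipartite} the case $p=2$ with $G_R(p)$ connected forces $R = \F_2$, and if $G_R(p)$ is disconnected the whole question is handled by \cref{prop:prime_old} via connectedness; in any case $p^2R = 0$ or the statement is vacuous there, so I can essentially assume $p$ odd. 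For $p$ odd, I would show $\psi(pR) = p^2R$ by a filtration/counting argument: the induced map $pR/p^2R \to p^2R/p^3R$ given by $u \mapsto pu$ is surjective (multiplication by $p$ sends $pR$ onto $p^2R$), and then lift through the finite chain $p^2R \supseteq p^3R \supseteq \cdots \supseteq 0$ by a Newton/Hensel-type successive approximation, at each stage correcting the error using that $pu$ already hits every coset. Alternatively, one can avoid delicate lifting: since $p^2R$ is finite, it suffices to show $\psi$ is injective on a set of representatives, or to directly count $|\{(z')^p : z' \in R^\times, \, z' \equiv z \bmod pR\}|$ and compare with $|p^2R|$ via the structure of the $p$-power map on $1 + pR$.

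Here is the cleaner route I would actually take, bypassing surjectivity: it suffices to show that for $x \in R^\times$, $x \in (R^\times)^p \iff x + w \in (R^\times)^p$ for all $w \in p^2R$, and symmetrically the complementary statement for $x \notin (R^\times)^p$. For the forward direction with $x = z^p$: I claim $x + w \in (R^\times)^p$. Write $w = p^2 r$. Since $R^\times$ acts and $z^p$ is a unit, $x + w = z^p(1 + z^{-p} w)$ with $z^{-p}w \in p^2R$, so it is enough to show $1 + p^2 R \subseteq (R^\times)^p$ — i.e., every principal unit congruent to $1$ mod $p^2$ is a $p$-th power. This is a statement purely about the unit group $1 + pR$ of the local ring: the $p$-power map on $1+pR$ has image containing $1 + p^2R$. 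I would prove this by the binomial expansion above — $(1+u)^p \equiv 1 + pu \pmod{p^2 \cdot pR + \text{higher}}$ combined with the $M$-adic (equivalently $p$-adic, since $M = pR$ once we reduce to that case, or more carefully working in $R/p^n$) filtration on $1 + pR$ — showing the graded pieces of the $p$-power map are surjective onto the graded pieces of $1 + p^2R$, hence so is the map itself by finiteness. For the converse direction ($x \notin (R^\times)^p$ but $x + w \in (R^\times)^p$ for some $w \in p^2R$): apply the forward direction to $y := x+w$, getting $y + (-w) = x \in (R^\times)^p$, contradiction. \textbf{The main obstacle} I anticipate is handling the case where $p \mid \binom{p}{2}$ fails to improve the filtration — i.e., small primes, really just $p = 2$ — and making the $M$-adic versus $p$-adic bookkeeping precise when $M$ may strictly contain $pR$ (note the proposition does \emph{not} assume $M = pR$); I would deal with this by first reducing modulo a suitable power of $p$, or by noting that if $p^2 R \not\subseteq M^{\text{(something)}}$ the inclusion $1 + p^2R \subseteq (R^\times)^p$ still follows from the binomial argument applied in the $\mathfrak{a}$-adic filtration for $\mathfrak{a} = pR$, since all cross terms land in $p^2 \cdot pR \subseteq p^3 R$.
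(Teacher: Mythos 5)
Your main line of attack coincides with the paper's: reduce, via $x^p + w = x^p(1 + x^{-p}w)$ and the symmetry $w \mapsto -w$, to the single claim $1 + p^2R \subseteq (R^\times)^p$, and then prove that claim by a Hensel-type successive approximation along the chain $p^2R \supseteq p^3R \supseteq \cdots \supseteq 0$, using that $(1+u)^p \equiv 1 + pu$ modulo deeper terms and that multiplication by $p$ carries $p^{j}R$ onto $p^{j+1}R$. For odd $p$ this is sound: the cross terms $\binom{p}{k}u^k$ with $2 \le k \le p-1$ pick up an extra factor of $p$ from the binomial coefficient and $u^p$ lies deep enough in the filtration, and none of this requires $M = pR$, so your worry on that point is harmless.

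The genuine gap is your treatment of $p = 2$. The proposition assumes neither that $G_R(p)$ is connected nor that it is prime, so you cannot invoke \cref{prop:local_connected}, \cref{cor:p_p_bipartite} or \cref{prop:prime_old} to conclude that $R = \F_2$ or that ``the statement is vacuous'': homogeneity of $p^2R$ is a meaningful assertion for disconnected graphs, and $p^2R = 0$ does not follow from anything you cite. Moreover, this is exactly where the approximation breaks down: at the first step the term $\binom{2}{2}u^2 = u^2$ with $u \in 2R$ lands only in $4R$, not in $8R$, so $(1+u)^2 - 1 = 2u + u^2$ does not reduce to its leading term, and solving $c + c^2 \equiv a$ runs into the Artin--Schreier map, whose image has index $2$. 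This is not merely presentational: for $R = \Z_2[\zeta_8]/(\zeta_8-1)^{20}$ one has $-1 = (\zeta_8^2)^2 \in (R^\times)^2$ and $4R \neq 0$, yet the vertex $1$ is adjacent to $0 \in 4R$ (since $1$ is a square) but not to $4 \in 4R$ (since $1-4=-3$ lies in the square class of $5$, which generates the unramified quadratic extension of $\Q_2$, while $\Q_2(\zeta_8)/\Q_2$ is totally ramified); so $4R$ is not homogeneous there. Hence $p=2$ cannot be waved away and needs either an added hypothesis or a separate argument --- and, in fairness, the paper's own induction leaves the same term ($n=1$, $k=p=2$) untreated.
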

\begin{proof}
  By \cite[Proposition 4.4]{chudnovsky2024prime}, for $p^2R$ to be a homogeneous set in $G_R(p)$  we need to show that 
    \[ p^2 R + (R^{\times})^p \subseteq (R^{\times})^p.\]
Let $x \in (R^{\times})$ and $a \in R$, we claim that $x^p+p^2a \in (R^{\times})^p.$ From the equation
\[ x^p + p^2 a = x^p(1+p^2 (ax^{-p})),\]
we can assume that $x=1.$ While Hensel's lemma does not apply directly, we can modify it to fit our situation. Specifically, we claim that there for each $n \geq 1$, we can find $x_n \in R$ such that 
\[ x_n^p \equiv 1+p^2a \pmod{p^{n+1}}. \]
We remark the above congruence immediately implies that $x_n \in R^{\times}$. For $n=1$, we can take $x_1 = 1+ap$. In this case 
\[ x_1^p = (1+ap)^p = 1 + ap^2 + \sum_{k=2}^p {p \choose k } (ap)^k \equiv 1 +ap^2 \pmod{p^2}. \]
Suppose that the statement has been verified for all $n$. We claim that it is also true for $n+1.$ In fact, let $x_{n+1} = x_n + p^n b$ for some $b \in R.$ We then have 
\[ x_{n+1}^p = (x_n+p^n b)^p \equiv x_n^p+x_n^{p-1} p^{n+1} b  \equiv 1+p^2 a + (x_n^p-1-p^2a+x_n^{p-1} p^{n+1} b) \pmod{p^{n+2}}. \]
By the induction hypothesis, we can write $x_n^p-1-p^2a = p^{n+1}c_n$ for some $c_n \in R.$ If we take $b = -(x_n^{-1})^{p-1} c_n$ then 
\[ x_{n+1}^p \equiv 1 +p^2 a \pmod{p^{n+2}}.\]
Since $p$ is nilpotent in $R$, $p^n=0$ for some large enough $n.$ Therefore, we can find $n$ such that $x_{n}^p = 1+p^2a.$ This shows that $1+p^2a \in (R^{\times})^p.$
\end{proof}

By \cite[Corollary 4.2]{chudnovsky2024prime} and \cref{cor:size_of_R}, we have the following. 
\begin{cor}
\label{cor:wreathproduct}
For each $m \geq 1$, we define 
\[ R_m = R/p^m.\]
Then $G_R(p)$ is the wreath product of $G_{R_2}(p)$ and $E_n$ where $n=|p^2 R|$; i.e. (see \cref{def:wreath_product} for the definition of the wreath product of two graphs)
 \[ G_R(p) \cong G_{R_2}(p) \cdot E_{n}.\]
 Furthermore, $G_{R_2}(p)$ is a regular graph of degree $|R_1|-1.$
\end{cor}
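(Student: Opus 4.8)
The plan is to combine three ingredients already established in the excerpt: the homogeneity of $p^2 R$ inside ${\rm Cay}(R,(R^\times)^p)$ from Proposition \ref{prop:p^2}, the quotient description $R_2 = R/p^2 R$, and a structural result \cite[Corollary 4.2]{chudnovsky2024prime} that must say something like: if $I$ is an ideal of $R$ which is a homogeneous set in ${\rm Cay}(R,S)$ and all cosets of $I$ behave uniformly (i.e.\ $S$ meets $I$ trivially and $S + I = S$, so $S$ is a union of fibers of $R \to R/I$ minus possibly the zero fiber), then ${\rm Cay}(R,S) \cong {\rm Cay}(R/I, \bar S) \cdot E_{|I|}$. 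So the first step is to quote Proposition \ref{prop:p^2} to get that $p^2 R$ is homogeneous, and check the hypotheses of \cite[Corollary 4.2]{chudnovsky2024prime} for $I = p^2 R$ and $S = (R^\times)^p$: namely that $(R^\times)^p + p^2 R = (R^\times)^p$ (this is exactly the content of the proof of Proposition \ref{prop:p^2}) and that $(R^\times)^p$ contains no element of $p^2 R$ (clear, since elements of $p^2 R$ are non-units when $p^2 R \neq R$, which holds as $R$ is local with residue characteristic $p$). Invoking the cited corollary then immediately gives
\[ G_R(p) = {\rm Cay}(R,(R^\times)^p) \cong {\rm Cay}(R/p^2 R, \overline{(R^\times)^p}) \cdot E_{n}, \qquad n = |p^2 R|. \]

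The second step is to identify $\overline{(R^\times)^p}$ inside $R/p^2 R = R_2$ with $(R_2^\times)^p$, so that ${\rm Cay}(R/p^2R, \overline{(R^\times)^p}) = G_{R_2}(p)$. The inclusion $\overline{(R^\times)^p} \subseteq (R_2^\times)^p$ is automatic since the reduction map $R \to R_2$ is a surjective ring homomorphism sending units to units; for the reverse inclusion one needs that every unit of $R_2$ lifts to a unit of $R$, which holds because $R \to R_2$ is surjective with kernel $p^2 R$ contained in the Jacobson radical $M$, so a preimage of a unit is automatically a unit. Hence any $\bar u^p \in (R_2^\times)^p$ is the image of $u^p \in (R^\times)^p$, giving equality. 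This establishes $G_R(p) \cong G_{R_2}(p)\cdot E_n$.

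For the final sentence, I would use Corollary \ref{cor:size_of_R}: when we pass from $R_2 = R/p^2 R$ one sees that $M/p^2R = pR/p^2R$ is the maximal ideal of $R_2$ and $(pR/p^2R)^2 = p^2 R/p^2 R = 0$, so $R_2$ satisfies exactly the hypotheses "$M = pR$ and $p^2 R = 0$" of Corollary \ref{cor:size_of_R} (with $R_2$ in place of $R$). That corollary says the reduction $\phi\colon (R_2^\times)^p \to R_1^\times$ is a bijection, where $R_1 = R/pR = R_2/(pR/p^2R)$ is the residue field. Therefore the neighbors of $0$ in $G_{R_2}(p)$, which are exactly the elements of $(R_2^\times)^p$, are in bijection with $R_1^\times$, so the degree is $|R_1^\times| = |R_1| - 1$; since $G_{R_2}(p)$ is vertex-transitive (it is a Cayley graph) it is regular of this degree.

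I expect the only genuine friction to be bookkeeping: pinning down the exact statement of \cite[Corollary 4.2]{chudnovsky2024prime} and checking its hypotheses match the situation $I = p^2 R$, $S = (R^\times)^p$ — in particular confirming that $(R^\times)^p + p^2 R = (R^\times)^p$, which is precisely what the proof of Proposition \ref{prop:p^2} verified. Everything else is routine: the lifting-units argument and the application of Corollary \ref{cor:size_of_R} to $R_2$ are short and formal. There is no deep obstacle here; the corollary is essentially an assembly of the preceding results.
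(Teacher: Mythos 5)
Your proposal is correct and follows exactly the route the paper takes: the paper derives this corollary by combining Proposition \ref{prop:p^2} (homogeneity of $p^2R$) with \cite[Corollary 4.2]{chudnovsky2024prime} for the wreath-product decomposition and Corollary \ref{cor:size_of_R} for the degree count, which is precisely your assembly. The extra bookkeeping you supply (identifying $\overline{(R^\times)^p}$ with $(R_2^\times)^p$ via lifting units through the local ring, and applying Corollary \ref{cor:size_of_R} to $R_2$) is accurate and fills in details the paper leaves implicit.
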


\begin{expl}
    Figure \cref{fig:z_25} shows the graph $G_{\Z/25}(5).$ It is a regular graph of degree $4.$
\begin{figure}[H]
\centering
\includegraphics[width=0.4 \linewidth]{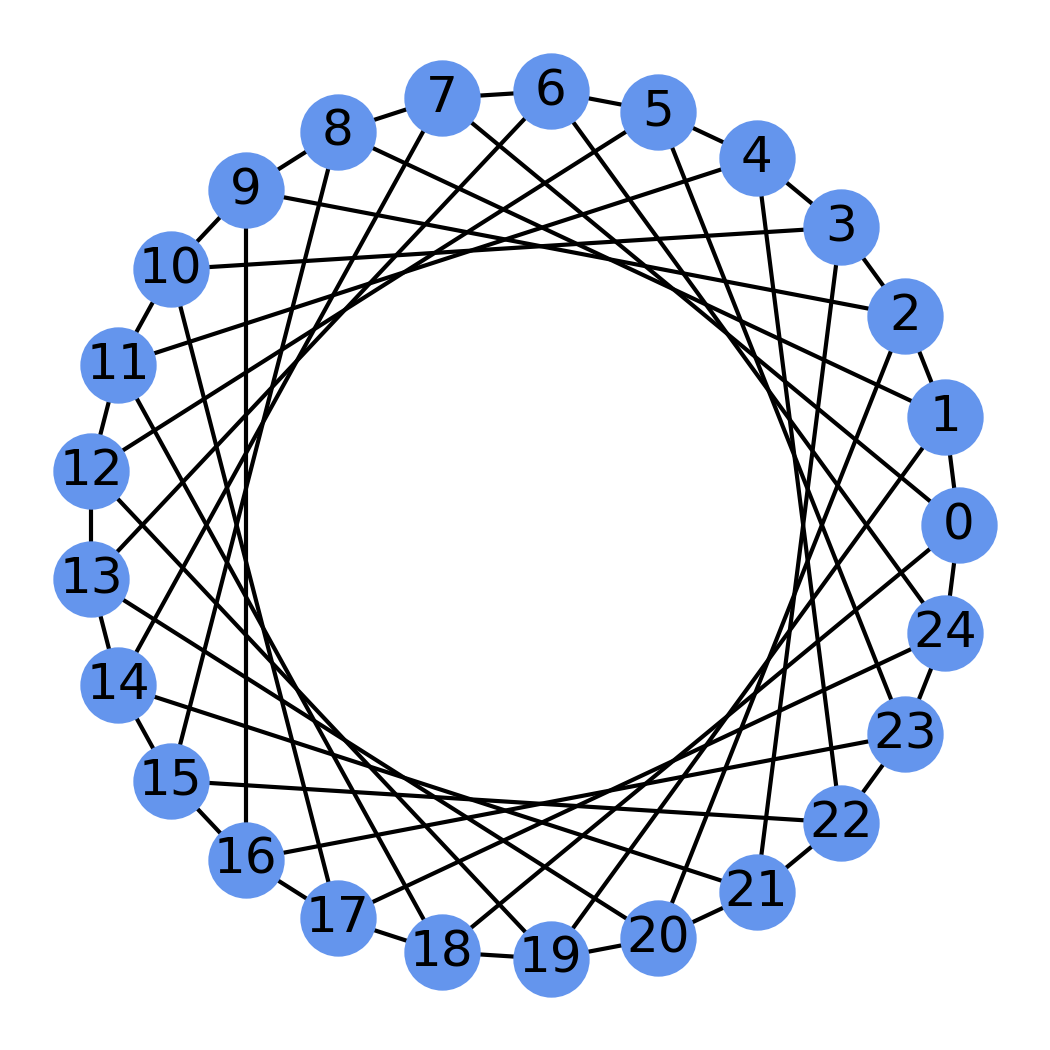}
\caption{The Cayley graph $G_{\Z/25}(5)$}
\label{fig:z_25}
\end{figure}
\end{expl}

We now show that $p^2R$, under mild conditions, is the maximal homogeneous set in $G_R(p).$
\begin{prop} \label{prop:condition}
Let $(R,M)$ be a finite commutative local ring such that $k=R/M$ has characteristics $p.$ Let $a \in M^2 \setminus M.$ Then 
\[ (a+ (R^{\times})^p) \cap (R^{\times})^p = \emptyset .\]
\end{prop}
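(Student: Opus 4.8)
The plan is to argue by contradiction. Assume the intersection is nonempty: then there are units $x,y\in R^\times$ with $y^p=a+x^p$, i.e. $a=y^p-x^p$, and I want to deduce $a\in M^2$, contradicting the assumption that $a\notin M^2$.

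First I would pass to the residue field $k=R/M$. Denoting by $\bar z$ the image of $z\in R$ in $k$ and using that $a\in M$, the relation $a=y^p-x^p$ reduces to $\bar y^{\,p}=\bar x^{\,p}$ in $k$. Since $k$ is a field of characteristic $p$, the Frobenius map $z\mapsto z^p$ on $k$ is injective, so $\bar x=\bar y$; equivalently, $y=x+m$ for some $m\in M$.

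Next I would substitute $y=x+m$ and expand by the binomial theorem in the commutative ring $R$:
\[
a \;=\; y^p-x^p \;=\; \sum_{j=1}^{p-1}\binom{p}{j}x^{p-j}m^j \;+\; m^p .
\]
For $1\le j\le p-1$ the integer $\binom{p}{j}$ is divisible by $p$, and since the residue field has characteristic $p$ we have $p\cdot 1_R\in M$, so $\binom{p}{j}\cdot 1_R\in M$; as $m^j\in M$ as well, each such term lies in $M^2$. The remaining term $m^p$ lies in $M^p\subseteq M^2$ because $p\ge 2$. Hence $a\in M^2$, the contradiction we wanted, and the proposition follows.

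I do not expect a genuine obstacle here. The only step that needs slight care is the linear term $p\,x^{p-1}m$: a priori it is only visibly in $pM$, so one must invoke $pR\subseteq M$ — which holds precisely because $k$ has characteristic $p$ — in order to place it in $M^2$. Everything else is a one-line computation together with the injectivity of the Frobenius on the field $k$.
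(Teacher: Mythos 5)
Your proof is correct and follows essentially the same route as the paper's: reduce modulo $M$ and use the freshman's dream (injectivity of Frobenius on $k$) to get $\bar{x}=\bar{y}$, write $y=x+m$ with $m\in M$, and note that every term of $(x+m)^p-x^p$ lies in $M^2$ because $p\cdot 1_R\in M$ and $m\in M$. (Both you and the paper silently read the hypothesis as $a\in M\setminus M^2$, which is clearly what is intended, since $M^2\setminus M$ as literally written is empty.)
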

\begin{proof}
The key argument for the proof of this proposition is somewhat similar to the one given in \cref{prop:local_anti_connected}. For the sake of completeness, we provide it here for the reader's convenience.  Let us assume to the contrary that $(a+ (R^{\times})^p) \cap (R^{\times})^p \neq \emptyset$. Then we can find $x, y \in R^{\times}$ such that\[ a+x^p = y^p.\] By projecting this equation over the residue field $R/M$ we see that $(\bar{x}-\bar{y})^p=0.$ Consequently, $\bar{x}=\bar{y}.$ In other words, we can write $y = x + m$ for some $m \in M.$ We then have 
    \[ a= y^p-x^p = (x+m)^p - x^p = pm \left(\sum_{k=1}^{p-1} \dfrac{{p \choose k}}{p} x^k m^{p-k-1} \right)+m^p \in M^2.\]
    This contradicts our assumption that $a \not \in M^2.$
\end{proof}
We have the following corollary. 
\begin{cor} \label{cor:p^2_max}
    Suppose that $I$ is a proper ideal in $R$ and $I$ is a homogeneous set in $G_R(p).$ Suppose further that $G_R(p)$ is connected. Then $I \subseteq p^2 R$. In other words, $p^2R$
 is the maximal ideal which is also a homogeneous set in $G_R(p).$
 \end{cor}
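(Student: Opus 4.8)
The plan is to argue by contradiction, feeding \cref{prop:condition} into the homogeneity criterion of \cite[Proposition 4.4]{chudnovsky2024prime}. First I would record what connectedness gives us: by \cref{prop:local_connected}, since $G_R(p)$ is connected we have $M=pR$, hence $M^2=p^2R$, so the desired conclusion $I\subseteq p^2R$ is literally the statement $I\subseteq M^2$.

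Next, suppose $I$ is a proper ideal of $R$ that is a homogeneous set in $G_R(p)$, and suppose for contradiction that $I\not\subseteq M^2$. Since $R$ is local, every proper ideal is contained in $M$, so $I\subseteq M$ and we may pick $a\in I$ with $a\in M\setminus M^2$. Now $0\in I$, and the vertex $1$ is adjacent to $0$ (because $1-0=1\in(R^\times)^p$) while $1\notin I$ (as $I$ is proper); hence, by \cite[Proposition 4.4]{chudnovsky2024prime}, $I+(R^\times)^p\subseteq(R^\times)^p$. (If one prefers to avoid the citation, the same conclusion follows directly from the definition of a homogeneous set together with $-1\in(R^\times)^p$.) In particular $a-1\in(R^\times)^p$, so $(a+(R^\times)^p)\cap(R^\times)^p\neq\emptyset$, which contradicts \cref{prop:condition} applied to $a\in M\setminus M^2$. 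Therefore $I\subseteq M^2=p^2R$.

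Finally, to upgrade this to the assertion that $p^2R$ is itself the maximal homogeneous ideal, I would note that $p\in M$ forces $p^2R\subseteq M\subsetneq R$, so $p^2R$ is a proper ideal, and it is a homogeneous set by \cref{prop:p^2}; combined with the containment just established, this is exactly the claimed maximality.

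I do not expect a genuine obstacle here: the substantive content has already been isolated in \cref{prop:condition} and the cited homogeneity criterion, and the only point needing a moment's care is that the element $a$ extracted from $I$ lies in $M\setminus M^2$, not merely in $I\setminus M^2$ — which is immediate from $I\subseteq M$ in the local ring $R$. (This is also where one should read the hypothesis of \cref{prop:condition} in the form $a\in M\setminus M^2$, which is what its proof actually uses.)
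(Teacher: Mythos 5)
Your argument is correct and matches the paper's own proof in all essentials: both rest on the homogeneity criterion $I+(R^\times)^p\subseteq(R^\times)^p$, on \cref{prop:condition} (read, as you rightly note, with the hypothesis $a\in M\setminus M^2$), and on \cref{prop:local_connected} to convert connectedness into $M=pR$, hence $M^2=p^2R$. The only difference is cosmetic --- you phrase it as a contradiction while the paper argues directly that every $a\in I$ lies in $M^2$ --- so nothing further is needed.
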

 \begin{proof}
     Because $I$ is a homogeneous set, by \cite[Proposition 4.4]{chudnovsky2024prime} we know that 
     \[ I + (R^{\times})^p \subseteq (R^{\times})^p. \]
     By \cref{prop:condition}, we know that for each $a \in I$, $a \in M^2.$ Because $G_R(p)$ is connected, by \cref{prop:local_connected}, we know that $M=pR.$ As a result, $a \in p^2R.$ Since this is true for all $a \in I$, we conclude that $I \subseteq p^2R.$
 \end{proof}
\begin{rem}
    We remark that we do not require $G_R(p)$ to be anticonnected in the proof of \cref{cor:p^2_max}.
\end{rem}
\begin{thm} \label{thm:prime_local_p_p}
Let $(R,M)$ be a finite commutative local ring such that $k=R/M$ has characteristics $p.$ Then ${\rm Cay}(R, (R^{\times})^p)$ is a prime graph if and only if the following conditions hold. 
\begin{enumerate}
    \item $R$ is not a field.
    \item $M=pR$. 
    \item $p^2R =0.$
\end{enumerate}

\begin{proof}
    First, let us assume that $G_R(p)$ is a prime graph. Then $G_R(p)$ must be connected and anticonnected. By \cref{prop:local_connected}, and \cref{prop:local_anti_connected} we conclude that $M=pR$ and $R$ is not a field. Furthermore, by \cref{prop:p^2}, $p^2R$ is a homogeneous set in $G_R(p).$ Because $G_R(p)$ is prime, we must have $p^2R=0.$

    Conversely, suppose that the three conditions above are satisfied. We claim that $G_R(p)$ is prime. First, \cref{prop:local_connected}, and \cref{prop:local_anti_connected} shows that $G_R(p)$ is connected and anticonnected. Suppose that $G_R(p)$ is not prime. 
    By \cref{prop:prime_old}, we know that there exists a proper ideal $I$ in $R$ such that $I \neq 0$ and $I$ is a homogeneous set in $G_R(p).$ By \cref{cor:p^2_max}, we know that $I \subseteq p^2R.$ By our assumption $p^2R=0$ and therefore $I=0.$ This is a contradiction. We conclude that $G_R(p)$ is a prime graph. 
\end{proof}
\end{thm}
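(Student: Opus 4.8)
The theorem is essentially a repackaging of the structural results already assembled in this section, so the plan is to split into the two implications and feed the pieces together. For the forward direction, assume $G_R(p)$ is prime. A prime graph on more than two vertices is connected and anticonnected, and since $|R|>2$ (as $R$ is not a field would follow, but even if $|R|=2$ the graph $K_2$ is prime only trivially) we may invoke the connectivity machinery. From connectivity and \cref{prop:local_connected} we get $M=pR$; from anticonnectivity and \cref{prop:local_anti_connected} we get that $R$ is not a field. Finally \cref{prop:p^2} says $p^2R$ is always a homogeneous set, so primeness forces it to be trivial, i.e. $p^2R=0$ (it cannot be all of $V(G)$ since $p$ is nilpotent and $p\in M\neq R$). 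This yields conditions $(1)$--$(3)$.

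For the converse, assume $(1)$--$(3)$. First apply \cref{prop:local_connected} and \cref{prop:local_anti_connected} (using $M=pR$ together with $R$ not a field) to conclude $G_R(p)$ is connected and anticonnected, so the hypotheses of \cref{prop:prime_old} are met. Now suppose for contradiction that $G_R(p)$ is not prime. By \cref{prop:prime_old} there is a nontrivial ideal $I\subseteq R$ which is a homogeneous set, contained in the Jacobson radical $M$; in particular $I$ is a proper nonzero ideal. By \cref{cor:p^2_max}, any such proper ideal that is homogeneous must lie inside $p^2R$, which is $0$ by hypothesis $(3)$. Hence $I=0$, contradicting nontriviality. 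Therefore $G_R(p)$ is prime.

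I expect no genuine obstacle here: every nontrivial input — the dichotomy for connectivity ($M=pR$), the dichotomy for anticonnectivity ($R$ not a field), the fact that $p^2R$ is always homogeneous, and the maximality statement \cref{cor:p^2_max} — has already been proved in this section, and \cref{prop:prime_old} supplies the reduction from "not prime" to "has a homogeneous ideal in the radical." The only points requiring a moment of care are the edge cases: checking that $p^2R$ is a \emph{proper} homogeneous set (so that primeness really does force it to vanish rather than being vacuously allowed), which follows because $p\in M$ is nilpotent and nonzero when $R$ is not a field with $p^2R\neq 0$; and confirming that $|V(G_R(p))|=|R|>2$ so that the "connected and anticonnected" criterion for prime graphs genuinely applies rather than the degenerate $K_2$ case. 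Both are immediate given conditions $(1)$--$(2)$.
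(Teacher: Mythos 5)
Your proposal is correct and follows essentially the same route as the paper: the forward direction uses \cref{prop:local_connected}, \cref{prop:local_anti_connected}, and \cref{prop:p^2}, and the converse combines \cref{prop:prime_old} with \cref{cor:p^2_max} exactly as in the paper's argument. The extra attention you pay to the edge cases (that $p^2R$ is a proper, non-singleton homogeneous set when nonzero, and that $|R|>2$) is a harmless elaboration of points the paper leaves implicit.
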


\begin{rem}
    A particular example of rings that satisfy the conditions of \cref{thm:prime_local_p_p} is the class of Galois rings (see \cite[Section 6.1]{bini2012finite})
    \[ R = GR(p^2, r)= \Z[x]/(p^2,f(x)), \]
    where $f(x) \in \Z[x]$ is an irreducible polynomial modulo $p.$
\end{rem}

\begin{rem}
    In general, a ring that satisfies the conditions of \cref{thm:prime_local_p_p} must be a quotient ring of the Laurent series over a Cohen ring (see \cite[Theorem 10.160.8]{stacks-project}). 
\end{rem}

\subsection{Induced subgraphs of $G_R(p)$} \label{induced_p_p}

In \cref{sec:induced_l_p} we study various induced subgraphs of $G_R(p)$ where $R$ is a field of characteristics $\ell \neq p.$ In this section, we study a similar problem for $G_R(p)$ where $R$ is a local ring of residue characteristics $p.$ Since our main interest lies in the case $G_R(p)$ is connected, we will make that assumption throughout this section. By \cref{prop:connected}, this would imply that $M=pR.$ Furthermore, in the case $p=2$, $G_R(p) = K_{|R|}$ and hence our problem is rather trivial in this case. Therefore, we will also assume that $p \geq 3.$ 

\begin{lem} \label{lem:K_3_induced}
     The following conditions are equivalent. 
     \begin{enumerate}
     \item $K_3$ is an induced subgraph of $G_R(p).$ 
     \item There exists $a \in R^{\times}$ such that the induced subgraph on $\{0, 1, -a^p \}$ is $K_3.$ In other words, there exists $a \in R^{\times}$ such that $1+a^p \in (R^{\times})^p.$
     \end{enumerate}
\end{lem}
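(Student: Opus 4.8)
The plan is to prove the equivalence of the two conditions in Lemma~\ref{lem:K_3_induced} by exploiting the vertex-transitivity and the multiplicative structure of $G_R(p)$, just as in the proofs of \cref{lem:K_3} and \cref{thm:induced_graph}. The implication $(2)\Rightarrow(1)$ is essentially trivial, so the content is in $(1)\Rightarrow(2)$.

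First I would observe that $G_R(p)$ is vertex-transitive: for any $r\in R$, translation $x\mapsto x+r$ is a graph automorphism, since adjacency depends only on differences. Hence if $K_3$ occurs as an induced subgraph on some triple $\{u,v,w\}$, we may translate by $-u$ to assume one of the vertices is $0$, so the triple becomes $\{0, b, c\}$ with $b,c,\,b-c\in (R^\times)^p$ (all three differences must be invertible $p$-th powers for the induced subgraph to be $K_3$). Next I would use the multiplicative automorphisms: for any unit $u\in (R^\times)^p$, the map $x\mapsto ux$ is a graph automorphism fixing $0$, since it permutes $(R^\times)^p$. Writing $b = s^p$ for some $s\in R^\times$, apply the automorphism $x\mapsto s^{-p}x$; this sends the triple $\{0,b,c\}$ to $\{0, 1, c'\}$ where $c' = s^{-p}c \in (R^\times)^p$ and $1 - c' = s^{-p}(b-c)\in (R^\times)^p$. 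So we have reduced to: there exists $c'\in (R^\times)^p$, say $c' = t^p$, with $1 - t^p \in (R^\times)^p$.

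To finish, I would recast this in the form stated in $(2)$. We have $t^p\in(R^\times)^p$ and $1-t^p\in(R^\times)^p$; I want $a\in R^\times$ with $1+a^p\in(R^\times)^p$. Since $-1\in(R^\times)^p$ by the standing hypothesis (write $-1 = \epsilon^p$), set $a = \epsilon t$, so that $a^p = \epsilon^p t^p = -t^p$, giving $1 + a^p = 1 - t^p \in (R^\times)^p$, and $a\in R^\times$ since $\epsilon, t\in R^\times$. This shows the triple $\{0, 1, -a^p\}$ realizes $K_3$: indeed $0-1 = -1 = \epsilon^p\in(R^\times)^p$, $0-(-a^p) = a^p\in(R^\times)^p$, and $1-(-a^p) = 1+a^p\in(R^\times)^p$. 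Conversely, if such an $a$ exists, the same three computations show $\{0,1,-a^p\}$ induces $K_3$, so $(2)\Rightarrow(1)$.

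I do not expect any serious obstacle here; the lemma is a bookkeeping statement that normalizes an arbitrary $K_3$ to a standard position using the transitive group of automorphisms (translations together with multiplication by invertible $p$-th powers), exactly as in the classical Paley-graph arguments recalled above. The only point requiring a little care is keeping track of which differences must lie in $(R^\times)^p$ and correctly absorbing the sign via the hypothesis $-1\in(R^\times)^p$; but since $p$ is assumed odd in this section, one could even avoid that by noting $(-1)^p = -1$ so that $-a^p = (-a)^p$, making the substitution $a\mapsto -a$ transparent.
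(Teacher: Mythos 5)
Your proof is correct and takes essentially the same route as the paper: normalize an arbitrary induced $K_3$ by translating one vertex to $0$ and scaling by the inverse of a pairwise difference lying in $(R^\times)^p$ (an automorphism of $G_R(p)$), then absorb the sign using $-1\in(R^\times)^p$ or the oddness of $p$. The paper does the two normalizations in a single affine map $x\mapsto (x-u_1)/(u_2-u_1)$, but the argument is the same.
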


\begin{proof}
Clearly $(2)$ implies $(1)$. Let us show that $(1)$ implies $(2)$ as well.   By assumption, there exists $u_1, u_2, u_3 \in R$ such that the induced graph on $\{u_1, u_2, u_3 \}$ is $K_3.$ By definition, $u_2-u_1 \in (R^{\times})^p$ and $u_1-u_3 \in (R^{\times})^p$. Hence there exists $a\in R^\times$ such that $a= \left(\dfrac{u_1-u_3}{u_2-u_1}\right)^p$. We can then see that the induced subgraph on 
    \[ \left\{ \frac{u_1-u_1}{u_2-u_1}, \frac{u_2-u_1}{u_2-u_1}, \frac{u_3-u_1}{u_2-u_1} \right\} =\left\{0, 1, \frac{u_3-u_1}{u_2-u_1}\right\}=\left\{0,1,-a^p \right\}\]
    is also $K_3.$ %We can then take $a= -\dfrac{u_3-u_1}{u_2-u_1}$ (here we use the fact that $-1 \in (R^{\times})^p.$)
\end{proof}

Inspired by \cref{lem:K_3_induced}, we define the following polynomial 
\[ f_p(x) = (1+x)^p-x^p-1 \in \Z[x].\]
The introduction of $f_p(x)$ is suggested to us by Dr. Ha Duy Hung. We thank him for sharing this idea. 

\begin{expl}
    Here are some examples of $f_p(x)$ for small $p.$ 
    \[ f_3(x) = 3x(x+1).\]
    \[ f_5(x) = 5 x(x + 1) (x^2 + x + 1) ,\] 
    \[ f_7(x) = 7  x (x + 1) (x^2 + x + 1)^2 ,\] 
    \[ f_{11}(x) = 11 x (x + 1) (x^2 + x + 1) (x^6 + 3x^5 + 7x^4 + 9x^3 + 7x^2 + 3x + 1).\]
    
\end{expl}

\begin{prop} \label{prop:root_f_p}
    $K_3$ is an induced subgraph of $G_R(p)$ if and only there exists $a \in R^{\times}$ such that $a+1 \in R^{\times}$ and $f_p(a)=0.$
\end{prop}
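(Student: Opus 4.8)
The plan is to combine \cref{lem:K_3_induced} with the elementary observation that the condition ``$1 + a^p \in (R^\times)^p$'' can be re-expressed using the polynomial $f_p$. Recall from \cref{lem:K_3_induced} that $K_3$ is an induced subgraph of $G_R(p)$ if and only if there exists $a \in R^\times$ with $1 + a^p \in (R^\times)^p$; I would start from this characterization and massage the quantity $1 + a^p$.

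The key step is the following reformulation. Suppose $b \in R^\times$ is such that $b + 1 \in R^\times$ and $f_p(b) = 0$; that is, $(1+b)^p = b^p + 1$. Setting $a = 1/b \in R^\times$, we get $1 + a^p = 1 + b^{-p} = b^{-p}(b^p + 1) = b^{-p}(1+b)^p = ((1+b)/b)^p = (1 + a)^p \in (R^\times)^p$, using that $1 + b \in R^\times$ forces $(1+b)/b \in R^\times$. Hence by \cref{lem:K_3_induced}, $K_3$ is an induced subgraph of $G_R(p)$. Conversely, if $K_3$ is an induced subgraph, then by \cref{lem:K_3_induced} there is $x \in R^\times$ with $1 + x^p \in (R^\times)^p$, say $1 + x^p = y^p$ with $y \in R^\times$. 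I would then set $a = 1/x$ (or an appropriate ratio) so that dividing the relation $y^p = x^p + 1$ by $x^p$ yields $(y/x)^p = 1 + a^p = 1 + (1/x)^p$; one checks $(y/x)^p = (1 + a)^p$ would require $y/x = 1 + a$ up to a $p$-th root of unity, so a small amount of care is needed here — one takes $b$ to be the reciprocal of whatever generates the triangle and verifies $1 + b \in R^\times$ (which follows because $1 + b$ equals a unit times $y$ or $x$) and $f_p(b) = 0$ directly from the multiplicative relation.

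The main obstacle — and the only subtlety — is matching up the $p$-th power structure carefully: from $1 + a^p \in (R^\times)^p$ one only immediately gets $1 + a^p = c^p$ for some unit $c$, and one must argue that $c$ can be chosen to equal $1 + a$ (equivalently that $f_p(a) = 0$ for a suitable choice). The cleanest route is probably to avoid this by working directly with the element produced by \cref{lem:K_3_induced}: that lemma gives $a \in R^\times$ with $1 + a^p \in (R^\times)^p$, and I would re-run its proof's normalization to land on an $a$ for which $1 + a^p$ is literally $(1+a)^p$ — i.e. observe that in the triangle $\{0, 1, -a^p\}$ the edge between $1$ and $-a^p$ means $1 + a^p \in (R^\times)^p$, and by translating/scaling the triangle by the third vertex one obtains the symmetric form, forcing the $p$-th power to be $(1+a)^p$. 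Then $f_p(a) = (1+a)^p - a^p - 1 = 0$ and $a + 1 \in R^\times$ because it is (a unit times) an edge difference. This reduces everything to bookkeeping with the three vertices of the triangle.
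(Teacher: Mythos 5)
Your forward direction is fine (though the passage to $a = 1/b$ is unnecessary: $f_p(a)=0$ already says $1+a^p=(1+a)^p\in(R^\times)^p$ directly). The converse, however, has a genuine gap, and it is exactly at the point you flag as "the main obstacle." From \cref{lem:K_3_induced} you get $1+a^p=b^p$ for some units $a,b$, and you need $b^p=(1+a)^p$, i.e.\ $f_p(a)=0$. Your proposed fix — "translating/scaling the triangle by the third vertex ... forcing the $p$-th power to be $(1+a)^p$" — does not work: affine normalizations of the triangle only ever reproduce statements of the form "some difference of vertices is a $p$-th power of \emph{some} unit," and no amount of vertex bookkeeping identifies that unit with $1+a$. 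Indeed, the statement is false outside the standing hypotheses of this section: in $\F_{13}$ with $p=3$ the graph $G_{\F_{13}}(3)$ contains triangles, yet $f_3(x)=3x(x+1)$ has no root $a$ with $a$ and $a+1$ both units. So any correct proof must use that $R$ is local of residue characteristic $p$ with $M=pR$; a purely combinatorial argument cannot succeed.

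The paper closes the gap arithmetically. First, by \cref{cor:wreathproduct} one reduces to the case $p^2R=0$. Then, reducing $1+a^p=b^p$ modulo $pR$ and using that Frobenius is additive in characteristic $p$ and that $R/pR$ is a field, one gets $(b-(1+a))^p\equiv 0$, hence $b\equiv 1+a\pmod{pR}$. Writing $b=(1+a)+pc$ and expanding, $b^p\equiv(1+a)^p\pmod{p^2R}$, and since $p^2R=0$ this gives $b^p=(1+a)^p$, i.e.\ $f_p(a)=0$; the congruence $b\equiv 1+a$ also gives $1+a\in R^\times$. You would need to supply these three steps (reduction to $p^2R=0$, the Frobenius argument mod $p$, and the binomial upgrade to mod $p^2$) to make your converse direction complete.
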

\begin{proof}
    If $a \in R^{\times}$ such that $1+a \in R^{\times}$ and $f_p(a)=0$ then the induced graph on $\{0, 1, -a^p \}$ is $K_3.$ Converesly, suppose that $K_3$ is an induced subgraph of $G_R(p).$ By \cref{lem:K_3_induced}, we can find $a \in R^{\times}$ such that 
    \[ 1+ a^p = b^p,\]
    for some $b \in R^{\times}.$ By \cref{cor:wreathproduct},%\cref{prop:p^2},
    we can assume that $p^2R=0$. By taking this equation over $R/pR$, we conclude that $b \equiv 1 +a \pmod{p}.$ Consequently 
    \[ b^p \equiv (1+a)^p \pmod{p^2R}.\]
    Since $p^2R=0$, we conclude that $b^p=(1+a)^p$ and that $f_p(a)=0.$ We remark that since $b \equiv 1+a \pmod{pR}$ and $b \in R^{\times}$, $1+a \in R^{\times}$ as well. 
    \end{proof}
Let us next discuss a particular factor of $f_p(x).$ 

\begin{lem} \label{lem:phi_3}
    Suppose that $p>3.$ Let $m$ be the multiplicity of $x^2+x+1$ in $f_p(x).$ Then 

    \[
 m= 
\begin{cases}
    1 & \text{if } p \equiv 2 \pmod{3} \\
    2 & \text{else.} 
\end{cases}
\]
\end{lem}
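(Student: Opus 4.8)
The plan is to compute the multiplicity of $x^2+x+1$ in $f_p(x) = (1+x)^p - x^p - 1$ by working over $\Z[\omega]$, where $\omega$ is a primitive cube root of unity, i.e. a root of $x^2+x+1$. Since $x^2+x+1$ is separable (it has distinct roots $\omega, \omega^2$ in characteristic $0$), the multiplicity $m$ of $x^2+x+1$ as a factor of $f_p(x)$ in $\Z[x]$ (equivalently $\Q[x]$) equals the order of vanishing of $f_p$ at $x=\omega$, which in turn equals $1 + \#\{\, j \geq 1 : f_p^{(i)}(\omega)=0 \text{ for } 0 \le i \le j \,\}$. So first I would record the elementary fact that $f_p(\omega)=0$ always (since $1+\omega = -\omega^2$, so $(1+\omega)^p = -\omega^{2p}$, and one checks $-\omega^{2p} - \omega^p - 1 = 0$ using $\omega^3=1$: the value of $(\omega^p,\omega^{2p})$ is either $(1,1)$, $(\omega,\omega^2)$, or $(\omega^2,\omega)$ according to $p \bmod 3$, and in each case the sum is $0$). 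This shows $m \ge 1$ unconditionally for any prime $p$; note also $1+\omega = -\omega^2 \in \Z[\omega]^\times$, consistent with \cref{prop:root_f_p}.

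Next I would compute $f_p'(x) = p(1+x)^{p-1} - p x^{p-1} = p\big((1+x)^{p-1} - x^{p-1}\big)$ and evaluate at $x = \omega$. Using $1+\omega = -\omega^2$ again, $f_p'(\omega) = p\big((-\omega^2)^{p-1} - \omega^{p-1}\big) = p\big(\omega^{2(p-1)} - \omega^{p-1}\big)$ (the sign $(-1)^{p-1}=1$ since $p$ is odd). Now $\omega^{2(p-1)} - \omega^{p-1} = \omega^{p-1}(\omega^{p-1} - 1)$, and $\omega^{p-1} = 1$ iff $3 \mid p-1$ iff $p \equiv 1 \pmod 3$. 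Hence: if $p \equiv 2 \pmod 3$, then $f_p'(\omega) \neq 0$, so $m = 1$; if $p \equiv 1 \pmod 3$, then $f_p'(\omega) = 0$, so $m \ge 2$. Since $p > 3$ is prime, these are the only two cases ($p \equiv 0 \pmod 3$ forces $p = 3$, excluded), matching the dichotomy in the statement — the case $p \equiv 2 \pmod 3$ gives $m=1$ and the case $p \equiv 1 \pmod 3$ ("else") gives $m = 2$.

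It then remains to show that when $p \equiv 1 \pmod 3$ the multiplicity is exactly $2$, i.e. $f_p''(\omega) \neq 0$. Here $f_p''(x) = p(p-1)\big((1+x)^{p-2} - x^{p-2}\big)$, so $f_p''(\omega) = p(p-1)\big((-\omega^2)^{p-2} - \omega^{p-2}\big) = p(p-1)\big((-1)^{p-2}\omega^{2(p-2)} - \omega^{p-2}\big)$. Since $p$ is odd, $(-1)^{p-2} = -1$, so $f_p''(\omega) = -p(p-1)\big(\omega^{2(p-2)} + \omega^{p-2}\big)$. Now with $p \equiv 1 \pmod 3$ we have $p - 2 \equiv 2 \pmod 3$, so $\omega^{p-2} = \omega^2$ and $\omega^{2(p-2)} = \omega^{4} = \omega$; thus $\omega^{2(p-2)} + \omega^{p-2} = \omega + \omega^2 = -1 \neq 0$. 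Therefore $f_p''(\omega) = p(p-1) \neq 0$ in $\Z[\omega] \subseteq \C$, so the multiplicity of $x^2+x+1$ in $f_p$ is exactly $2$ when $p \equiv 1 \pmod 3$. The main obstacle — really just bookkeeping — is keeping the powers of $\omega$ straight via reduction of exponents mod $3$ together with the sign $(-1)^{p-k}$; none of this is deep, but one should state clearly at the outset the equivalence between "multiplicity of $x^2+x+1$ in $f_p$" and "order of vanishing of $f_p$ at $\omega$", which rests on $x^2+x+1$ being irreducible and separable over $\Q$, so that $(x^2+x+1)^m \mid f_p(x)$ in $\Q[x]$ iff $f_p^{(i)}(\omega) = 0$ for all $0 \le i \le m-1$, and then invoke Gauss's lemma to descend the factorization from $\Q[x]$ back to $\Z[x]$.
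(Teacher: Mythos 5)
Your proposal is correct and follows essentially the same route as the paper: both evaluate $f_p$, $f_p'$, and $f_p''$ at a primitive cube root of unity, using $1+\zeta_3=-\zeta_3^2$ to reduce everything to powers of $\zeta_3$ modulo $3$, concluding $m=1$ when $p\equiv 2\pmod 3$ and $m=2$ when $p\equiv 1\pmod 3$. Your write-up is in fact slightly more careful than the paper's in spelling out why the multiplicity of $x^2+x+1$ equals the order of vanishing at $\zeta_3$ and in tracking the sign $(-1)^{p-2}$ in the second-derivative computation.
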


\begin{proof}
    Let $\zeta_3$ be a primitive $3$-root of unity. Then the minimal polynomial of $\zeta_3$ is $\Phi_3(x) = x^2+x+1$ and $m$ is the multiplicity of $\zeta_3$ in $f_p(x).$ Therefore, in order to calculate $m$, we need to study $f_{p}^{(k)}(\zeta_3)$ for $k \geq 0.$ First, we observe that  
    \[ \zeta_3+1 = -\zeta_3^2.\]
    Consequently 
    \[ f_p(\zeta_3) = (-\zeta_3^2)^p-\zeta_3-1 =-(\zeta_3^{2p}+\zeta_3^p+1) = \zeta_3^2+\zeta_3+1=0.\]
    Similarly, we have 
    \[ f_p'(\zeta_3) = p [(-\zeta_3^2)^{p-1} - \zeta_3^{p-1}] = p \zeta_{3}^{p-1}(\zeta_{3}^{p-1}-1).\]
    We can see that $f_p'(\zeta_3)=0$ if and only if $p \equiv 1 \pmod{3}.$ Finally, assume now that $p \equiv 1 \pmod{3}$ then 
    \[ f_p''(\zeta_3) = p(p-1) [(-\zeta_3^2)^{p-2} - \zeta_3^{p-2}] = p (p-1) \neq 0.
    \qedhere\]
\end{proof}

\begin{cor} \label{cor:factor_f_p}
   Suppose that $p>3$. There exists a polynomial $g_p(x) \in \Z[x]$ such that 
    \[ f_p(x) = p x(x+1) (x^2+x+1)^m g_p(x),\]
    with $g(\zeta_3) \neq 0.$ Here 
        \[
 m= 
\begin{cases}
    1 & \text{if } p \equiv 2 \pmod{3} \\
    2 & \text{if } p \equiv 1 \pmod{3}. 
\end{cases}
\]
\end{cor}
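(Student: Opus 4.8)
The plan is to derive the factorization directly from \cref{lem:phi_3} together with the explicit factor $px(x+1)$ already visible in $f_p(x)$. First I would observe that $f_p(x) = (1+x)^p - x^p - 1$ has $x$ and $x+1$ as factors: indeed $f_p(0) = 1 - 0 - 1 = 0$ and $f_p(-1) = 0 - (-1)^p - 1 = 1 - 1 = 0$ since $p$ is odd, so $x(x+1) \mid f_p(x)$ in $\Z[x]$. Moreover, writing $f_p(x) = p\cdot\frac{(1+x)^p - x^p - 1}{p}$, the standard fact that $\binom{p}{k}/p \in \Z$ for $1 \le k \le p-1$ shows that every coefficient of $(1+x)^p - x^p - 1$ is divisible by $p$, so $f_p(x) = p\, h(x)$ for some $h(x) \in \Z[x]$; since $\Z[x]$ is a UFD and $x, x+1, p$ are pairwise non-associate primes (well, $p$ is a prime constant), we get $f_p(x) = p\, x(x+1)\, q(x)$ for some $q(x) \in \Z[x]$.

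Next I would invoke \cref{lem:phi_3}: the multiplicity of the irreducible polynomial $\Phi_3(x) = x^2 + x + 1$ as a factor of $f_p(x)$ (equivalently, the multiplicity of $\zeta_3$ as a root of $f_p$, since $\Phi_3$ is irreducible over $\Q$) is exactly $m$, where $m = 1$ if $p \equiv 2 \pmod 3$ and $m = 2$ if $p \equiv 1 \pmod 3$. Note $x^2 + x + 1$ is coprime to $x$ and to $x+1$ (it has no common root with either, since $\zeta_3 \notin \{0, -1\}$), hence $\Phi_3^m$ divides $q(x)$ exactly: write $q(x) = (x^2+x+1)^m g_p(x)$ with $g_p(x) \in \Z[x]$ and $\Phi_3 \nmid g_p$, i.e. $g_p(\zeta_3) \neq 0$. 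The requirement $g_p \in \Z[x]$ rather than $\Q[x]$ follows from Gauss's lemma, since $x^2+x+1$ is monic and primitive. Combining, $f_p(x) = p\, x(x+1)(x^2+x+1)^m g_p(x)$ with $g_p(\zeta_3) \neq 0$, which is exactly the claim.

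The only mild subtlety — and the step I would be most careful about — is bookkeeping the exact multiplicities so that the power of $\Phi_3$ pulled out of $q(x)$ really matches the $m$ from \cref{lem:phi_3}, and confirming that $\Phi_3$ contributes nothing hidden inside the factors $x$ or $x+1$. Since $\zeta_3 \ne 0$ and $\zeta_3 \ne -1$, the factors $x$, $x+1$, $x^2+x+1$ are pairwise coprime in $\Z[x]$, so the total $\Phi_3$-adic valuation of $f_p$ is carried entirely by the $(x^2+x+1)$-part of $q$; \cref{lem:phi_3} says this valuation is $m$, giving the clean factorization. No genuine obstacle arises; the argument is a routine UFD/Gauss-lemma extraction once \cref{lem:phi_3} is in hand.
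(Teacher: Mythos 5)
Your proposal is correct and follows exactly the route the paper intends: the corollary is a direct consequence of \cref{lem:phi_3}, and you supply the routine supporting facts (that $p$ divides every coefficient of $f_p$ via $\binom{p}{k}\equiv 0\pmod p$, that $x$ and $x+1$ are roots, that $x$, $x+1$, $x^2+x+1$ are pairwise coprime, and that dividing by the monic polynomial $(x^2+x+1)^m$ keeps $g_p$ in $\Z[x]$). The paper states the corollary without proof, so your careful bookkeeping of the $\Phi_3$-adic valuation is a welcome fleshing-out of the same argument.
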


\begin{rem}
    We have verified that $g_p(x)$ is irreducible for all $p <10^4.$ We wonder whether this is the case for all $p$. 
\end{rem}
We have the following partial observation. 
\begin{lem}
If $a \in \C$ is a repeated root of $f_p(x)$, then $a^2+a+1=0.$ Consequently, $g_p(x)$ is separable over $\Z[x].$ 
\end{lem}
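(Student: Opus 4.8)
The plan is to work directly with the conditions $f_p(a)=0$ and $f_p'(a)=0$ defining a repeated root and squeeze out that $a$ is a primitive cube root of unity. First I would record $f_p'(x)=p\bigl((1+x)^{p-1}-x^{p-1}\bigr)$ and note that $f_p'(0)=p\neq 0$ and, since $p$ is odd, $f_p'(-1)=p\bigl(0-(-1)^{p-1}\bigr)=-p\neq 0$; hence a repeated root $a$ satisfies $a\notin\{0,-1\}$. From $f_p'(a)=0$ we get $(1+a)^{p-1}=a^{p-1}$, and from $f_p(a)=0$ we get $(1+a)^p=a^p+1$. Multiplying the first identity by $(1+a)$ and substituting the second gives $a^p+1=(1+a)a^{p-1}=a^{p-1}+a^p$, so $a^{p-1}=1$; plugging this back into the first identity yields $(1+a)^{p-1}=1$ as well.

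Thus both $a$ and $1+a$ lie on the unit circle. I would then apply the standard conjugation argument: $|a|=1$ gives $\overline{a}=a^{-1}$ and $|1+a|=1$ gives $\overline{1+a}=(1+a)^{-1}$, so taking complex conjugates in the trivial identity $1+a=1+a$ produces $1+a^{-1}=(1+a)^{-1}$. Clearing denominators gives $(1+a)^2=a$, that is, $a^2+a+1=0$, which is the first assertion.

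For the separability of $g_p$, I would use \cref{cor:factor_f_p}, which gives $f_p(x)=p\,x(x+1)(x^2+x+1)^m g_p(x)$ with $g_p(\zeta_3)\neq 0$. The simple-root computations above show $0$ and $-1$ are simple roots of $f_p$, and these are already accounted for by the factors $x$ and $x+1$, so $g_p(0)\neq 0$ and $g_p(-1)\neq 0$; together with $g_p(\zeta_3)\neq 0$ (and $g_p(\overline{\zeta_3})=\overline{g_p(\zeta_3)}\neq 0$ since $g_p$ has real coefficients), $g_p$ is coprime to $x(x+1)(x^2+x+1)$. Now if $g_p$ had a repeated root $a\in\mathbb{C}$, then $f_p(a)=0$, and since $\bigl(x(x+1)(x^2+x+1)^m\bigr)(a)\neq 0$ the product rule forces $f_p'(a)=0$ as well; by the first part $a^2+a+1=0$, i.e. $a\in\{\zeta_3,\overline{\zeta_3}\}$, contradicting $g_p(\zeta_3)\neq 0$. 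Hence $g_p$ has no repeated root and is separable.

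The computations are all elementary; the only delicate point is the conjugation step, which genuinely exploits that both $a$ and $1+a$ are roots of unity — this is where the hypothesis "repeated root" (as opposed to merely $f_p(a)=0$) is essential, and it is the heart of the argument.
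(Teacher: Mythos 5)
Your argument is correct and takes essentially the same route as the paper: from $f_p(a)=f_p'(a)=0$ you deduce that $a$ and $1+a$ are $(p-1)$-th roots of unity (equivalently, the paper's $\lambda=(a+1)/a$ and $\lambda-1=1/a$), conclude $a^2+a+1=0$, and finish the separability of $g_p$ via \cref{cor:factor_f_p} just as the paper does. The only difference is the endgame: where the paper represents $\lambda$ and $\lambda-1$ as explicit points on the unit circle and solves the resulting trigonometric equations, you use the conjugation identity $\overline{1+a}=1+\overline{a}=(1+a)^{-1}$ to get $(1+a)^2=a$ directly, a slicker treatment of the same unit-circle configuration.
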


\begin{proof}
    Because $a$ is a repeated root of $f_p(x)$, we have $f_p(a)=f_p'(a)=0.$ We have 
    \[ 0 = f_p'(a) = p \left((a+1)^{p-1}-a^{p-1} \right)=0.\]
    There exists $\lambda \in \C$ such that $\lambda^{p-1}=1$ and $a+1= \lambda a.$ We then see that $a = \frac{1}{\lambda-1}.$ Substituting this into the equation $f_p(a)=0$, we have 
    \[  \left(\frac{\lambda}{\lambda-1} \right)^p - \left(\frac{1}{\lambda-1} \right)^p-1 =0.\]
    Using the fact that $\lambda^{p-1}=1$, we see that 
    \[ 0 = \lambda^p - (\lambda-1)^p -1 = \lambda - (\lambda-1)^p-1 = (\lambda-1)(1-(\lambda-1)^{p-1}).\]
    We conclude that $(\lambda-1)^{p-1}= \lambda^{p-1}=1.$ We can find $0 \leq k, l \leq p-1$ such that
    \[ \lambda = e^{\frac{2 k}{p-1}i} = \cos\left(\frac{2k}{p-1} \right)+i \sin \left(\frac{2k}{p-1} \right),\]
    \[ \lambda -1 = e^{\frac{2 l}{p-1}i} = \cos\left(\frac{2l}{p-1} \right)+i \sin \left(\frac{2l}{p-1} \right). \]
    We conclude that 
    \[\sin \left(\frac{2k}{p-1} \right) = \sin \left(\frac{2l}{p-1} \right), \cos\left(\frac{2k}{p-1} \right) = 1+\cos\left(\frac{2l}{p-1} \right). \]
    From this, we can see that 
\[\cos\left(\frac{2k}{p-1} \right) = \frac{1}{2}, \cos\left(\frac{2l}{p-1} \right)= -\frac{1}{2}. \]
We conclude either $\lambda = \frac{1 + \sqrt{3}i}{2}$ or $\lambda = \frac{1-\sqrt{3}i}{2}.$ We then have $a = \pm\zeta_3$; i.e, $a^2+a+1=0.$
By \cref{cor:factor_f_p}, $g_p(a)\neq 0$.  We conclude that $g_p(x)$ has no repeated roots. 
\end{proof}
We now show that there is an analogous statement over $\F_p[x].$
\begin{prop} \label{prop:factor_mod_p}
    Let $h_p(x)=\dfrac{1}{p}f_p(x) = (x^2+x+1)^m g_p(x)\in \Z[x]$ where $m$ is given in \cref{lem:phi_3}. The following conditions are equivalent. 
    \begin{enumerate}
        \item   $a \in \bar{\F}_p$ is a repeated root of $h_p(x)$.
        \item  $a$ is a root of $h_p(x)$ and $ a \in \F_p\setminus\{0,-1\}$. 
    \end{enumerate} 
    Furthermore, in this case, the multiplicity of $a$ is exactly $2.$
\end{prop}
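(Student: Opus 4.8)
The plan is to reduce the whole statement to an analysis of the derivative $h_p'(x)$ modulo $p$, which I claim is separable with an explicitly describable set of roots. Everything else then follows from the standard criterion for repeated roots.

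First I would compute the derivative over $\Z$. Since $f_p(x)=(1+x)^p-x^p-1$, we have $f_p'(x)=p\big[(1+x)^{p-1}-x^{p-1}\big]$, so $h_p'(x)=\tfrac{1}{p}f_p'(x)=(1+x)^{p-1}-x^{p-1}$, which again lies in $\Z[x]$. Reducing modulo $p$ and using $(1+x)^p\equiv 1+x^p$, I would rewrite this in $\F_p[x]$ as
\[ h_p'(x)\equiv (1+x)^{p-1}-x^{p-1}=\frac{1+x^p}{1+x}-x^{p-1}=\frac{1-x^{p-1}}{1+x}. \]
Because $p$ is odd, $-1\in\F_p^\times$ and $1-x^{p-1}=-\prod_{a\in\F_p^\times}(x-a)$, so that in $\F_p[x]$
\[ h_p'(x)\equiv -\!\!\prod_{a\in\F_p^\times\setminus\{-1\}}(x-a). \]
In particular $h_p'\bmod p$ has degree $p-2$, is separable, and its set of roots in $\bar{\F}_p$ is exactly $\F_p\setminus\{0,-1\}$. (Note also that $h_p\bmod p$ is monic of degree $p-1$, so ``repeated root'' makes sense.)

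Next I would invoke the fact that, since reduction modulo $p$ commutes with formal differentiation, $a\in\bar{\F}_p$ is a repeated root of $h_p\bmod p$ if and only if it is a common root of $h_p\bmod p$ and $h_p'\bmod p$. Combining this with the factorization of $h_p'$ above yields both implications at once: if $a$ is a repeated root of $h_p$, then $a$ is a root of $h_p'\bmod p$, hence $a\in\F_p\setminus\{0,-1\}$, which is $(1)\Rightarrow(2)$; conversely, if $a\in\F_p\setminus\{0,-1\}$ is a root of $h_p\bmod p$, then the displayed factorization shows $a$ is also a root of $h_p'\bmod p$, so $a$ is a repeated root, which is $(2)\Rightarrow(1)$. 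For the final assertion on multiplicity, the key point is that $\deg h_p=p-1<p$, so no root of $h_p\bmod p$ can have multiplicity divisible by $p$; hence if $a$ is a repeated root of multiplicity $\mu\ge 2$, writing $h_p=(x-a)^\mu q(x)$ with $q(a)\neq 0$ gives $h_p'=(x-a)^{\mu-1}\big(\mu q(x)+(x-a)q'(x)\big)$ with $\mu q(a)\neq 0$ (as $p\nmid\mu$), so $a$ is a root of $h_p'\bmod p$ of multiplicity exactly $\mu-1$; since every root of $h_p'\bmod p$ is simple, $\mu-1=1$, i.e.\ $\mu=2$.

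\textbf{Main obstacle.} The argument has no real obstacle: the one place demanding care is the multiplicity bookkeeping, where it is essential to use $\deg h_p<p$ so that the characteristic-$p$ pathology (a root's multiplicity dropping by more than one under differentiation) cannot occur. Everything else is the short explicit reduction of $h_p'$ modulo $p$ carried out above.
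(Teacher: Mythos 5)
Your proof is correct, and it shares the paper's central object -- the derivative $h_p'(x)=(x+1)^{p-1}-x^{p-1}$ -- but executes two steps differently. For identifying the roots of $h_p'$ in $\bar{\F}_p$, the paper argues pointwise: Fermat's little theorem gives $h_p'(a)=0$ for $a\in\F_p\setminus\{0,-1\}$ in one direction, and in the other it sets $\lambda=(a+1)/a$, deduces $\lambda^{p-1}=1$ hence $\lambda\in\F_p$, and recovers $a=1/(\lambda-1)\in\F_p$. You instead factor $h_p'$ completely in $\F_p[x]$ as $-\prod_{a\in\F_p^\times\setminus\{-1\}}(x-a)$ via $x^{p-1}-1=\prod_{a\in\F_p^\times}(x-a)$, which yields both directions at once and, as a bonus, shows $h_p'$ is separable. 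For the multiplicity claim, the paper computes $h_p''(a)=-(p-1)/(a(a+1))\neq 0$, which bounds the multiplicity by $2$; you instead combine the separability of $h_p'$ with the observation $\deg h_p=p-1<p$ (so the multiplicity is prime to $p$ and drops by exactly one under differentiation). Both routes are valid; yours packages the root-set and multiplicity statements into a single structural fact about $h_p'$, at the cost of the slightly more delicate bookkeeping about multiplicities under differentiation in characteristic $p$, which you handle correctly.
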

\begin{proof} Clearly $h'_p(x)=(x+1)^{p-1}-x^{p-1}$.
Suppose that $h_p$ has a root $a\in \F_p\setminus\{0,-1\}$. Then $h'_p(a)=(a+1)^{p-1}-a^{p-1}=1-1=0$.

Now we suppose that $h_p$ has a multiple root $a\in \bar{\F}_p$. From $0=h'_p(a)=(a+1)^{p-1}-a^{p-1}$, we see that $a\not=0$, $a\not=-1$ and $\left(\dfrac{a+1}{a}\right)^{p-1}=1$. Hence $\lambda:=\dfrac{a+1}{a}\in \F_p$ and thus $a=\dfrac{1}{\lambda-1}\in \F_p$. 

Finally, we note that 
\[ h_p''(a) = (p-1) \left((a+1)^{p-2} - a^{p-2}) \right) = (p-1) \left(\frac{1}{a+1}-\frac{1}{a} \right) = -\frac{p-1}{a(a+1)} \neq 0. \] 
Therefore, the multiplicity of $a$ is $2.$
\end{proof}

\begin{prop} \label{prop:p_1_mod_3}
    Suppose that $p \equiv 1 \pmod{3}$. Then $K_3$ is an induced subgraph of $G_R(p).$
\end{prop}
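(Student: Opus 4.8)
The plan is to exhibit an explicit root of $f_p$ inside $R^{\times}$ whose successor is again a unit, and then invoke \cref{prop:root_f_p}. The natural candidate is a primitive cube root of unity; the hypothesis $p\equiv 1\pmod 3$ is exactly what guarantees that such a root exists in $R$ (recall we are in the connected case, so $M=pR$ and $p\ge 3$, in fact $p\ge 7$).

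First I would locate a cube root of unity in the residue field. Since $k=R/M$ is a finite field of characteristic $p$, we have $|k|=p^{s}$ for some $s\geq 1$, and $3\mid p-1\mid p^{s}-1=|k^{\times}|$, so the cyclic group $k^{\times}$ contains an element $\bar\zeta$ of order $3$. Then $\bar\zeta$ is a root of $\Phi_3(x)=x^{2}+x+1$, and the two roots $\bar\zeta,\bar\zeta^{2}$ of $\Phi_3$ in $k$ are distinct (neither equals $1$, and $\bar\zeta=\bar\zeta^{2}$ would force $\bar\zeta=1$), so $\bar\zeta$ is a \emph{simple} root of $\Phi_3$ over $k$. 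As $R$ is a finite, hence henselian, local ring, Hensel's lemma (applied exactly as in the proof of \cref{pro:l_p_field}) lifts $\bar\zeta$ to a root $\zeta\in R$ of $x^{2}+x+1$.

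Next I would verify the two unit conditions and that $\zeta$ annihilates $f_p$. From $\zeta^{2}+\zeta+1=0$ we get $(\zeta-1)(\zeta^{2}+\zeta+1)=\zeta^{3}-1=0$, so $\zeta^{3}=1$ and hence $\zeta\in R^{\times}$; similarly $\zeta+1=-\zeta^{2}\in R^{\times}$. Finally, $\Phi_3(x)=x^{2}+x+1$ is the minimal polynomial over $\mathbb{Q}$ of a primitive cube root of unity $\zeta_3$, and the computation in the proof of \cref{lem:phi_3} shows $f_p(\zeta_3)=0$; equivalently $\Phi_3(x)\mid f_p(x)$ in $\Z[x]$, which is also visible from the factorization in \cref{cor:factor_f_p}. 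Hence $f_p(\zeta)=0$ in $R$, and applying \cref{prop:root_f_p} with $a=\zeta$ shows that $K_3$ is an induced subgraph of $G_R(p)$.

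I do not expect a genuine obstacle here: the only delicate point is the Hensel lifting, which needs $\Phi_3$ to be separable modulo $p$ — automatic since $p\neq 3$ — together with the routine checks that the lift $\zeta$ and $\zeta+1$ are units. An alternative, essentially equivalent, phrasing would avoid naming $\bar\zeta$: reduce the known factorization $f_p(x)=p\,x(x+1)(x^{2}+x+1)^{2}g_p(x)$ of \cref{cor:factor_f_p} modulo $M=pR$, observe via the order argument above that $x^{2}+x+1$ has a (simple) root in $k$, and lift that root to a root $\zeta\in R$ of $x^{2}+x+1\mid f_p$, obtaining the same $\zeta$.
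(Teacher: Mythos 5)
Your proposal is correct and follows essentially the same route as the paper's own proof: find a primitive cube root of unity in the residue field (using $3\mid p-1$), Hensel-lift it to a root $a$ of $x^2+x+1$ in $R$, note $a$ and $a+1$ are units and $f_p(a)=0$ via \cref{lem:phi_3}, and conclude by \cref{prop:root_f_p}. The extra checks you include (separability of $\Phi_3$ mod $p$, explicit unit verification) are fine but not a different argument.
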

\begin{proof}
    Let $k=R/pR$, the residue field of $R.$  Since $p \equiv 1 \pmod{3}$, $k^{\times}$ is a cyclic group of order divisible by $3.$ Consequently, we can find $a_0 \in k^{\times}$ such that $a_0 \neq 1$ and $a_0^3=1.$ In other words, $a_0^2+a_0+1=0.$ By Hensel's lemma, we can lift $a_0$ to a root $a \in R$; namely $a^2+a+1=0$ and $\bar{a}=a_0$ where $\bar{x}$ is the projection of $x$ to $k.$ We remark that since $1+\bar{a}=1+a_0 \neq 0$, $1+a \in R^{\times}.$ Furthermore, %by \cref{lem:phi_3} we have     \[ 1-(-a)^p = (1+a)^p. \]
    %This implies that
    by \cref{lem:phi_3} we have $f_p(a)=0$. By \cref{prop:root_f_p}, we conclude that the induced graph on $\{0, 1, -a^p \}$ is $K_3.$
\end{proof}
\begin{rem}
    The converse of \cref{prop:p_1_mod_3} is not true. The smallest counterexample is $p=59.$ In this case, we can check that $a=4$ is a solution of $g_p(x)=0$ over $\Z/p$ and hence of $f_p(x)=0$ over $\Z/p^2.$ For prime $p<500$, $g_p(x)$ has a root in $\F_p$ if and only if 
    $p \in \{59, 79, 83, 179, 193, 227, 337, 419, 421, 443, 457 \}.$ It seems that there exists infinitely many $p$ such that $g_p(x)$ has a root in $\F_p.$ 
\end{rem}

\section{The general case} \label{sec:general_case}
In \cref{sec:local_ring_l_p} and \cref{sec:local_ring_p_p}, we provide complete answers for \cref{question:connected} and \cref{question:prime}. In this section, we study these questions in the general case; namely 
\begin{equation} \label{eq:decomposition} R = \prod_{i=1}^d R_i. 
\end{equation}
where $R_i$'s are finite local rings whose maximal ideals are $M_i$'s.  For the rest of this section, we will fix the decomposition given in \cref{eq:decomposition}. With this decomposition, we have an isomorphism 
\[ G_R(p) \cong G_{R_1}(p) \times G_{R_2}(p) \times \cdots \times G_{R_d}(p).\]

We first deal with \cref{question:connected}. By Weichsel’s Theorem (see \cite[Theorem 5.9 and Corollary 5.10]{hammack2011handbook}, we have the following. 

\begin{prop} \label{thm:weichsel}
    $G_R(p)$ is connected if and only if the following conditions hold 
    \begin{enumerate}
        \item $G_{R_i}(p)$ is connected for each $1 \leq i \leq d$,
        \item At most one of the $G_{R_i}(p)$ is a bipartite graph. 
    \end{enumerate}
\end{prop}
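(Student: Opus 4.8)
The plan is to invoke Weichsel's Theorem directly, since the statement is essentially a translation of that theorem into the present notation. First I would recall that, by the tensor product decomposition established just before the statement, $G_R(p) \cong G_{R_1}(p) \times \cdots \times G_{R_d}(p)$, so that $G_R(p)$ is the tensor product of finitely many graphs. Weichsel's Theorem (in the form cited, \cite[Theorem 5.9 and Corollary 5.10]{hammack2011handbook}) asserts precisely that a tensor product of finitely many graphs is connected if and only if each factor is connected and at most one factor is bipartite. Applying this with $\Gamma_i = G_{R_i}(p)$ immediately yields the two stated conditions.

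The one point that needs a word of care is that Weichsel's Theorem, as usually stated, concerns tensor products of \emph{two} graphs, and the ``at most one bipartite'' criterion for arbitrarily many factors requires a short induction. I would therefore include a brief inductive argument: writing $G_R(p) \cong \left( G_{R_1}(p) \times \cdots \times G_{R_{d-1}}(p) \right) \times G_{R_d}(p)$, apply the two-factor version of Weichsel's Theorem together with the elementary fact that a tensor product of two graphs is bipartite if and only if at least one of its factors is bipartite (and the observation that the product is connected, by induction, exactly when the first $d-1$ factors are connected with at most one bipartite among them). Combining ``the big factor is connected'' with ``at most one of the first $d-1$ is bipartite'' and ``the big factor and $G_{R_d}(p)$ are not both bipartite'' gives the claim. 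Alternatively one may simply cite \cite[Corollary 5.10]{hammack2011handbook} if it is stated there for finitely many factors, in which case no induction is needed.

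I do not expect any genuine obstacle here: the content is entirely in Weichsel's Theorem, which we are permitted to assume, and the proof amounts to bookkeeping of the bipartiteness condition across several factors. The main (very minor) thing to get right is the logical equivalence ``at most one of $d$ graphs is bipartite'' $\iff$ ``at most one of $d-1$ is bipartite, and the product of those $d-1$ together with the $d$-th are not both bipartite,'' which follows from the fact that a product of graphs is bipartite precisely when some factor is.

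\begin{proof}
    By the tensor product decomposition $G_R(p) \cong G_{R_1}(p) \times \cdots \times G_{R_d}(p)$, the graph $G_R(p)$ is a tensor product of finitely many graphs. By Weichsel's Theorem (\cite[Theorem 5.9 and Corollary 5.10]{hammack2011handbook}), such a tensor product is connected if and only if every factor is connected and at most one of the factors is bipartite. Applying this to the factors $G_{R_i}(p)$, $1 \leq i \leq d$, yields exactly the two stated conditions.
\end{proof}
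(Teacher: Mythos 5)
Your proposal is correct and matches the paper exactly: the paper offers no argument beyond citing Weichsel's Theorem in the multi-factor form from \cite[Theorem 5.9 and Corollary 5.10]{hammack2011handbook} applied to the decomposition $G_R(p) \cong \prod_{i=1}^d G_{R_i}(p)$. Your additional remark about reducing the $d$-factor case to the two-factor case by induction is a harmless (and reasonable) bit of extra care that the paper omits.
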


When $R$ is a local ring, via \cref{prop:l_p_connected} (for the case  $p \in R^{\times}$) and \cref{prop:local_connected} (for the case $p \not \in R^{\times}$) we provide the explicit conditions for $G_R(p)$ to be connected. In this case, we also provide some sufficient conditions for $G_R(p)$ to be non-bipartite (see \cref{cor:contains_cycle} and \cref{cor:p_p_bipartite}). For the general case, we have the following observation. 

\begin{prop}
    Suppose that $G_1, G_2$ are graphs.% and $d \geq 2$.
    Then the directed product $G_1 \times G_2$ is always anti-connected. 
\end{prop}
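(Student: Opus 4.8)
The plan is to prove that the complement $(G_1 \times G_2)^c$ is connected by exhibiting, for any two vertices, a path between them in the complement. Recall that $(g,h)$ and $(g',h')$ are \emph{non-adjacent} in $G_1 \times G_2$ precisely when either $g = g'$, or $h = h'$, or $(g,g') \notin E(G_1)$, or $(h,h') \notin E(G_2)$. In particular, any two vertices sharing a first coordinate are non-adjacent, and any two vertices sharing a second coordinate are non-adjacent. So the key observation is that for a fixed $g$, the ``column'' $\{g\} \times V(G_2)$ induces a complete graph in the complement, and likewise each ``row'' $V(G_1) \times \{h\}$ induces a complete graph in the complement.

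First I would take arbitrary vertices $(g,h)$ and $(g',h')$. If $g = g'$ or $h = h'$ they are already adjacent in the complement, so assume $g \neq g'$ and $h \neq h'$. Then I would use the intermediate vertex $(g, h')$: it shares its first coordinate with $(g,h)$, hence $(g,h)$ and $(g,h')$ are non-adjacent in $G_1 \times G_2$, i.e.\ adjacent in the complement; and it shares its second coordinate with $(g',h')$, hence $(g,h')$ and $(g',h')$ are adjacent in the complement. Thus $(g,h) - (g,h') - (g',h')$ is a path of length at most $2$ in $(G_1 \times G_2)^c$ connecting the two given vertices. Since the two vertices were arbitrary, $(G_1 \times G_2)^c$ is connected, which is exactly the statement that $G_1 \times G_2$ is anti-connected.

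There is essentially no obstacle here; the only point worth stating carefully is the degenerate case where one of $V(G_1)$, $V(G_2)$ is empty or a single vertex, in which $G_1 \times G_2$ has at most one vertex per row/column and the claim is vacuous or trivial. (One should perhaps implicitly assume $|V(G_1)|, |V(G_2)| \geq 1$, which is the case for all $G_{R_i}(p)$.) I would remark that the same argument shows more: the diameter of the complement is at most $2$. The main conceptual content is simply recognizing that the tensor-product non-adjacency condition is so permissive that rows and columns become cliques in the complement, and any two vertices lie on a common row or column with a suitable pivot.
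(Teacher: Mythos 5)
Your proof is correct and follows essentially the same route as the paper: the paper also connects $(u_1,u_2)$ to $(v_1,v_2)$ in the complement through the pivot $(u_1,v_2)$, using the fact that vertices sharing a coordinate are non-adjacent in the tensor product. Your write-up is in fact slightly more careful, since you explicitly handle the cases $g=g'$ or $h=h'$ and note the diameter-two consequence.
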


\begin{proof}
    Let $(u_1, u_2)$ and $(v_1, v_2)$ be two vertices in $G_1 \times G_2.$ Then we have the following path in $(G_1 \times G_2)^c$
    \[ (u_1, u_2) \to (u_1, v_2) \to (u_2, v_2). \]
\end{proof}

\begin{cor} \label{cor:connected=>anti}
    Suppose that $d \geq 2$. Then $G_R(p)$ is anticonnected. 
\end{cor}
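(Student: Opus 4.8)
The plan is to deduce this immediately from the preceding proposition, which says that the tensor product of \emph{any} two graphs is anticonnected, combined with the tensor product decomposition of $G_R(p)$ recorded at the start of this section.

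Concretely, since $d \geq 2$, I would split the factorization $R = \prod_{i=1}^d R_i$ into two nonempty blocks, say $R \cong R_1 \times R'$ where $R' := \prod_{i=2}^d R_i$. Using the isomorphism $G_R(p) \cong \prod_{i=1}^d G_{R_i}(p)$ together with the associativity of the tensor product of graphs (standard, see \cite{hammack2011handbook}), I get
\[ G_R(p) \cong G_{R_1}(p) \times \Big( \prod_{i=2}^d G_{R_i}(p) \Big) \cong G_{R_1}(p) \times G_{R'}(p). \]
Thus $G_R(p)$ is expressed as the tensor product of the two graphs $G_{R_1}(p)$ and $G_{R'}(p)$, and the preceding proposition applies verbatim to give that $(G_R(p))^c$ is connected, i.e. $G_R(p)$ is anticonnected.

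There is essentially no obstacle here: the only point one might want to spell out is that the tensor product is associative up to isomorphism (so that grouping the $d$ factors into two blocks is legitimate) and that $\prod_{i=2}^d R_i$ is again a finite commutative ring with $-1 \in ((R')^\times)^p$, so that $G_{R'}(p)$ is a well-defined graph of the same type; both are routine. If one prefers to avoid invoking associativity, an equally short alternative is to repeat the explicit argument of the preceding proposition directly in coordinates: given two vertices $(a_1,\dots,a_d)$ and $(b_1,\dots,b_d)$ of $G_R(p)$, exhibit the two-step path $(a_1,a_2,\dots,a_d) \to (b_1,a_2,\dots,a_d) \to (b_1,b_2,\dots,b_d)$ and check that consecutive vertices differ in a coordinate set where at least one entry is unchanged, hence are non-adjacent in $G_R(p)$, so this is a path in $(G_R(p))^c$.
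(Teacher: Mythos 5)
Your proposal is correct and follows essentially the same route as the paper: the paper derives the corollary directly from the preceding proposition on anticonnectedness of a tensor product of two graphs, applied to the decomposition $G_R(p) \cong \prod_{i=1}^d G_{R_i}(p)$ with the factors grouped into two blocks, which is exactly what you do (and your explicit two-step path in coordinates is just the proposition's argument written out). No gaps worth noting beyond the trivial degenerate case where the intermediate vertex coincides with an endpoint, which is harmless.
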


We now return to \cref{question:prime}. Under the decomposition $R = \prod_{i=1}^d R_i$, we know that each ideal $I$ is $R$ is of the form 
\[ I = I_1 \times I_2 \times \cdots \times I_d, \]
where $I_i$ is an ideal in $R_i$ for each $1 \leq i \leq d.$ We have the following lemma. 
\begin{lem} \label{lem:I_homo}
    Suppose that $I = I_1 \times I_2 \times \cdots \times I_d $ is a proper ideal in $R$; i.e., $I \neq R.$ Then $I$ is a homogeneous set in $R$ if and only if the following conditions hold
    \begin{enumerate}
        \item $I_i$ is a homogeneous set in $R_i$ for each $1 \leq i \leq d$, 
        \item $I_i \neq R_i$ for each $1 \leq i \leq d.$
        \end{enumerate}
\end{lem}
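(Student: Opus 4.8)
The plan is to characterize homogeneity of a product ideal $I = I_1 \times \cdots \times I_d$ directly via the criterion of \cite[Proposition 4.4]{chudnovsky2024prime}, which says that for a (proper, nonzero) ideal $I$ of $R$, $I$ is a homogeneous set in ${\rm Cay}(R,(R^{\times})^p)$ if and only if $I + (R^{\times})^p \subseteq (R^{\times})^p$. So the first step is to translate both sides of this containment through the decomposition $R = \prod_i R_i$. Here the key observation is that the set of invertible $p$-th powers of a product is the product of the sets of invertible $p$-th powers: $(R^{\times})^p = (R_1^{\times})^p \times \cdots \times (R_d^{\times})^p$, since units of a product are tuples of units and $p$-th powers are computed coordinatewise. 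Likewise $I + (R^{\times})^p = (I_1 + (R_1^{\times})^p) \times \cdots \times (I_d + (R_d^{\times})^p)$.

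With this reformulation, the containment $I + (R^{\times})^p \subseteq (R^{\times})^p$ becomes the conjunction over $i$ of the containments $I_i + (R_i^{\times})^p \subseteq (R_i^{\times})^p$, \emph{provided} that neither side is empty in any coordinate; since each $(R_i^{\times})^p$ is nonempty (it contains $1$), a product of sets is contained in another product of nonempty sets iff each factor is contained. This gives condition (1): $I_i$ is homogeneous in $R_i$ for each $i$. The one subtlety is the degenerate case $I_i = R_i$ for some $i$: then $I_i + (R_i^{\times})^p = R_i$, which is \emph{not} contained in $(R_i^{\times})^p$ (the latter misses, e.g., $0$, and also misses the non-units), so the product containment fails; this forces condition (2), $I_i \neq R_i$ for all $i$. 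Conversely, if both (1) and (2) hold, then each coordinate containment holds and hence the product containment holds, so $I$ is homogeneous. One should also note that $I$ proper already rules out $I = R$, but not the possibility that some individual $I_i = R_i$ while another $I_j = 0$, which is exactly why (2) must be stated separately.

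The forward and reverse directions are then essentially symmetric once the coordinatewise description of $(R^{\times})^p$ and of $I + (R^{\times})^p$ is in hand. I would also take care to invoke \cite[Proposition 4.4]{chudnovsky2024prime} in the form that applies to each local factor $R_i$ as well, so that "$I_i$ is a homogeneous set in $R_i$" can be used interchangeably with "$I_i + (R_i^{\times})^p \subseteq (R_i^{\times})^p$".

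I do not expect a serious obstacle here: the proof is a bookkeeping argument about products of sets. The only point requiring genuine care is the handling of condition (2) — making sure the statement correctly excludes the case where some factor equals the whole ring $R_i$, which is where a product of sets can fail to be contained in a product of proper subsets even when every other factor behaves well. Writing out the chain $I + (R^{\times})^p = \prod_i \bigl(I_i + (R_i^{\times})^p\bigr) \subseteq \prod_i (R_i^{\times})^p = (R^{\times})^p$ and checking when each inclusion of factors can or cannot hold will constitute the bulk of the (short) argument.
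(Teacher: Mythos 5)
Your proposal is correct and follows essentially the same route as the paper: both reduce homogeneity to the containment $I + (R^{\times})^p \subseteq (R^{\times})^p$ via \cite[Proposition 4.4]{chudnovsky2024prime} and then check it coordinatewise under the decomposition $R = \prod_i R_i$. Your write-up is actually a bit more careful than the paper's (which is quite terse) in spelling out why the degenerate case $I_i = R_i$ breaks the coordinate containment and hence why condition (2) must be stated separately.
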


\begin{proof}
    This statement follows from the fact that $I$ is a homogeneous set in $R$ if and only if 
\[ I + (R^{\times})^p \subseteq (R^{\times})^p. \]
Under the decomposition given in \cref{eq:decomposition} this is equivalent to 
\[ I_i + (R_i^{\times})^p \subseteq (R^{\times})^p, \forall 1 \leq i \leq d. \]
This condition is equivalent to the conditions $(1)+(2)$ above. 
\end{proof}

\begin{thm} \label{thm:prime_general_case}
   Suppose that $d \geq 2$.  $G_{R}(p)$ is prime if and only if the following conditions hold 
\begin{enumerate}
    \item $G_{R_i}(p)$ is a connected graph for each $1 \leq i \leq d$,
    \item $G_{R}(p)$ is connected,
    \item If $p$ is invertible in $R_i$ then $R_i$ is a field, 
    \item If $p$ is not invertible in $R_i$ then $M_i=pR_i$ and $p^2 R_i=0$. Here $M_i$ is the maximal ideal in $R_i.$
\end{enumerate}
\end{thm}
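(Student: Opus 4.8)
The plan is to combine Proposition~\ref{prop:prime_old} (which says that a non-prime, connected, anticonnected Cayley graph ${\rm Cay}(R,S)$ admits a non-trivial homogeneous ideal contained in the Jacobson radical) with the structural results already proved for the local factors, using Lemma~\ref{lem:I_homo} to translate homogeneity of an ideal of $R$ into homogeneity of its components. Note first that since $d\geq 2$, Corollary~\ref{cor:connected=>anti} guarantees $G_R(p)$ is automatically anticonnected, so primality is equivalent to being connected and having no non-trivial homogeneous set.

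For the forward direction, assume $G_R(p)$ is prime. Then it is connected, giving condition~(2), and by Proposition~\ref{thm:weichsel} each $G_{R_i}(p)$ is connected, giving condition~(1). For condition~(3): if $p$ is invertible in $R_i$ but $R_i$ is not a field, then $M_i$ is a non-zero proper ideal of $R_i$ which by Proposition~\ref{pro:l_p_field} is a homogeneous set in $G_{R_i}(p)$; by Lemma~\ref{lem:I_homo} the ideal $R_1\times\cdots\times M_i\times\cdots\times R_d$ is then a non-trivial homogeneous set in $G_R(p)$ (it is proper since $M_i\neq R_i$, and it has at least $2$ elements, and it is not all of $V(G_R(p))$), contradicting primality. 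For condition~(4): if $p$ is not invertible in $R_i$, then since $G_{R_i}(p)$ is connected, Proposition~\ref{prop:local_connected} forces $M_i=pR_i$; and by Proposition~\ref{prop:p^2}, $p^2R_i$ is a homogeneous set in $G_{R_i}(p)$, so if $p^2R_i\neq 0$ then Lemma~\ref{lem:I_homo} again produces a non-trivial homogeneous ideal of $R$, a contradiction. Thus all four conditions hold.

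For the converse, assume conditions~(1)--(4) and suppose for contradiction that $G_R(p)$ is not prime. By condition~(2) it is connected, and by Corollary~\ref{cor:connected=>anti} it is anticonnected, so Proposition~\ref{prop:prime_old} yields a non-trivial homogeneous ideal $I=I_1\times\cdots\times I_d$ of $R$. By Lemma~\ref{lem:I_homo}, each $I_i$ is a homogeneous set in $R_i$ with $I_i\neq R_i$. I claim each $I_i=0$. If $p$ is invertible in $R_i$, then by condition~(3) $R_i$ is a field, whose only proper ideal is $0$, so $I_i=0$. If $p$ is not invertible in $R_i$, then $G_{R_i}(p)$ is connected (condition~(1)), and $I_i$ is a homogeneous proper ideal, so Corollary~\ref{cor:p^2_max} gives $I_i\subseteq p^2R_i=0$ by condition~(4); hence $I_i=0$. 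Therefore $I=0$, contradicting that $I$ is non-trivial, and $G_R(p)$ is prime.

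The main obstacle — really the only point requiring care — is verifying that the homogeneous ideals constructed in the forward direction are genuinely \emph{non-trivial} as vertex subsets, i.e.\ that $2\leq |I|<|V(G_R(p))|$: one must check the relevant component ideal is non-zero (which holds precisely under the negation of the condition being proved) and that, because $I_j=R_j$ is allowed for $j\neq i$ in Lemma~\ref{lem:I_homo}, the set $I$ is still a proper subset of $R$ (guaranteed by $I_i\neq R_i$) and has size at least $2$ (guaranteed since $|M_i|\geq 2$ or $|p^2R_i|\geq 2$ in the respective cases, as $R_i$ is finite with a nonzero such ideal). Everything else is a direct citation of the already-established local classification together with Weichsel's theorem and Lemma~\ref{lem:I_homo}.
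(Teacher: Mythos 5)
Your converse direction is correct and coincides with the paper's argument. The forward direction, however, contains a genuine error at exactly the point you flagged as "the only point requiring care." You claim that $R_1\times\cdots\times M_i\times\cdots\times R_d$ is a homogeneous set "because $I_j=R_j$ is allowed for $j\neq i$ in Lemma~\ref{lem:I_homo}." It is not allowed: condition (2) of Lemma~\ref{lem:I_homo} requires $I_j\neq R_j$ for \emph{every} $j$, and this is not a cosmetic hypothesis. Indeed, the set $X=M_1\times R_2\times\cdots\times R_d$ fails to be homogeneous whenever $d\geq 2$: pick $v=(v_1,\dots,v_d)$ with $v_1\in (R_1^{\times})^p$, so $v\notin X$. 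Then $v$ is adjacent to $(0,\,v_2-s_2,\dots,v_d-s_d)\in X$ for any choice of $s_j\in(R_j^{\times})^p$, but $v$ is not adjacent to $(0,v_2,v_3-s_3,\dots)\in X$ since $v_2-v_2=0\notin(R_2^{\times})^p$. So $v$ sees some but not all of $X$.

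The repair is short and brings you back to the paper's proof: instead of padding the distinguished factor with full rings, pad it with zero ideals (or, as the paper does, take $J=\prod_i J_i$ with $J_i=M_i$ when $p\in R_i^{\times}$ and $J_i=p^2R_i$ otherwise, each of which is a \emph{proper} homogeneous ideal of $R_i$ by Proposition~\ref{pro:l_p_field} and Proposition~\ref{prop:p^2}). Then Lemma~\ref{lem:I_homo} genuinely applies, $J$ is a homogeneous ideal of $R$, and if any $J_i\neq 0$ then $2\leq |J|<|R|$, contradicting primality; hence every $J_i=0$, which is precisely conditions (3) and (4). With that substitution your argument is complete and is essentially the paper's.
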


\begin{proof}
    Suppose that $G_R(p)$ is prime. Then $(1), (2)$ hold. We note that if $p$ is not invertible in $R$ and $G_{R_i}(p)$ is connected, we must have $M_i =p R_i$ (by \cref{cor:connected}). For each $1 \leq i \leq d$, we define 

    \[
 J_i = 
\begin{cases}
    M_i & \text{if $p \in R_i^{\times}$}  \\
    p^2 M_i & \text{else.} 
\end{cases}
\]
Then $J = \prod_{i=1}^d J_i$ is a homogeneous set in $G_R(p)$. Because $G_R(p)$ is prime, we must have $J= 0$ or equivalently $J_i=0$ for each $1 \leq i \leq d.$

Conversely, suppose that all conditions are satisfied. We claim that $G_R(p)$ is prime. Suppose to the contrary that $G_R(p)$ is not prime. By \cref{cor:connected=>anti}, we know that $G_R(p)$ is also anticonnected. By \cite[Theorem 4.1]{chudnovsky2024prime}, there exists a non-zero proper ideal $I$ in $R$ such that $I$ is a homogeneous set in $R.$ Let us write $I= \prod_{i=1}^d I_i$. By \cref{lem:I_homo}, we know that each $I_i$ is a homogeneous set in $R_i.$ By the definition of $J_i$ together with \cref{cor:p^2_max}, we know that $I_i \subseteq J_i$ for each $1 \leq i \leq d.$ By our assumption, we must have $I_i = \{0 \}$ and hence $I = 0.$ This is a contradiction. 
\end{proof}

\begin{rem}
    We note that condition $(1)$ is described explicitly in \cref{prop:l_p_connected} (for the case $p \in R_i^{\times})$ and in \cref{prop:local_connected} (for the case $p \not \in R^{\times}.)$
\end{rem}
We provide some partial results where the conditions in \cref{thm:prime_general_case} can be described more explicitly. First, by Weichsel's theorem \cref{thm:weichsel} and the fact that $G_{R_i}(p)$ is not bipartite if $2  \in R_i^{\times}$ (see \cref{cor:contains_cycle}) we have the following. 
\begin{prop}
    Suppose that there exists at most one $i$ such that $2 \not \in R_i^{\times}.$ Then $G_R(p)$ is connected if and only $G_{R_i}(p)$ is connected for all $1 \leq i \leq d.$
\end{prop}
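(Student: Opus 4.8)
The forward implication is immediate: if $G_R(p)$ is connected then, by Weichsel's theorem (\cref{thm:weichsel}), each factor $G_{R_i}(p)$ is connected.

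For the converse, suppose every $G_{R_i}(p)$ is connected. By \cref{thm:weichsel} it suffices to show that at most one of the graphs $G_{R_i}(p)$ is bipartite. I claim that whenever $2 \in R_i^{\times}$, the graph $G_{R_i}(p)$ is not bipartite. Let $\ell_i$ be the residue characteristic of the finite local ring $R_i$; it is a prime, and since $2 \in R_i^{\times}$ we have $\ell_i \neq 2$, so $\ell_i$ is odd. Because $R_i$ is a finite local ring, its characteristic $c_i$ is a power of $\ell_i$ (a finite commutative ring whose characteristic had two distinct prime factors would contain a non-trivial idempotent via the Chinese Remainder Theorem, contradicting locality), hence $c_i$ is odd. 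Now consider the closed walk
\[ 0 \to 1 \to 2 \to \cdots \to c_i - 1 \to 0 \]
in $G_{R_i}(p)$: every consecutive difference is $\pm 1$, and $1 \in (R_i^{\times})^p$ while $-1 \in (R_i^{\times})^p$ by our standing hypothesis, so each step is an edge; since the $c_i$ vertices $0, 1, \dots, c_i - 1$ are pairwise distinct, this is a cycle of odd length $c_i$. Thus $G_{R_i}(p)$ contains an odd cycle and is not bipartite. This is exactly the argument already used in \cref{cor:contains_cycle} and \cref{cor:p_p_bipartite}.

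Finally, by hypothesis there is at most one index $i_0$ with $2 \notin R_{i_0}^{\times}$. For every other $i$ the claim gives that $G_{R_i}(p)$ is non-bipartite, so at most one of the $G_{R_i}(p)$ — namely possibly $G_{R_{i_0}}(p)$ — is bipartite. Combined with the assumption that all the $G_{R_i}(p)$ are connected, \cref{thm:weichsel} yields that $G_R(p)$ is connected. The only steps requiring care are the verification that the displayed closed walk is a genuine cycle (distinctness of its vertices and that each step is an edge) and that $c_i$ is odd; with these in place the argument is routine, so I do not expect a substantial obstacle.
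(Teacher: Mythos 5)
Your proof is correct and takes essentially the same approach as the paper: Weichsel's theorem (\cref{thm:weichsel}) combined with the fact that $G_{R_i}(p)$ is not bipartite whenever $2 \in R_i^{\times}$. The only (minor, beneficial) difference is that you exhibit the odd cycle $0 \to 1 \to \cdots \to c_i-1 \to 0$ of length $\mathrm{char}(R_i)$ directly, using only $\pm 1 \in (R_i^{\times})^p$, which handles non-field local rings and the case $p \notin R_i^{\times}$ uniformly, whereas the paper simply cites \cref{cor:contains_cycle}, which is stated only for fields of characteristic $\ell \neq p$.
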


When $p=2$, we have the following. 
\begin{thm}
   Suppose that $d \geq 2.$ Then $G_R(2)$ is prime if and only if the following conditions hold. 
\begin{enumerate}
\item There exists at most one $1 \leq i \leq d$ such that $R_i = \F_2$.
    \item If $2$ is not invertible in $R_i$ then $R_i$ is a field of characteristics $2.$
    \item If $2$ is invertible in $R_i$ then $R_i$ is a field of characteristics $\ell \neq 2.$
\end{enumerate}
    
\end{thm}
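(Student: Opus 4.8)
The statement is the specialization of \cref{thm:prime_general_case} to $p=2$, so the plan is to run through the four conditions of that theorem and rewrite each one using what we know when $p=2$. The key simplifying facts are: (i) when $2$ is invertible in a local ring $R_i$, condition (3) of \cref{thm:prime_general_case} says $R_i$ must be a field, and conversely \cref{cor:contains_cycle} (with $\ell$ odd, which holds automatically since $\ell = \operatorname{char}(R_i/M_i) \neq 2$) tells us such a field is non-bipartite; (ii) when $2$ is \emph{not} invertible in $R_i$, i.e. $R_i$ has residue characteristic $2$, \cref{prop:local_connected} in the case $p=2$ states that $G_{R_i}(2)$ is connected iff $M_i = 2R_i = 0$ iff $R_i$ is a field of characteristic $2$ iff $G_{R_i}(2) \cong K_{|R_i|}$; in particular condition (4) of \cref{thm:prime_general_case} (asking $M_i = pR_i$ and $p^2 R_i = 0$) becomes automatic once $G_{R_i}(2)$ is connected, and it forces $R_i$ to be a field of characteristic $2$.

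First I would invoke \cref{thm:prime_general_case}: since $d \geq 2$, $G_R(2)$ is prime iff conditions (1)--(4) there hold. I rewrite (3): ``if $2 \in R_i^\times$ then $R_i$ is a field'' — and since $R_i$ is local with residue characteristic $\ell \neq 2$, being a field means $R_i = \F_{\ell^m}$; this is exactly condition (3) of the present theorem (a finite field of characteristic $\ell \neq 2$). Next I rewrite (4) together with the connectedness requirement: if $2 \notin R_i^\times$, then $G_{R_i}(2)$ connected (part of condition (1) of \cref{thm:prime_general_case}) forces, by \cref{prop:local_connected} with $p=2$, that $R_i$ is a field of characteristic $2$; conversely a field of characteristic $2$ satisfies $M_i = 2R_i = 0$ and $2^2 R_i = 0$, so (4) holds and $G_{R_i}(2) = K_{|R_i|}$ is connected. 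This is condition (2) of the present theorem.

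It then remains to analyze conditions (1) and (2) of \cref{thm:prime_general_case} — namely each $G_{R_i}(2)$ is connected, and $G_R(2)$ is connected — under the structural constraints just derived (every $R_i$ is a finite field). By \cref{prop:l_p_connected} and \cref{cor:l>p} / \cref{prop:l_p_anti}, when $R_i = \F_{\ell^m}$ with $\ell$ odd, $G_{R_i}(2)$ is connected precisely when $2 \mid \ell^m - 1$ and $\frac{\ell^m-1}{2} \dag \ell^m - 1$ — but $\ell^m - 1$ is even and $\frac{\ell^m-1}{2} > \ell^{m-1} - 1 \geq \ell^a - 1$ for $a \leq m-1$, so this always holds; hence $G_{\F_{\ell^m}}(2)$ is always connected, and it equals $K_{\ell^m}$ or the Paley-type graph $G_{\F_{\ell^m}}(2)$, in either case connected. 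So condition (1) of \cref{thm:prime_general_case} imposes nothing new. For the global connectedness (condition (2)), I apply Weichsel's theorem \cref{thm:weichsel}: $G_R(2)$ is connected iff at most one $G_{R_i}(2)$ is bipartite. By \cref{cor:contains_cycle}, $G_{R_i}(2)$ for $R_i$ of odd characteristic is never bipartite; and for $R_i$ of characteristic $2$ we have $G_{R_i}(2) = K_{|R_i|}$, which is bipartite iff $|R_i| = 2$, i.e. $R_i = \F_2$. Therefore ``at most one bipartite factor'' translates to ``at most one $i$ with $R_i = \F_2$'', which is condition (1) of the present theorem.

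\textbf{Main obstacle.} There is no deep obstacle here — the theorem is a corollary of \cref{thm:prime_general_case} plus the bipartiteness bookkeeping of \cref{thm:weichsel}, \cref{cor:contains_cycle}, and \cref{cor:p_p_bipartite}. The one point requiring a little care is making sure the connectedness condition (1) of \cref{thm:prime_general_case} is genuinely automatic in the $p=2$ setting (for odd residue characteristic via the primitive-divisor inequality of \cref{cor:l>p}, and for residue characteristic $2$ via \cref{prop:local_connected}), so that it does not contribute an extra hypothesis to the final statement; and to double-check that when $R_i = \F_2$ the factor $K_2$ is indeed bipartite and is the only obstruction to global connectedness, which is precisely what \cref{cor:p_p_bipartite} records.
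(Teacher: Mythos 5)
Your proposal is correct and follows essentially the same route as the paper: specialize \cref{thm:prime_general_case} to $p=2$, use \cref{prop:local_connected} to turn condition (4) into ``$R_i$ is a field of characteristic $2$,'' and handle global connectedness via Weichsel's theorem together with \cref{cor:contains_cycle} and the observation that $K_{|R_i|}$ is bipartite only for $|R_i|=2$. Your write-up is in fact more careful than the paper's (e.g., in checking that connectedness of each odd-characteristic factor is automatic via \cref{cor:l>p}), but the ingredients and structure are the same.
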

\begin{proof}
   The necessary conditions follow from \cref{thm:prime_general_case}. We now show that they are sufficient as well. If $2$ is invertible in $R_i$ then $G_{R_i}(2)$ is connected and not bipartite by \cref{cor:connected} and \cref{cor:contains_cycle}. On the other hand, if $2$ is not invertible in $R_i$ then $G_{R_i}(2) \cong K_{|R_i|}$ by \cref{prop:local_connected}. In particular, it is bipartite if and only $|R_i|=2.$ We can then see that under the above conditions, all conditions mentioned in \cref{thm:prime_general_case} are satisfied. Therefore, $G_R(p)$ is prime. 
   \end{proof}

\section*{Acknowledgements}
We thank Dr. Ha Duy Hung for some correspondences around the arithmetics of the polynomial $f_p.$
\bibliographystyle{amsplain}
\bibliography{references.bib}
\end{document}